\documentclass[11pt, final]{amsart}

\textwidth=4.7in
\textheight=7.7in 
\usepackage{a4wide}
\usepackage{amsmath,amssymb,amsfonts,amsthm}
\usepackage{bm}
\usepackage{mathtools}
\usepackage{xcolor}
\usepackage[marginparwidth=2cm]{geometry} 
\usepackage{float}
\geometry{verbose,letterpaper,tmargin=.7in,bmargin=.7in,lmargin=.8in,rmargin=.8in}
\setlength\parskip{\smallskipamount}
\usepackage{xargs}
\usepackage[colorlinks,citecolor=blue,urlcolor=blue,bookmarks=false,hypertexnames=true]{hyperref}
\usepackage{ stmaryrd }
\usepackage{ esint }
\usepackage{mathrsfs}

\usepackage[maxbibnames=99, giveninits=true]{biblatex}
\addbibresource{biblio.bib}

\theoremstyle{plain}

\newtheorem{theorem}{Theorem}[section]

\newcounter{assumpA}
\renewcommand{\theassumpA}{A}
\newenvironment{assumptionA}[1][]%
{%
 ~\refstepcounter{assumpA}%
  \paragraph*{\textbf{Property
  \theassumpA\if\relax\detokenize{#1}\relax\else\ (#1)\fi}}%
}%
{\par}

\newcounter{assumpB}
\renewcommand{\theassumpB}{B}
\newenvironment{assumptionB}[1][]%
{%
 ~\refstepcounter{assumpB}%
  \paragraph*{\textbf{Property
  \theassumpB\if\relax\detokenize{#1}\relax\else\ (#1)\fi}}%
}%
{\par}

\newtheorem{lemma}[theorem]{Lemma}
\newtheorem{proposition}[theorem]{Proposition}

\theoremstyle{definition}
\begingroup
\newtheorem{remark}[theorem]{Remark}

\newtheorem{definition}[theorem]{Definition}
\endgroup

\numberwithin{equation}{section}

\newcommand{\R}{\mathbb{R}}

\newcommand{\Z}{\mathbb{Z}}
\newcommand{\N}{\mathbb{N}}
\renewcommand{\H}{\mathcal{H}}
\def\e{\varepsilon}
\newcommand{\ud}{\,\textnormal{d}}
\def\A{\mathcal A_Q}
\def\bd{\partial}

\def\M{\mathbb M}

\def\G{\mathcal G}

\title[Non-polyconvex $Q$-integrands with lower semicontinuous energies]{Non-polyconvex $Q$-integrands with \\ lower semicontinuous energies}
\author[D. De Gennaro]{Daniele De Gennaro}
\email{daniele.degennaro@unibocconi.it}
\author[A. De Rosa]{Antonio De Rosa}
\email{antonio.derosa@unibocconi.it}

\address{Department of Decision Sciences and BIDSA, Bocconi University, Milan, Italy}

\date{}

\begin{document}
\begin{abstract}
We construct a positive measure on the space of positively oriented \(2\)-vectors in \(\mathbb{R}^4\), whose barycenter is a simple \(2\)-vector, yet which cannot be approximated by weighted Gaussian images of Lipschitz \(Q\)-graphs for any fixed \(Q \in \mathbb{N}\). The construction extends to positively oriented \(m\)-vectors in \(\mathbb{R}^n\) whenever \(n-2 \ge m\geq 2\). This geometric obstruction implies that the approximation result established in \cite{DeRLeiYou} is sharp: all \(Q \in \mathbb{N}\) are indeed necessary to ensure the density of weighted Gaussian images of Lipschitz multigraphs in the space of positive measures with simple barycenter.
As an application, we prove that for every $Q\geq 1$ and $p\ge 2$ there exists a non-polyconvex $Q$-integrand  whose associated energy is weakly lower semicontinuous in $W^{1,p}$. 
This also provides new insight into the question posed in \cite[Remark~1.14]{DeRLeiYou}.
\end{abstract}

\maketitle

\section{Introduction}
Given a function \(\psi: \mathbb{R}^{(n-m)\times m} \to [0,\infty)\) and an integer \(Q \in \mathbb{N}\), we define the \(\psi\)-energy of a weakly differentiable \(Q\)-valued map \(f: B \subseteq \mathbb{R}^m \to \mathcal{A}_Q(\mathbb{R}^{n-m})\) as
\begin{equation}\label{functional}
    E_\psi(f) = \int_B \sum_{i=1}^Q \psi\big((Df(x))_i\big)\, dx \;=:\; \int_B \bar\psi(Df(x))\, dx.
\end{equation}
Although this definition depends on the value of \(Q\), we will omit this dependence from the notation for simplicity.
This energy was studied by De Rosa, Lei and Young in \cite{DeRLeiYou}, and the definition is well posed, as it is invariant under permutations of $\{1,\ldots,Q\}$. Geometrically, \(E_\psi(f)\) is relevant as it represents the \(\psi\)-anisotropic energy of the \(Q\)-valued graph of \(f\), interpreted as a current.

The functional \(E_\psi\) is a particular case of a broader class of energies associated with \(Q\)-integrands, introduced by De Lellis, Focardi, and Spadaro in \cite[Section~0.1]{DeLFocSpa}. A \(Q\)-integrand is a function
$F: (\R^{n-m})^Q\times  ( \R^{(n-m)\times m})^Q\to [0,\infty)$ satisfying for every permutation $\sigma$ of $\{1, . . . , Q\}$ the invariance
$$F(a_1,...,a_Q,X_1,...,X_Q) = F(a_{\sigma(1)},...,a_{\sigma(Q)},A_{\sigma(1)},...,A_{\sigma(Q)}).$$ 
Thus, such integrand induces an anisotropic energy functional on \(Q\)-valued maps via
\[
\mathcal F: f\in Lip(B; \A(\R^{n-m}))\mapsto \int_B F\big((f(x))_1,\ldots,(f(x))_Q,(Df(x))_1, \ldots, (Df(x))_Q\big)\, dx.
\]
While the integrand \(\bar\psi\) defined in \eqref{functional} is indeed a \(Q\)-integrand, the converse is not generally true: not every \(Q\)-integrand arises from a scalar function \(\psi\) as in in \eqref{functional}.

In the study of existence and regularity of minimizers \cite{SSA,Alm68, Alm76,Evans,DLDRG,DPDRG2,DR}—and more generally, of critical points—for anisotropic geometric variational problems, it is essential to impose suitable ellipticity conditions on the anisotropic integrands; see, for instance, the survey \cite{DeR-survey}.

For \(Q = 1\), the notions of \emph{polyconvexity} and \emph{quasiconvexity} for \(\psi\) are classical in the nonlinear analysis literature; see \cite[Definition~4.2]{Mull}. Polyconvexity corresponds to convexity in the space of \(d\)-vectors, as clarified in \cite[Remark~1.4]{DeRLeiYou}, and plays a central role in the regularity theory of anisotropic minimal graphs \cite{DRR,DRT,T21}. Quasiconvexity, on the other hand, is equivalent to the lower semicontinuity of the energy \(E_\psi\) under growth conditions, and is therefore fundamental in establishing the existence of minimizers.  We refer the reader to the foundational works by Acerbi and Fusco \cite{AcerbiFusco1984}, Fonseca \cite{Fons}, and Fonseca and M\"uller \cite{FoMu}, and to the book \cite{Dac-book} for details.

It is by now classical that polyconvexity implies quasiconvexity, but the converse fails in general. In particular, Šverák \cite{Sve}, and independently Alibert and Dacorogna \cite{AliDac}, in the \(Q=1\) setting constructed a continuous function \(\psi : \mathbb{R}^{2 \times 2} \to [0,\infty)\) that is quasiconvex but not polyconvex. Earlier examples of quasiconvex quadratic forms on \(\mathbb{R}^{3 \times 3}\) that are not polyconvex can be found in \cite{Ball, Serre, Terpstra}. However, it is known that every quasiconvex quadratic form on \(\mathbb{R}^{2 \times 2}\) is polyconvex \cite{Serre, Terpstra}, which highlights the significance of the counterexamples in \cite{Sve, AliDac}.

The gap between polyconvexity and quasiconvexity is intimately related to the existence of positive measures on positively oriented \(2\)-vectors in \(\mathbb{R}^4\) whose barycenter is a simple \(2\)-vector, but which cannot be approximated by weighted Gaussian images of Lipschitz graphs. We refer the reader to \cite[Chapters~4,5,6]{Rindler2018} for an extensive discussion on Young measures that are not gradient Young measures, and the connection with quasiconvexity.

For every \(Q \in \mathbb{N}\), the analogues of polyconvexity and quasiconvexity for \(Q\)-integrands were introduced and studied by De Lellis, Focardi, and Spadaro \cite{DeLFocSpa}, building on earlier work by Mattila \cite{Mat}. The main result of \cite{DeLFocSpa} establishes the equivalence between the quasiconvexity of continuous \(Q\)-integrands and the lower semicontinuity of the associated energy functionals.  As in the classical case, quasiconvexity is thus the appropriate notion to ensure the existence of minimizers among \(Q\)-valued maps. Already in \cite[Introduction]{Mat}, Mattila raised the question of the relationship between these ellipticity conditions for $Q$-integrands.

One of the main results of this paper is to show that, analogously to classical $1$-integrands, the lower semicontinuity of the energy functional associated to a lower semicontinuous $Q$-integrand  does not imply polyconvexity. We state below the theorem, referring the reader to Section~\ref{sec:prelim} for the relevant notation and terminology: 
\begin{theorem}\label{thm:qcvx_non_poly}
    Let $m=2$ and $n=4$.
    For every $Q\ge 1$ and $p\ge 2$ there exists a non-polyconvex  $Q$-integrand  whose associated energy is weakly lower semicontinuous in $W^{1,p}$.
\end{theorem}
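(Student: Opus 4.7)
The plan is to use the main geometric theorem of this paper (the existence of a positive measure $\mu$ on positively oriented simple $2$-vectors in $\mathbb{R}^4$, with simple barycenter $\bar v$, which is not in the closure of the weighted Gaussian images of Lipschitz $Q$-graphs for any fixed $Q$) as a separation hypothesis and then to run a Šverák/Alibert--Dacorogna--type construction in the multi-valued setting.

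As a first step, I would dualize the non-approximability of $\mu$: embedding the convex cone of weighted Gaussian images of Lipschitz $Q$-graphs with simple barycenter $\bar v$ into a suitable space of Radon measures with $p$-moments (equipped with the weak-$*$ topology), $\mu$ lies outside the weak-$*$ closure of this cone, so a Hahn--Banach argument produces a continuous function $G\colon \Lambda^2_+(\mathbb{R}^4)\to\mathbb{R}$ with $p$-growth satisfying
\[
G(\bar v) \;>\; \int G \ud\mu,
\qquad
G\bigl(\mathrm{bar}(\nu)\bigr)\le \int G \ud\nu
\]
for every weighted Gaussian image $\nu$ of a Lipschitz $Q$-graph. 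Here the second inequality is exactly the Jensen-type condition defining $Q$-quasiconvexity when pulled back through the Gaussian image correspondence.

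Next, I would transfer $G$ to a $Q$-integrand via the Gaussian image map $g\colon \mathbb{R}^{2\times 2}\to \Lambda^2_+(\mathbb{R}^4)$, setting
\[
F(a_1,\ldots,a_Q,X_1,\ldots,X_Q) := \sum_{i=1}^Q G\bigl(g(X_i)\bigr)\,\bigl|g(X_i)\bigr|,
\]
which is permutation invariant, continuous, and (after adding a convex $p$-coercive lower-order term if needed) has $p$-growth. The inequality from the Hahn--Banach step rewrites, after the change of variables $x\mapsto g(Df(x))$ weighted by the area element $|g(Df)|\ud x$, precisely as the $Q$-quasiconvexity of $F$ in the sense of \cite{DeLFocSpa}; their equivalence theorem then yields weak $W^{1,p}$ lower semicontinuity of the associated energy. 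Non-polyconvexity of $F$ is forced by the strict inequality $G(\bar v) > \int G \ud\mu$: were $F$ polyconvex as a $Q$-integrand, Jensen's inequality against \emph{arbitrary} probability measures on $\Lambda^2_+(\mathbb{R}^4)$ with barycenter $\bar v$ (not only those coming from $Q$-graphs) would give the reversed bound, and $\mu$ would furnish the contradiction.

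The main obstacle is the Hahn--Banach step: one must identify the correct ambient topology and function space so that the $Q$-graph cone is closed (or, if only its weak-$*$ closure is available, verify that separation from $\mu$ persists), and then extract a separating $G$ which, after pullback through $g$ and the symmetric sum, produces a genuine $p$-coercive symmetric $Q$-integrand. The lower semicontinuity and non-polyconvexity conclusions are then formal consequences of the two Jensen inequalities satisfied by $G$, paralleling the classical $Q=1$ proof.
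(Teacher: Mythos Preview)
Your proposal has a genuine gap in the passage from the separation argument to $Q$-quasiconvexity.

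Hahn--Banach applied to the closed convex hull of Gaussian images with barycenter $\bar v$ yields a continuous $G$ and a constant $c$ with $\int G\,d\mu < c \le \int G\,d\nu$ for every such $\nu$; it does not by itself deliver the Jensen inequality $G(\mathrm{bar}(\nu))\le\int G\,d\nu$. Even granting that refinement, the resulting inequality $Q\psi(0)\le\fint_D\bar\psi(\nabla f)$ holds only for competitors with $f|_{\partial D}=Q[0]$, i.e.\ it is the quasiconvexity inequality at the single base point $Q[(0,0)]$. Full $Q$-quasiconvexity in the sense of Definition~\ref{def:q-qcvx} requires the analogous inequality at every affine $Q$-map $\sum_j q_j[a_j+X_j\cdot]$, hence for every $q\le Q$ and every $X\in\mathbb{R}^{2\times2}$, and nothing in a single separation controls these other base points. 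In fact, producing a $1$-integrand $\psi$ that is not polyconvex yet has $\bar\psi$ $Q$-quasiconvex for a \emph{fixed} $Q$ is precisely the open question of \cite[Remark~1.14]{DeRLeiYou}; Remark~\ref{rmk:anisotropic} of the present paper explains concretely why the natural envelope construction fails to close in that geometric setting.

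The paper therefore abandons the $\bar\psi$ form and builds a genuinely $Q$-valued integrand $A$ via the envelope \eqref{eq:quasi_envel}. This $A$ depends on the multiplicity pattern of $(a,X)$ and is in general only lower semicontinuous (Remark~\ref{rmk:A_not_C0}), so the equivalence theorem of \cite{DeLFocSpa} you invoke---which requires a \emph{continuous} integrand---is unavailable. Weak lower semicontinuity of the associated energy is proved directly, through a weakened ellipticity condition (Property~\ref{hp:A}) together with a piecewise-affine approximation lemma (Lemma~\ref{lem:polyapprox}). Non-polyconvexity of $A$ is obtained by checking $A(a,\tilde X_i)=0$ against $A(a,0)>0$; the strict positivity relies on the quantitative mass lower bound of Proposition~\ref{prop:lower_bd}, not merely on the qualitative non-approximability of $\mu$.
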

We do not claim that the integrand in Theorem~\ref{thm:qcvx_non_poly} is quasiconvex.
As previously mentioned, in the case of \textit{continuous} $Q$-integrands,  quasiconvexity is equivalent to  weak lower semicontinuity of the associated energy \cite[Theorem~0.2]{DeLFocSpa}. However, continuity of the integrand is not guaranteed in our setting, where the integrand is constructed as an envelope \eqref{eq:quasi_envel} that may in general only be lower semicontinuous. 
For this reason we formulate Theorem~\ref{thm:qcvx_non_poly} directly in terms of lower semicontinuity of the associated energy. In particular, the theorem shows that polyconvexity is not necessary for lower semicontinuity of the energy in the $Q$-valued framework; see also the discussion below and Section~\ref{sec:final_rmks}.

Theorem~\ref{thm:qcvx_non_poly} relies on our main technical contribution, which we state below, that establishes the existence of a measure on positively oriented \(2\)-vectors in \(\mathbb{R}^4\), with simple barycenter, that cannot be approximated by weighted Gaussian images of Lipschitz \(Q\)-graphs, for any fixed \(Q \in \mathbb{N}\). We again refer the reader to Section~\ref{sec:prelim} for the related notation:
\begin{theorem}\label{thm:notapprox}
    There exists a measure $\mu$ on the positively oriented Grassmannian $\widetilde{\text{Gr}^+}(2,4)$ whose barycenter is parallel to the simple 2-vector $e_1\wedge e_2$, with the following property.   For any fixed $Q\in\N$, the measure $\mu$  cannot be approximated (in the weak-* topology) by weighted Gaussian images of $Q$-graphs associated to $Q$-valued Lipschitz functions $f:B \subseteq \R^2\to \mathcal A_Q(\R^2)$ with $f\llcorner {\bd B }\equiv Q[0]$.
\end{theorem}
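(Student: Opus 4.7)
The strategy is to extend the classical \v{S}ver\'ak / Alibert--Dacorogna counterexample to the $Q$-valued setting by iterating the construction at infinitely many scales. The $Q=1$ case yields a discrete measure with simple barycenter that is not a Gaussian image of any $1$-graph; by nesting this construction self-similarly we produce a single measure that remains inaccessible to $Q$-graph Gaussian images for every fixed $Q$.

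\textbf{Setup.} Near $e_1\wedge e_2$ I parametrise positively oriented simple $2$-vectors via $A\in\R^{2\times 2}\mapsto \xi(A)=(e_1+A^1)\wedge (e_2+A^2)$, with Pl\"ucker coordinates $(1,\,a_{12},\,a_{22},\,-a_{11},\,-a_{21},\,\det A)$. For every Lipschitz $Q$-graph $f:B\to \mathcal A_Q(\R^2)$ with $f|_{\bd B}\equiv Q[0]$, a sheet-wise divergence theorem (enabled by selection theory for $\mathcal A_Q$-valued maps, in the spirit of De Lellis--Spadaro) together with the null-Lagrangian property of $\det$ forces $\bar\mu_f=Q|B|\,e_1\wedge e_2$, independently of $Q$ and of the internal structure of $f$.

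\textbf{Measure and separating functionals.} Starting from \v{S}ver\'ak's discrete measure $\nu_0=\sum_i w_i\delta_{A_i}$ on $\R^{2\times 2}$, supported in a T$_N$-type configuration with $\sum_i w_iA_i=0$ and $\sum_i w_i\det A_i=0$, I would construct $\mu$ iteratively by replacing each atom, at every iteration step $k$, with a rescaled, translated copy of $\nu_0$ centred at $A_i$ of geometrically decreasing support. Pushed forward by $\xi$ and suitably normalised, this yields a measure on $\widetilde{\text{Gr}^+}(2,4)$ with barycenter $|B|\,e_1\wedge e_2$ and \v{S}ver\'ak patterns at arbitrarily small scales. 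For each fixed $Q$ I would then construct a $Q$-quasiconvex but non-polyconvex scalar function $\psi_Q:\R^{2\times 2}\to \R$ (a $Q$-valued refinement of \v{S}ver\'ak's form) plus a polyconvex correction $c_Q\,\xi_{34}$, so that the composite functional $\Phi_Q(\xi)=\psi_Q(A(\xi))+c_Q\,\xi_{34}$ strictly separates the $k(Q)$-th iterate of $\mu$ from its barycenter.

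\textbf{Contradiction and main obstacle.} If $\mu_{f_n}\wto \mu$ with $f_n$ Lipschitz $Q$-graphs having zero boundary trace, the Jensen-type inequality for $Q$-quasiconvex functions acting on $Q$-graph gradients yields $\int \Phi_Q\,d\mu_{f_n}\ge Q|B|\,\psi_Q(0)$ (the determinant term contributing zero via the null-Lagrangian identity), contradicting the strict separation as $n\to \infty$. The principal difficulty is producing, for every fixed $Q$, a function $\psi_Q$ that is both $Q$-quasiconvex and adapted to the $k(Q)$-th scale of the \v{S}ver\'ak iteration while remaining non-polyconvex; its $Q$-quasiconvexity must be established directly through an explicit $Q$-valued laminate argument, so as to avoid any circular appeal to Theorem~\ref{thm:qcvx_non_poly}. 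Closing this induction on $Q$---and verifying that the iterated measure $\mu$ can host all these obstructions simultaneously, without the successive scales cancelling against one another under weak-$*$ testing---is the technical core of the argument.
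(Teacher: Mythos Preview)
Your proposal has a genuine gap at its core. The entire argument hinges on producing, for each fixed $Q$, a $Q$-quasiconvex integrand $\psi_Q$ that strictly separates a level of your iterated measure from its barycenter. You correctly flag the circularity risk and propose to avoid it via an ``explicit $Q$-valued laminate argument,'' but no such argument is supplied, and none is known: \v{S}ver\'ak's proof of quasiconvexity for $Q=1$ rests on a delicate algebraic/rank-one structure with no established $Q$-valued analogue. In this paper the implication runs the other way---the existence of a non-polyconvex $Q$-integrand with lower semicontinuous energy (Theorem~\ref{thm:qcvx_non_poly}) is \emph{deduced from} Theorem~\ref{thm:notapprox}---so your route either presupposes a construction at least as hard as the theorem itself or is circular. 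The fractal iteration is also speculative: you give no mechanism explaining why nesting \v{S}ver\'ak patterns at many scales forces an obstruction that grows with $Q$, nor why the separating functionals attached to different scales would not interfere under weak-$*$ testing.

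The paper's proof is of an entirely different nature and uses no separating integrand at all. Following Burago--Ivanov, the measure is atomic on just three explicit $2$-planes $v_1,v_2,v_3$ with $v_1+v_2+v_3$ parallel to $e_{12}$, chosen so that under a linear stretch $R=\mathrm{diag}(1,1,\varepsilon,\varepsilon)$ two images become $\varepsilon$-horizontal and one $\varepsilon$-vertical. The key step (Lemma~\ref{lem:Q_dens_vert} and Proposition~\ref{prop:lower_bd}) is a purely geometric mass estimate: via slicing of the current $\mathbf{T}_f$ and the planar isoperimetric inequality, any $Q$-graph whose Gaussian image carries positive $\varepsilon$-vertical mass must also carry a comparable amount of ``mixed'' mass (tangent planes that are neither $\varepsilon$-horizontal nor $\varepsilon$-vertical). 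Since the target measure has zero mixed mass but positive vertical mass, weak-$*$ approximation by $Q$-graph Gaussian images is impossible, uniformly for every fixed $Q$. The obstruction is metric and current-theoretic, not functional-analytic.
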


Theorem~\ref{thm:notapprox} should be compared with \cite[Theorem~1.5]{DeRLeiYou}, where De Rosa, Lei, and Young proved that every measure supported on positively oriented \(m\)-vectors in \(\mathbb{R}^n\), whose barycenter is a simple \(m\)-vector, can be approximated by weighted Gaussian images of Lipschitz multigraphs, possibly involving all values of \(Q\). Our result thus shows that \cite[Theorem~1.5]{DeRLeiYou} is sharp: the full range of values is indeed necessary for the approximation to hold.

The applications of \cite[Theorem~1.5]{DeRLeiYou} were twofold. First, it was crucial in the proof of \cite[Theorem 1.16]{DeRLeiYou}, establishing that polyconvexity of geometric integrands is necessary for the validity of the rectifiability theorem for anisotropic stationary varifolds proved by De Philippis, De Rosa, and Ghiraldin \cite[Theorem~1.2]{DPDRG}. This strengthened the earlier result \cite[Theorem~A]{DRK}, which showed the necessity of quasiconvexity for rectifiability. Second, \cite[Theorem~1.5]{DeRLeiYou} played a central role in proving that a classical integrand \(\psi\) is elliptic for multigraphs—i.e., the function \(\bar{\psi}\) defined in \eqref{functional} is a quasiconvex \(Q\)-integrand for every \(Q \in \mathbb{N}\)—if and only if \(\psi\) is polyconvex; see \cite[Theorem~1.8]{DeRLeiYou}.

It remains an open question whether there exists a fixed \(Q \in \mathbb{N}\) such that every continuous integrand \(\psi \in C^0(\mathbb{R}^{(n-m)\times m}, [0,\infty))\) is polyconvex if and only if \(\bar{\psi}\) is a quasiconvex \(Q\)-integrand; see \cite[Remark~1.14]{DeRLeiYou}. Although Theorem~\ref{thm:notapprox} sheds light on this question, a complete answer requires a suitable characterization in the sense of \eqref{functional} of the quasiconvex envelope. In Remark~\ref{rmk:anisotropic} we provide compelling evidence that this is a subtle and challenging task.

\vspace{2ex}

{\bf Structure of the paper.} 
In Section~\ref{sec:prelim}, we collect the necessary preliminaries and fix the notation used throughout the paper.

In Section~\ref{sec:nonapprox}, we prove Theorem~\ref{thm:notapprox}, the first main result of the paper. More precisely, we construct a measure \(\mu\) on the positively oriented Grassmannian \(\widetilde{\mathrm{Gr}}^+(2,4)\), whose barycenter is a simple \(2\)-vector, and which cannot be approximated--for fixed \(Q \in \mathbb{N}\)--by weighted Gaussian images of Lipschitz \(Q\)-graphs. While our construction is carried out in dimensions \((2,4)\), it can be extended to \(\widetilde{\mathrm{Gr}}^+(m,n)\) for every \(n-2 \geq m\geq 2\), as outlined in Remark~\ref{rmk:proj-2}.

This result is the multivalued graphical analogue of a theorem by Burago and Ivanov \cite[Theorem~4]{BurIva}, which demonstrates the existence of measures on the oriented Grassmannian \(\widetilde{\mathrm{Gr}}(2,4)\) that cannot be approximated by weighted Gaussian images of surfaces bounded by simple planar curves. In particular, Theorem~\ref{thm:notapprox} shows that the approximation theorem \cite[Theorem~1.5]{DeRLeiYou} is sharp: although \cite{DeRLeiYou} establishes that every positive measure on \(\widetilde{\mathrm{Gr}}^+(m,n)\) with simple barycenter can be approximated by Gaussian images of Lipschitz multigraphs, this approximation requires allowing arbitrary values of \(Q\).
Beyond their intrinsic geometric significance, such filling results have key applications to the study of ellipticity conditions for anisotropic integrands. Indeed, \cite[Theorem~1.8]{DeRLeiYou} depends crucially on \cite[Theorem~1.5]{DeRLeiYou}, just as the equivalence between elliptic and convex hulls in \cite[Theorem~3]{BurIva} implies the approximation result \cite[Theorem~2]{BurIva}.

In Section~\ref{sec:quasi/poly} we construct a non-polyconvex \(Q\)-integrand with associated energy that is lower semicontinuous. To this aim, we first define a $1$-integrand $\psi$ on $\R^{2\times 2}$, which vanishes on (and only on) the support of the measure $\mu$ constructed in Theorem~\ref{thm:notapprox}. Then we would like to characterize the quasiconvex envelope of the associated $Q$-integrand $\bar \psi$. 
However, carrying this out is technically challenging. The key difficulty is that the usual explicit candidate $A$ for the quasiconvex envelope \eqref{eq:quasi_envel}—in the spirit of Dacorogna’s formula for quasiconvex envelopes of $1$-integrands (see \cite[Section~6.3]{Dac-book}, \cite{Dac})—produces a $Q$-integrand that in general may be just lower semicontinuous, rather than continuous.
As discussed in greater detail in Remark \ref{rmk:A_not_C0}, if a sequence of $Q$-matrices with $Q$ distinct values of multiplicity one converges to a $Q$-matrix supported on the same value with multiplicity $Q$, there may be a lower semicontinuity gap. Indeed  the class of competitors for the envelope \eqref{eq:quasi_envel} evaluated at the limit is strictly larger than the class of competitors for the envelope evaluated along the sequence.
As a result, the continuous framework developed in \cite{DeLFocSpa} cannot be directly applied. We thus  undertake a careful analysis of the envelope $A$ of the  integrand $\psi$, culminating in Theorem~\ref{prop:quasi_envel}. In this result, we show that the integrand $A$ gives rise to a weakly lower semicontinuous energy on $W^{1,p}$ for every $p\ge 2.$

The potential loss of continuity motivates us to introduce a slight variation of quasiconvexity: Property~\ref{hp:A}. For lower semicontinuous integrands with the same type of dependence as $A$, this condition is essentially equivalent to weak lower semicontinuity of the associated energy functional, as shown in Proposition~\ref{prop:iff}. In general, this condition is weaker than the quasiconvexity introduced in \cite{DeLFocSpa}. Nonetheless, when the $Q$-integrand is continuous, Property~\ref{hp:A} is equivalent to  quasiconvexity.

A key technical tool we use in the proofs of Section~\ref{sec:quasi/poly} is the piecewise affine approximation Lemma~\ref{lem:polyapprox}, proved in Section~\ref{sec:piecewise}. Inspired by \cite{DanPra}, it provides an approximation of the energy of a Lipschitz $Q$-valued map by $Q$-valued functions that are piecewise affine outside a set of arbitrarily small measure. This lemma is instrumental in our study of quasiconvex envelopes for $Q$-integrands and we believe it is of independent interest.

\vspace{3ex}

{\bf Acknowledgment.} Both authors were funded by the European Union: the European Research Council (ERC), through StG ``ANGEVA'', project number: 101076411. Views and opinions expressed are however those of the authors only and do not necessarily reflect those of the European Union or the European Research Council. Neither the European Union nor the granting authority can be held responsible for them.

\section{Notation and preliminaries}\label{sec:prelim}
Let $n,m\in\N$ with $n>m$. Throughout the paper we will mostly focus on $m=2$, $n=4$.

We will  use $B$ to denote the unit open ball in $\R^m$, while $B(x,r)$ denotes the open ball of center $x$ and radius $r$ in $\R^n$. We let $D$ denote the unit cube $(-\frac12,\frac12)^m$ in $\R^m$ and  $D(x,r)=x+(-\tfrac r2,\tfrac r2)^m$. We will also denote by $\{e_i\}_{i=1,\dots,n}$ the standard basis in $\R^n$, and $e_{ij}=e_i\wedge e_j.$

Furthermore, for every $x\in\R^m$ we denote by $[x]$ the Dirac mass in $x$.

\subsection{Currents}\label{sub:current}

We recall some basic definitions of the theory of currents, referring to \cite{Sim-book} for additional material. 

 We denote by $\Lambda^m\R^n$ the set of $m$-vectors in $\R^n$ and with  $\widetilde{ Gr}(m,n)$ the oriented Grassmannian of oriented $m$-planes in $\R^n$. An $m$-current $T$ is an element in the dual of the space $C^\infty_c(\R^n; \Lambda^m\R^n)=:\mathscr D^m(\R^n)$ of  differential $m$-forms on $\R^n$.  The boundary of a current $T$ is the $(m-1)$-current defined by $\bd T(\eta):= T(d\eta )$ for every $\eta \in \mathscr D^m(\R^n).$ The mass of the $m$-current $T$ is defined as $\M(T):=\sup \{ T(\eta): \eta \in\mathscr D^m(\R^n), |\eta|\le 1 \}$. We define a  rectifiable $m$-current $T\in \textbf{R}^m(\R^n)$ as a couple $T=(M,\theta\omega_P )$, where $M$ is an $m$-rectifiable set in $\R^n$, $\theta:M\to \R$ is a locally $\H^m$-integrable positive function  (the multiplicity) and $\omega_P: M\to \Lambda^m(\R^n)$ is an $\H^m$-measurable  function (the orientation) such that for $\H^m$-a.e. point $x\in M $, $\omega_P(x)$ can be expressed in the form $v_1\wedge\dots \wedge v_m$, where $v_1,\dots, v_m$ form an orthogonal basis for $T_x M=P\in\widetilde{Gr}(m,n)$, the approximate tangent space  to $M$ at $x$. 
The orientation field is $P:M\to \widetilde{Gr}(m,n)$.
We will often identify $\widetilde{Gr}(m,n)$ with unit simple $m$-vectors in $\Lambda^m\R^n$ by identifying $P$ with $\omega_P$.
We define an  integer-rectifiable $m$-current (integral current) $T\in \textbf{I}^m(\R^n)$ as a rectifiable $m$-current $T=(M,\theta\omega_P)$ where $\theta $ takes only  integer values. As a particular instance of integral current,  for every oriented $m$-dimensional rectifiable surface $\Sigma$ in $\R^n$ we denote the fundamental  class of $\Sigma$, i.e., the multiplicity-one integral current associated to $\Sigma$, with $\llbracket \Sigma \rrbracket:=(\Sigma, \omega_{T_x\Sigma})\in \textbf{I}^m(\R^n)$, where $\omega_{ T_x\Sigma}(x)$  is an $m$-vector representing the oriented tangent space to $\Sigma$ at $x$.

We also recall the notions of push-forward  and slicing of a current. Given an integer-rectifiable $m$-current $T=(M,\theta\omega_P)$  and a Lipschitz function $f:\R^m\to \R$, we define the slice of $T$ by $f$ at $\mathcal H^1$-a.e. $t\in\R$ by $\langle T, f ,t   \rangle := (M_t, \theta_t \omega_t) $ where $M_t=f^{-1}(t)$, $\theta_t=(\theta\llcorner M_t)\chi_{\{\nabla ^Mf\neq 0\}}$ and $\omega_t= \omega_P\llcorner\frac{\nabla^M f}{|\nabla^M f|}$.  For $\mathcal H^1$-a.e. $t\in \R$ the slice $\langle T,f,t\rangle $ is well-defined and is an integral $(m-1)$-current.
Given a Lipschitz map $f:U\subseteq \R^m\to V\subseteq \R^n$  with $V$ open, if $f$ is proper, i.e. it  satisfies $f^{-1}(K)\cap \text{spt}(T)$ is a compact subset of $U$ whenever $K$ is a compact subset of $V$, the push-forward of $T\in \textbf{R}^\ell(\R^m)$ by $f$  is the rectifiable $\ell$-current on $\R^n$ defined by $f_\sharp T(\eta) = \int_M \theta(x)\langle (\eta(f(x)), (df_\sharp \omega_P)(x) \rangle \,  d\H^\ell(x)$ for every $\eta \in \mathscr D^\ell(V)$, where  $df_\sharp $ denotes the standard push-forward of $\ell$-vectors induced by the 1-form $df$.

Lastly, for a rectifiable $m$-current $T=(M, \theta \omega_P)\in \mathbf R^m(\R^n)$, we define the weighted Gaussian image $\gamma_T\in \mathcal M(\widetilde{Gr}(m,n))$ of $\Sigma$ as the pushforward $P_\sharp(\theta \H^m \llcorner M)$ of the measure $\theta \H^m \llcorner M$ under the orientation field $P : M  \to \widetilde{Gr}(m,n)$.

\subsection{Multivalued maps}

For a given integer $Q\ge1 $ we define the space of $Q$-points in $\mathbb{R}^n$ as 
\begin{equation*}
\A(\mathbb{R}^{n}) :=\left\{\sum_{i=1}^Q[x_i]\,:\,x_i\in\mathbb R^{\ell}\;\textrm{for every  }i=1,\ldots,Q\right\},
\end{equation*}
equipped with the following distance $\G$. Given two $Q$-points $x=\sum_i[x_i],y=\sum_i[y_i]\in\A(\R^n)$ we  denote 
\[
\G(x,y)=\min_{\sigma\in \mathcal P_Q} \sqrt{\sum_i|x_i-y_{\sigma(i)}|^2},
\]
where $\mathcal P_Q$ denotes the group of permutations of $\{1,\dots,Q\}.$
We can identify $\mathcal{A}_Q(\R^{n})$ with the configuration space $(\R^{k})^Q/\text{Sym}(Q)$. Whenever clear from the context, we simply write $\A$ instead of $\A(\R^n)$.  Translations are well-defined in $\A$ only by points of full multiplicity. Given $x\in \R^n$ we denote by $\tau_{x}y=\sum_{j}[y_j+x]$ the translation by the point $Q[x]\in\A$. 

A (Lipschitz) $Q$-valued function is a (Lipschitz) map from an open set $U\subseteq  \R^m$ to $\mathcal{A}_Q(\mathbb{R}^{n-m})$.
A multigraph  $T_f$  is the graph of the $Q$-valued function $f$,  defined by
$$T_f = \{(x,y)\in U\times \R^{n-m}, y \in \text{spt} f(x)\};$$
note that for all $x$, $\text{spt} (f(x))$ has cardinality at most $Q$. A Lipschitz multigraph   naturally supports an integral current \cite[Definition 1.10]{DeLSpa11}.
This can be defined using \cite[Lemma 1.1]{DeLSpa15}, which ensures that, for every Lipschitz $Q$-valued function $f:U\to\A(\R^{n-m})$ defined on a measurable subset $U$ of $ \R^m$, there exists   a countable measurable partition $(U_i)_{i\in\N}\subseteq U$ and Lipschitz functions $f^i_j: U_i\to \R^{n-m}$ for $j=1,\dots,  Q,$ such that $f\llcorner U_i= \sum_{j=1}^Q [f^i_j]. $ Assuming that each $f^i_j$ is proper, one can define the integral current $\textbf{T}_f$ associated to the multigraph $T_f$ as
$$
\textbf{T}_f:= (\cdot,f(\cdot))_\sharp \llbracket  U \rrbracket := \sum_{i,j}(\cdot,f^i_j(\cdot))_\sharp\llbracket U_i \rrbracket.
$$
We further refer to \cite{DeLSpa11,DeLSpa15,DeLFocSpa} for the relevant results concerning $Q$-valued functions.

\subsection{Anisotropic integrands}
We now recall some notions concerning  anisotropic energies defined on multigraphs, following \cite{DeLFocSpa}.
\begin{definition}\label{def:Q-integr}
    A measurable map $F: (\R^{n-m})^Q\times  ( \R^{(n-m)\times m})^Q\to \R$ is a $Q$-integrand if for every permutation $\sigma\in \mathcal P_Q$ it holds
$$F(a_1,...,a_Q,X_1,...,X_Q) = F(a_{\sigma(1)},...,a_{\sigma(Q)},X_{\sigma(1)},...,X_{\sigma(Q)}).$$  
\end{definition}
In particular, we can evaluate a $Q$-integrand as above  on elements   $(a,X)\in  \A(\R^{n-m}\times\R^{(n-m)\times m})$, 
meaning that $F(a,X)$ is the value of $F$ at a selection  representing $(a,X)$ in $(\R^{n-m}\times\R^{(n-m)\times m})^Q$. 
With a slight abuse of notation in the following we will not distinguish between elements in $(\R^{n-m}\times\R^{(n-m)\times m})^Q$ and $\A(\R^{n-m}\times\R^{(n-m)\times m})$ when evaluating $Q$-integrands.
In particular, given a weakly differentiable $Q$-valued map  $f:\R^{m}\to \A(\R^{n-m})$, the expression $F(f,\nabla f)$ is well-defined almost everywhere. We then define the notion of quasiconvexity and polyconvexity considered in \cite{DeLFocSpa}.

\begin{definition}\label{def:q-qcvx}
    We say that a locally bounded  $Q$-integrand $ F:(\R^{n-m})^Q\times (\R^{(n-m)\times m})^Q\to \R$ is quasiconvex if for every
    $Q$-valued affine map $u(x)=\sum_{j=1}^Jq_j[a_j + X_j\cdot x]$ with  $a_j\neq a_i$ for $i\neq j$, it holds     \begin{equation}\label{eq:quasiconvexity}
		F (u(0),\nabla u(0))\le \fint_{D} F(\underbrace{a_1,\dots,a_1}_{q_1},\dots,\underbrace{a_J,\dots,a_J}_{q_J}, \nabla f_1(x),\dots,\nabla f_J(x))dx 
,  
	\end{equation}
for every collection of maps $f_j\in Lip(D,\mathcal{A}_{q_j}(\R^{n-m}))$ satisfying $f_j\llcorner \bd D= q_j[a_j+X_j\,\cdot]\llcorner\bd D$.
\end{definition} 
Here and in the following, with a slight abuse of notation, with $f\llcorner \bd D = [a +X\,\cdot] \llcorner\bd D$ for $a\in\R^{n-m}$ and $X\in\R^{(n-m)\times m}$  we mean that for every $x\in\bd D$ it holds $f(x)= [a +X \cdot x]$.

\begin{definition}\label{def:poly}
  Let $\tau(n-m,m):=\sum_{j=1}^{\min\{ n-m,m \}} \left(\begin{smallmatrix}
    n-m\\j
\end{smallmatrix}\right) \left(\begin{smallmatrix}
    m \\ j 
\end{smallmatrix}\right) $ and define $ad:\R^{(n-m)\times m}\to \R^{\tau(n-m,m)}$  as $ad(X)=(X,\text{adj}_2(X),\dots,\text{adj}_{\min\{n-m,m\}}(X))$, where $\text{adj}_k X$ stands for the matrix of all $k\times k$ minors of $X$. 
A $Q$-integrand $F: (\R^{n-m})^Q\times  (\R^{(n-m)\times m})^Q \to \R$ is said to be polyconvex if for every $a\in(\R^{n-m})^Q$  there exists a convex function $g_a:(\R^{\tau(n-m,m)})^Q\to \R$ such that 
  \[
  F(a,X_1,\dots, X_Q)= g_a(ad(X_1),\dots, ad(X_Q)).
  \]
\end{definition}

The definition of polyconvexity in Definition~\ref{def:poly}  is equivalent to require that the integrand $F(a,\cdot)$ can be extended to a       convex function on  $((\bigwedge^m\R^n)^+)^Q\subseteq (\bigwedge^m\R^n)^Q$,  where $(\bigwedge^m\R^n)^+$  denotes  the subset of $m$-vectors whose projection to $\R^m\subseteq \R^n$ is positively oriented. This can be done in the following way. 
Given a matrix $X\in \R^{(n-m)\times m}$, we can associate to it a simple $m$-vector considering 
\[
M(X):=\begin{pmatrix}
	id_{m\times m}\\
	X
\end{pmatrix}\in \R^{n\times m}
\]
and defining the simple $m$-vector $\bigwedge M(X):=v_1\wedge\dots\wedge  v_m$, where $v_1,\dots, v_m$ are the columns of $M(X)$. This procedure gives a map $\bigwedge M: \R^{(n-m)\times m}\to \bigwedge^m \R^n$, and  one can calculate that the coordinates $\bigwedge M(X)$ in the standard basis of $\bigwedge^m\R^n$ are the minors of $X$ of any order, which in turn coincide with the $m\times m$ minors of $M(X)$ i.e., $ad(X)$.  
This procedure can be extended by linearity to elements in $(\R^{(n-m)\times m})^Q$, which can be identified with $Q$ copies of simple $m$-vectors modulo permutations.

The notion of quasiconvex  $Q$-integrand is particularly relevant, as in the case of continuous integrands satisfying growth conditions it is equivalent to the lower semicontinuity of the associated energy. Indeed, by \cite[Theorem 0.2]{DeLFocSpa}  the following characterization holds. 

\begin{theorem}[\cite{DeLFocSpa}]\label{thm:q-qcvx_implies_lsc}
    Let $p\in [1,+\infty), U\subseteq \R^m$ be a bounded open set and $F:(\R^{n-m})^Q\times (\R^{(n-m)\times m})^Q\to \R$ be a continuous $Q$-integrand. If $F$ is quasiconvex and satisfies   for some $C>1$:
    \[
    0\le F(a,X)\le C(1+|a|^q+|X|^p)\qquad \quad \forall (a,X)\in (\R^{n-m})^Q\times(\R^{(n-m)\times m})^Q,    
    \]
    where $q=0$ if $p>m$, $q=p^*$ if $p<m$ and $q\ge 1$ is any exponent if $p=m$,
    then the associated energy   
    \[
    \mathcal F: f\in Lip(U, \A(\R^{n-m}))\mapsto \fint_U F(f,\nabla f) 
    \]
    is weakly  lower semicontinuous in $W^{1,p}(U,\A(\R^{n-m}))$. Conversely, if $\mathcal F$ is weakly$^*$  lower semicontinuous in $W^{1,\infty}(U,\A(\R^{n-m}))$, then $F$ is quasiconvex.
\end{theorem}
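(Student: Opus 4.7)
My plan is to handle the two implications separately, following the classical Acerbi--Fusco strategy for the forward direction and a periodic test construction for the converse, both adapted to the $Q$-valued setting via the decomposition machinery of \cite{DeLSpa11,DeLSpa15}.

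For the forward implication (quasiconvex $\Rightarrow$ weakly lower semicontinuous), let $f_k \rightharpoonup f$ in $W^{1,p}(U,\A(\R^{n-m}))$ with $\int_U F(f_k,\nabla f_k)\,dx$ uniformly bounded. Set $\mu_k := F(f_k,\nabla f_k)\,\mathcal L^m\llcorner U$ and extract a weak$^*$ subsequential limit $\mu$. It suffices to show that at every point $x_0 \in U$ that is simultaneously a Lebesgue point of $f$, of $\nabla f$, and of $d\mu/d\mathcal L^m$, one has
\[
\frac{d\mu}{d\mathcal L^m}(x_0) \ge F(f(x_0),\nabla f(x_0)).
\]
Rescaling on cubes $D(x_0,r)$ the limiting profile is the affine $Q$-valued map $u(x) = f(x_0) + \nabla f(x_0)\cdot x$; by the decomposition lemma \cite[Lemma 1.1]{DeLSpa15} it splits as $\sum_j q_j[a_j + X_j\cdot x]$ with distinct $a_j$, and for $r$ small the rescaled maps $f_{k,r}$ decompose accordingly into $\mathcal A_{q_j}(\R^{n-m})$-valued Lipschitz components $f_{k,j}$ whose boundary traces are close to the corresponding affine sheets. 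Correcting each $f_{k,j}$ in a thin collar near $\bd D$ so that its trace matches the affine datum exactly, then applying the quasiconvexity inequality \eqref{eq:quasiconvexity} and summing over $j$, delivers the pointwise lower bound.

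The main obstacle here is the boundary correction: one must pass from ``close'' to ``exact'' affine traces in the metric space $\A$---where translations are only well defined by full-multiplicity points---while preserving the splitting into $\mathcal A_{q_j}$-valued pieces. This is the $Q$-valued analogue of the De Giorgi--Fonseca--M\"uller cutoff trick. It can be carried out by combining a piecewise affine approximation in the spirit of Lemma~\ref{lem:polyapprox} with a careful gluing along thin collars, using the continuity of $F$ together with the growth bound to absorb the correction cost.

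For the converse (weak$^*$ lower semicontinuity in $W^{1,\infty}$ $\Rightarrow$ quasiconvex), I would test against a periodic rescaling. Fix an affine datum $u(x) = \sum_j q_j[a_j + X_j\cdot x]$ on $D$ with distinct $a_j$ and competitors $f_j \in Lip(D,\mathcal A_{q_j}(\R^{n-m}))$ satisfying $f_j\llcorner\bd D = q_j[a_j + X_j\,\cdot]\llcorner\bd D$. Viewing each $f_j$ as an oscillation around the affine sheet $q_j[a_j + X_j\cdot x]$, extend it periodically along the tiling of $\R^m$ by $D$-cubes (compatibility of boundary traces makes this well defined and Lipschitz), rescale by a factor $1/k$, and reassemble all sheets into a single $u_k \in Lip(D,\A(\R^{n-m}))$. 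For $k$ large the sheets remain pairwise disjoint in $\A$ because the $a_j$ are distinct, so the reassembly is itself Lipschitz. By construction $u_k\llcorner\bd D = u\llcorner\bd D$, $u_k \wto u$ in $W^{1,\infty}$ by Riemann--Lebesgue, and $\fint_D F(u_k,\nabla u_k)\,dx$ equals the right-hand side of \eqref{eq:quasiconvexity} by periodicity. Weak$^*$ lower semicontinuity along $u_k \wto u$ then yields the quasiconvexity inequality.
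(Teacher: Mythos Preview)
This theorem is not proved in the paper; it is quoted from \cite[Theorem~0.2]{DeLFocSpa}. The paper does, however, rehearse and adapt the argument in two places: Step~1 of the proof of Theorem~\ref{prop:quasi_envel} runs the Fonseca--M\"uller blow-up for the sufficiency direction, and the first half of the proof of Proposition~\ref{prop:iff} carries out the periodic-rescaling argument for the necessity direction. Your two-pronged strategy is exactly this, so at the level of overall architecture your proposal matches both \cite{DeLFocSpa} and the paper's adaptations.

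Two imprecisions are worth flagging. In the forward direction you omit the preliminary truncation replacing the original sequence by an equi-integrable one (this is \cite[Lemma~1.5]{DeLFocSpa}, invoked explicitly in the paper's Step~1); without it the growth bound alone does not control the collar-correction cost. Your appeal to Lemma~\ref{lem:polyapprox} is also misplaced: that lemma approximates a \emph{fixed} map by piecewise-affine ones, whereas the boundary correction here needs the retraction $\vartheta_k$ of \cite[Lemma~3.7]{DeLSpa11} together with the interpolation Lemma~\ref{lem:interp}, as in Claims~1 and~2 of the paper's Step~1. In the converse direction, $\fint_D F(u_k,\nabla u_k)$ does not \emph{equal} the right-hand side of \eqref{eq:quasiconvexity}: the gradient is periodic but $u_k$ is the affine datum plus an $O(1/k)$ oscillation, so one only gets convergence, using continuity of $F$ in the first slot and $u_k\to u$ uniformly. (The paper's Proposition~\ref{prop:iff} shortcuts this via hypothesis \eqref{eq:dep_F}; for the general continuous $F$ of Theorem~\ref{thm:q-qcvx_implies_lsc} the continuity argument is genuinely needed.)
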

As in the single-valued case, showing that an integrand is quasiconvex is not easy in general, while polyconvexity may be easier to check. By \cite[Theorem 0.5]{DeLFocSpa},  we have the following implication.

\begin{theorem}[\cite{DeLFocSpa}]\label{thm:dlsp}
    Every locally bounded polyconvex $Q$-integrand is quasiconvex.
\end{theorem}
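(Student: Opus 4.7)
The plan is to adapt the classical Morrey--Ball argument for polyconvex integrands to the $Q$-valued setting. Fix a $Q$-valued affine map $u(x)=\sum_{j=1}^{J}q_j[a_j+X_j\cdot x]$ with distinct $a_j$ and admissible competitors $f_j\in Lip(D,\mathcal A_{q_j}(\R^{n-m}))$ with $f_j\llcorner\bd D=q_j[a_j+X_j\cdot]\llcorner\bd D$. Write $a=(a_1,\dots,a_1,\dots,a_J,\dots,a_J)$ for the $Q$-tuple in which each $a_j$ is repeated $q_j$ times. By Definition~\ref{def:poly}, there exists a convex function $g_a\colon(\R^{\tau(n-m,m)})^Q\to\R$ such that $F(a,Y_1,\dots,Y_Q)=g_a(ad(Y_1),\dots,ad(Y_Q))$, and the $Q$-integrand invariance of $F$ forces $g_a$ to be symmetric under every permutation of $\{1,\dots,Q\}$ preserving the blocks determined by $a$.

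Using the global measurable decomposition of \cite[Lemma~1.1]{DeLSpa15}, I would select measurable branches $\tilde f_j^{\,k}\colon D\to\R^{n-m}$ with $f_j=\sum_k[\tilde f_j^{\,k}]$ a.e., and set $Y(x):=(ad(\nabla \tilde f_j^{\,k}(x)))_{j,k}\in(\R^{\tau})^Q$; by the symmetry of $g_a$, the pointwise value $g_a(Y(x))=F(a,\nabla f(x))$ is independent of the branches chosen. Jensen's inequality applied to the convex $g_a$ then yields
\[
\fint_D F(a,\nabla f(x))\,dx \;\ge\; g_a\!\left(\fint_D Y(x)\,dx\right).
\]
To simplify the right-hand side I invoke the null-Lagrangian property of minors through the integral current formalism recalled in Subsection~\ref{sub:current}. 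Since the boundary traces of $f_j$ and of the affine $Q$-map $L_j:=q_j[a_j+X_j\cdot]$ coincide on $\bd D$, the associated multigraph currents $\textbf{T}_{f_j}$ and $\textbf{T}_{L_j}$ share the same boundary in $D$. Testing against constant-coefficient (hence closed) $m$-forms on $\R^n$ and using the area formula yields, for every minor order $\ell$,
\[
\sum_k\int_D \text{adj}_\ell(\nabla \tilde f_j^{\,k}(x))\,dx \;=\; q_j\,|D|\,\text{adj}_\ell(X_j),
\]
so that the block-average $\frac{1}{q_j}\sum_k\fint_D ad(\nabla \tilde f_j^{\,k})\,dx$ equals $ad(X_j)$ for each $j$.

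To close the argument I would exploit the block-symmetry of $g_a$: averaging $g_a\bigl(\fint_D Y\,dx\bigr)$ over all $\prod_j q_j!$ block-preserving permutations of its arguments and applying Jensen once more to this finite average collapses each block to its mean, giving
\[
g_a\!\left(\fint_D Y\,dx\right) \;\ge\; g_a\!\left((ad(X_j))_{j,k}\right) \;=\; F(u(0),\nabla u(0)),
\]
which is precisely the quasiconvexity inequality \eqref{eq:quasiconvexity}. I expect the null-Lagrangian step to be the main technical obstacle: one must identify the pairing of $\textbf{T}_{f_j}$ with constant-coefficient $m$-forms with the sum of the corresponding minors along the selected branches, and verify that the boundary condition $f_j|_{\bd D}=L_j|_{\bd D}$ indeed implies $\bd \textbf{T}_{f_j}=\bd \textbf{T}_{L_j}$ as currents; both points rely on a careful use of the decomposition in \cite[Lemma~1.1]{DeLSpa15} together with the construction of $\textbf{T}_{f_j}$ recalled in Subsection~\ref{sub:current}.
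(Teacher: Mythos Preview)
The present paper does not prove Theorem~\ref{thm:dlsp}; it is quoted from \cite[Theorem~0.5]{DeLFocSpa} without argument, so there is no in-paper proof to compare your proposal against.

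Your outline is the natural adaptation of the Morrey--Ball argument to the $Q$-valued setting and is essentially correct, with one point to fix. The $Q$-integrand invariance of $F$ does not \emph{force} the convex representative $g_a$ of Definition~\ref{def:poly} to be symmetric under block-preserving permutations: $g_a$ is not unique, and the definition only determines it on the range of $X\mapsto(ad(X_1),\dots,ad(X_Q))$. The remedy is to replace $g_a$ by its block-symmetrization $\tilde g_a(\xi)=|G|^{-1}\sum_{\sigma\in G}g_a(\sigma\xi)$ with $G=\prod_j\mathcal P_{q_j}$, which is still convex and still satisfies $\tilde g_a(ad(X_1),\dots,ad(X_Q))=F(a,X_1,\dots,X_Q)$ precisely because $F$ is $G$-invariant in the gradient slot. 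With that adjustment both applications of Jensen go through as you wrote, and the null-Lagrangian identity $\sum_k\int_D ad(\nabla\tilde f_j^{\,k})=q_j|D|\,ad(X_j)$ indeed follows from $\partial\textbf{T}_{f_j}=\partial\textbf{T}_{L_j}$ together with the exactness of constant-coefficient $m$-forms on $\R^n$; the boundary identity for multigraph currents you flag is handled in \cite{DeLSpa15}.
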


Theorem~\ref{thm:qcvx_non_poly} is one of the main results of this paper, and is in spirit a counterexample to the reverse implication in Theorem~\ref{thm:dlsp}. However, as noted in the introduction, in the following we will work with semicontinuous integrands, hence the theory of \cite{DeLFocSpa} does not apply.
Theorem~\ref{thm:qcvx_non_poly} is a $Q$-integrands counterpart of the classical results for $1$-integrands proved by Sverak \cite{Sve} and Alibert and Dacorogna \cite{AliDac}, who showed the existence of a continuous $1$-integrand $\psi:\R^{2\times 2}\to \R$ that is quasiconvex (for graphs of single-valued Lipschitz functions) but not polyconvex.

\vspace{2ex}

We now compare Definition~\ref{def:q-qcvx} and Definition~\ref{def:poly} with \cite[Definition 1.7]{DeRLeiYou} and \cite[Definition 1.9]{DeRLeiYou} respectively. The latter are tailored to the study of anisotropic integrands on rectifiable currents. 
 Given a $1$-integrand $\psi:\R^{(n-m)\times m}\to \R$, we denote 
\[
\bar \psi (X)=\sum_{j=1}^Q \psi (X_j), \quad \text{ for all }   X_1,\dots,X_Q\in \R^{(n-m)\times m},
\]
which can be extended in the trivial way to $(\R^{n-m})^Q\times (\R^{(n-m)\times m})^Q$.
It is easy to see that this defines a $Q$-integrand.

If $\Psi:\bigwedge^m\R^n\to (0,+\infty)$ is a 1-homogeneous function,  an energy $E_\Psi  $ on rectifiable  currents can be defined as follows: for every $T=(E,\theta \omega_P)\in \textbf{R}^m(\R^n)$   set 
\[
    E_\Psi(T)=\int_E \theta(x) \Psi(\omega_P(x))\, d\H^m(x).
\]
Thanks to the operator $\bigwedge M$ previously introduced, the restriction of $\Psi$ to the  positively oriented  unit simple $m$-vectors can be seen as  a $1$-integrand on $\R^{(n-m)\times m}$. If we let $\psi: \R^{(n-m)\times m}\to (0,\infty)$, $\psi(X) := \Psi(\bigwedge M(X))$
for all $X$, by the area formula for $Q$-valued graphs \cite[Corollary 1.11]{DeLSpa15} we have 
$$
\int_U \bar \psi(\nabla f) = E_{\Psi}(\textbf{T}_f)
$$
for any Lipschitz $Q$-valued function $f: U\subseteq \R^m\to \mathcal{A}_Q(\mathbb{R}^{n-m})$. 
It is therefore interesting to study the quasiconvexity of $\bar \psi$ as it relates to the lower semicontinuity of the functional $E_\Psi$ defined on $Q$-valued graphs. 
This is indeed the focus of \cite{DeRLeiYou}. 
Hence, the notion of polyconvexity considered in \cite{DeRLeiYou} refers to the classical polyconvexity \cite[Definition 4.2]{Mull} of $\psi$, requiring the existence of a 1-homogeneous convex function $\Psi: (\Lambda^m \R^n)^+\to \R$ such that $\psi=\Psi\circ \Lambda.$

Surprisingly, in \cite[Theorem 1.8]{DeRLeiYou} it is proved the following:
\begin{theorem}[\cite{DeRLeiYou}]\label{thm:DLY}
    Let $\psi\in C^0(\R^{(n-m)\times m},[0,+\infty))$. Then $\psi$ is polyconvex if and only if $\bar \psi $ is a  quasiconvex $Q$-integrand  for every $Q\in\N$. 
\end{theorem}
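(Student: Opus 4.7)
The plan is to establish the two implications separately, with the harder backward direction relying on the approximation theorem \cite[Theorem~1.5]{DeRLeiYou}.

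For the forward direction, suppose $\psi$ is polyconvex, so that there is a $1$-homogeneous convex extension $\Psi:(\Lambda^m\R^n)^+\to[0,+\infty)$ with $\psi=\Psi\circ\bigwedge M$. Then
\[
\bar\psi(X_1,\ldots,X_Q) \;=\; \sum_{j=1}^Q \psi(X_j) \;=\; \sum_{j=1}^Q \Psi(\bigwedge M(X_j)),
\]
and the symmetric function $g(v_1,\ldots,v_Q):=\sum_j \Psi(v_j)$ is convex on $((\Lambda^m\R^n)^+)^Q$ as a sum of convex functions. Since the coordinates of each $\bigwedge M(X_j)$ in the standard basis are exactly the entries of $ad(X_j)$, this realizes $\bar\psi$ as a polyconvex $Q$-integrand in the sense of Definition~\ref{def:poly}. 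Theorem~\ref{thm:dlsp} then yields that $\bar\psi$ is quasiconvex for every $Q\in\N$.

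For the backward direction, assume $\bar\psi$ is quasiconvex for every $Q\in\N$. Define $\Psi$ on the image of $\bigwedge M$ by $\Psi(\bigwedge M(X)) := \psi(X)$ and extend by $1$-homogeneity to $(\Lambda^m\R^n)^+$. Polyconvexity of $\psi$ reduces to showing $\Psi$ is convex, which by a standard characterization is equivalent to the Jensen-type inequality
\[
\Psi(v) \;\le\; \frac{1}{\mu(\widetilde{Gr}^+(m,n))}\int_{\widetilde{Gr}^+(m,n)} \Psi(\omega)\, d\mu(\omega)
\]
for every finite positive measure $\mu$ on $\widetilde{Gr}^+(m,n)$ whose barycenter is a positive multiple of a simple $m$-vector $v = \bigwedge M(X)$. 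Two ingredients combine to yield this inequality: first, the approximation theorem \cite[Theorem~1.5]{DeRLeiYou} provides Lipschitz $Q_k$-graphs $f_k:B\to\A(\R^{n-m})$ with boundary $Q_k[X\,\cdot]$ whose weighted Gaussian images $\gamma_{\textbf{T}_{f_k}}$ converge weakly-$\ast$ to a suitable multiple of $\mu$; second, the quasiconvexity of $\bar\psi$ at $Q=Q_k$ applied to the affine competitor $u(x)=Q_k[X\cdot x]$ and the competitor $f_k$, combined with the area formula identity $\int_B \bar\psi(\nabla f_k)\, dx = \int \Psi\, d\gamma_{\textbf{T}_{f_k}}$, yields the pre-limit inequality $Q_k|B|\,\Psi(v)\le \int \Psi\, d\gamma_{\textbf{T}_{f_k}}$, which passes to the weak-$\ast$ limit using continuity of $\Psi$.

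The main obstacle lies in the backward direction and is twofold: first, carefully matching the normalization and boundary conditions so that the weighted Gaussian images $\gamma_{\textbf{T}_{f_k}}$ do indeed converge to the prescribed target $\mu$ with simple barycenter; and second, accommodating the fact that the multiplicity $Q_k$ must be allowed to vary with $k$. This last point is precisely what Theorem~\ref{thm:notapprox} will show to be unavoidable, and structurally explains why the hypothesis of the theorem requires quasiconvexity for \emph{every} $Q\in\N$ rather than for a single fixed $Q$.
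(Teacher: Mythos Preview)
The paper does not contain a proof of this statement: Theorem~\ref{thm:DLY} is quoted from \cite[Theorem~1.8]{DeRLeiYou} and stated without proof. There is therefore no ``paper's own proof'' to compare against. That said, the paper's introduction explicitly confirms that your backward-direction strategy is the right one: it states that \cite[Theorem~1.5]{DeRLeiYou} (the approximation of measures with simple barycenter by weighted Gaussian images of Lipschitz multigraphs, with $Q$ allowed to vary) ``played a central role'' in the proof of \cite[Theorem~1.8]{DeRLeiYou}. Your forward direction via Theorem~\ref{thm:dlsp} is also the natural route.

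One genuine gap in your sketch deserves attention. You write ``Define $\Psi$ on the image of $\bigwedge M$ by $\Psi(\bigwedge M(X)) := \psi(X)$ and extend by $1$-homogeneity to $(\Lambda^m\R^n)^+$.'' But the image of $\bigwedge M$, even after positive rescaling, consists only of \emph{simple} positively oriented $m$-vectors, not all of $(\Lambda^m\R^n)^+$; your extension is therefore not defined on non-simple vectors, and in particular it makes no sense to speak of its convexity on the full cone. The correct formulation of what you must show is that $\Psi$, defined only on positively oriented simple $m$-vectors, coincides there with its convex envelope over $(\Lambda^m\R^n)^+$; equivalently, that for every finite convex combination $\sum_i \lambda_i v_i$ of positively oriented unit simple $m$-vectors which is itself a positive multiple $c v$ of a unit simple $m$-vector, one has $c\,\Psi(v)\le \sum_i \lambda_i \Psi(v_i)$. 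This is the Jensen-type inequality you then reduce to measures and attack via approximation and quasiconvexity, and with this reformulation your outline is sound.
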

Intuitively, our Theorem~\ref{thm:qcvx_non_poly} indicates that Theorem~\ref{thm:DLY} is a peculiarity of ``geometric'' $Q$-integrands, and not extendable to general $Q$-integrands. One could hope to use our non-approximability Theorem~\ref{thm:notapprox} to show that Theorem~\ref{thm:DLY} fails if the value $Q$ is fixed, however there is a fundamental obstruction in doing so, that we discuss in Remark~\ref{rmk:anisotropic}.

\section{A non-approximable Grassmannian measure: Proof of Theorem~\ref{thm:notapprox}}\label{sec:nonapprox}
In the rest of the paper, for the sake of exposition we are going to assume $n=4$ and $m=2$, though all results generalize to any $n-2\geq m\geq 2$, as explained in Remark~\ref{rmk:proj-2}. We recall that we will often identify $\Lambda^2\R^4$ with the oriented Grassmannian $\widetilde{ Gr}(2,4)$. Moreover, for brevity we denote by $e_{ij}$ the elements $e_i\wedge e_j$ of the canonical basis of $\Lambda^2\R^4$.

The goal of this section is to prove Theorem~\ref{thm:notapprox}.
The proof is inspired by \cite[Theorem~4]{BurIva}, where Burago and Ivanov show how to construct a measure $\mu$ which cannot be approximated by means of smooth surfaces. 
We instead study the corresponding question with approximation by means of Lipschitz multigraphs. 

The measure will be defined as the sum of three atoms. The choice of the supporting $2$-vectors   is rather delicate, as we need them to satisfy several constraints: the barycenter must be parallel to $e_{12}$, the scalar products with $e_{12}$ need to be strictly positive, and they all need to have same norm in order to be able to use the identification of $\Lambda^2\R^4$ with $\widetilde{ Gr}(2,4)$. Indeed, this identification requires the $2$-vectors to be unitary, and in order to preserve the property of the barycenter, we need to renormalize the $2$-vectors with the same constant. 

On top of these requirements, in order to use the measure $\mu$ in combination with Proposition~\ref{prop:lower_bd} to prove Theorem~\ref{thm:notapprox}, we need to find a linear transformation of $\R^4$ that preserves the barycenter property and the positive orientation, but maps two of the $2$-vectors to be almost parallel to $e_{12}$ and one of them to be almost parallel to $e_{34}$. With all these constraints in mind, we make the following Ansatz, considering the simple 2-vectors:
\begin{equation}\label{eq:def_v}
    \begin{split}
        v_1 &=\bigl(\,\delta e_1 + \tfrac12\bigl(e_3+\varepsilon^2\delta e_2\bigr)\bigr)\wedge
      \bigl(\,\delta e_2 + \tfrac12\bigl(e_4+\varepsilon^2\delta e_1\bigr)\bigr),\\
v_2 &=\bigl(\,\delta e_1 - \tfrac12\bigl(e_3+\varepsilon^2\delta e_2\bigr)\bigr)\wedge
      \bigl(\,\delta e_2 - \tfrac12\bigl(e_4+\varepsilon^2\delta e_1\bigr)\bigr),\\
v_3 &=\tfrac12\bigl(e_4+\varepsilon^2\delta e_1\bigr)\wedge
      \bigl(e_3+\varepsilon^2\delta e_2\bigr),
    \end{split}
\end{equation}
where the parameters $\e,\delta>0$ will be chosen later, and we assume $\e$ to be small.
Note that, for every $\e,\delta>0$, the barycenter condition $v_1+v_2+v_3= 2\delta e_{12}$ is satisfied, since 
\[
v_1+v_2 = 2\delta^2 e_{12} + \tfrac12 (e_3 +\e^2 \delta e_2)\wedge ( e_4 + \e^2\delta e_1) = 2\delta^2 e_{12} -v_3.
\]
One can also check by an explicit calculation that $v_i$ for $i=1,2,3$ can be expressed in the form $\Lambda M (X_i)$ for $X_i\in\R^{2\times 2}$, up to a multiplicative constant. We check this on $v_1$, since for $v_2$ the computation is analogous and for $v_3$ it is trivial. We have 
\[
v_1 = \delta^2(1-\tfrac{\e^4}4) e_{12}  + \tfrac14 \e^2\delta (-e_{13}+e_{24}) +  \tfrac12 \delta  (e_{14} - e_{23})   + \tfrac 14 e_{34}. 
\]
We define
\[
X_1=\frac{1 }{\delta(4-\e^4)}\begin{pmatrix}
    2 & -\e^2 \\
    -\e^2  & 2
\end{pmatrix},
\]
so that computing $\Lambda M(X_1)$ yields 
\[
    \Lambda M(X_1) = e_{12} +\tfrac{\e^2 }{\delta(4-\e^4)} (-e_{13}+ e_{24})+ \tfrac{2}{\delta(4-\e^4)} (e_{14}-e_{23}) + \tfrac{ 1 }{\delta^2(4-\e^4)} e_{34},
\]
which coincides with $\tfrac{4 }{\delta^2(4-\e^4)} v_1$.
We now aim to find $\delta$ as a function of $\varepsilon$ such that:
\[
\|v_1\| = \|v_2\| = \|v_3\|,
\]
where the norm is the Euclidean norm induced on $\Lambda^2(\mathbb{R}^4)$ by the standard orthonormal basis $e_{ij}$.
Since the expressions of $v_1$ and $v_2$ differ only by the sign of symmetric terms, we have $
\|v_1\| = \|v_2\|$. 
Moreover we can explicitly compute:
\begin{equation}\label{norm1}
    \lVert v_1\rVert^{2}
  =\Bigl(1-\tfrac{\varepsilon^{4}}{4}\Bigr)^{2}\delta^{4}+\Bigl(\tfrac12+\tfrac{\varepsilon^{4}}{8}\Bigr)\delta^{2}
    +\tfrac1{16},
\end{equation}
and
\begin{equation}\label{norm3}
\lVert v_3\rVert^{2}
 =\tfrac14\bigl(\varepsilon^{8}\delta^{4}+2\varepsilon^{4}\delta^{2}+1\bigr).
\end{equation}
We can now equate \eqref{norm1} with \eqref{norm3}, and letting $t = \delta^2$, we get the quadratic equation:
\[
(16-8\varepsilon^{4}-3\varepsilon^{8})\,t^{2}
\;+\;(8-6\varepsilon^{4})\,t
\;-\;3=0,
\]
which admits two distinct real solutions if and only if the discriminant 
$$
\Delta=64\bigl(4-3\varepsilon^{4}\bigr)>0,
\qquad \Leftrightarrow \qquad 
0<\varepsilon<\left(\tfrac43\right)^{\frac 14}.
$$
On the same interval the second order coefficient
\(16-8\varepsilon^{4}-3\varepsilon^{8}\) is positive, hence we have exactly one positive root.

We conclude that, for every $0<\varepsilon<\left(\tfrac43\right)^{1/4}$, there exists a unique $\delta(\e) > 0$ such that $\|v_1\| = \|v_2\|  =\|v_3\|$ and the value is:
\[
    \delta(\e) = \sqrt{\frac{6\varepsilon^{4}-8
      +\sqrt{(8-6\varepsilon^{4})^{2}
             +12\bigl(16-8\varepsilon^{4}-3\varepsilon^{8}\bigr)}}
     {2\bigl(16-8\varepsilon^{4}-3\varepsilon^{8}\bigr)}
    }= \sqrt{\frac{-8 + 16 + O(\e^4)}{32+O(\e^4)}}= \frac12 + O(\e^4).
  \] 

As mentioned above, since $v_1$ and $v_2$ are not almost parallel to $e_{12}$, we will consider the linear transformation $R=diag(1,1,\e,\e)$, that maps $v_1$,$v_2$,$v_3$ respectively in
\begin{equation}\label{eq:def_w}
    \begin{split}
        w_1 &=\bigl(\,\delta e_1 + \tfrac12\bigl(\e e_3+\varepsilon^2\delta e_2\bigr)\bigr)\wedge
      \bigl(\,\delta e_2 + \tfrac12\bigl(\e e_4+\varepsilon^2\delta e_1\bigr)\bigr),\\
w_2 &=\bigl(\,\delta e_1 - \tfrac12\bigl(\e e_3+\varepsilon^2\delta e_2\bigr)\bigr)\wedge
      \bigl(\,\delta e_2 - \tfrac12\bigl(\e e_4+\varepsilon^2\delta e_1\bigr)\bigr),\\
w_3 &=\tfrac12\bigl(\e e_4+\varepsilon^2\delta e_1\bigr)\wedge
      \bigl(\e e_3+\varepsilon^2\delta e_2\bigr).
    \end{split}
\end{equation}
It is straightforward to compute the following norms and scalar product, which are going to be useful:
\begin{align}
    \langle w_1, e_{12} \rangle &=\delta^{2}\!\left(1-\frac{\varepsilon^{4}}{4}\right)= \frac{1}{4}+O(\e^4), \label{eq1}\\
    \|w_{1}\|^{2}
   &=\left(1-\frac{\varepsilon^{4}}{4}\right)^{2}\delta^{4}
     \;+\;\Bigl(\frac{\varepsilon^2}{2}+\frac{\varepsilon^{6}}{8}\Bigr)\delta^{2}
     \;+\;\frac{\varepsilon^{4}}{16} = \frac{1}{16} + O(\e^2),   \label{eq2}\\
     \langle w_3, e_{34} \rangle \;&=\; -\frac{\varepsilon^2}{2}, \quad  
\|w_3\|^2 \;=\;\frac14\bigl(\varepsilon^2+\varepsilon^{4}\delta^{2}\bigr)^{2}= \frac{\varepsilon^{4}}{4} + O(\e^6).   \label{eq3}
\end{align}

\begin{definition}\label{def:e}
 The oriented plane associated to $e_{12}$, respectively $e_{34}$, will be called horizontal (or $h$-plane), respectively  vertical (or $v$-plane). We let $P_H, P_V$ denote the orthogonal projection on the $h$-plane, $v$-plane respectively. An oriented plane $\pi\in \widetilde{Gr}(2,4)$ is said to be $\e$-horizontal (resp. $\e$-vertical) if $P_H\llcorner \pi $ (resp. $P_V\llcorner \pi$) is an orientation-preserving map, and its inverse is $(1+\e)$-Lipschitz.
\end{definition}

We explicitly observe that an $\e$-horizontal (resp. $\e$-vertical) oriented plane is positively oriented with respect to the oriented plane $e_{12}$ (resp. the oriented plane $e_{43}$).

The following lemma will be useful for checking Definition~\ref{def:e} on the $2$-vectors $w_i$ defined in \eqref{eq:def_w}:
\begin{lemma}\label{lem:horver}
Let \(v\) be a simple 2-vector in \(\mathbb{R}^4\) associated to an oriented plane $\pi$.

 If $v$ satisfies \begin{equation}\label{scalar}
        \langle v, e_{12} \rangle \geq (1 + \varepsilon)^{-1}\|v\|.
    \end{equation}
    then $\pi$ is $\e$-horizontal.     
    Analogously, if $v$ satisfies
    \begin{equation}\label{scalar2}
        \langle v, e_{43}\rangle \geq (1 + \varepsilon)^{-1}\|v\|,
    \end{equation}
    then $\pi$ is $\e$-vertical.
    \end{lemma}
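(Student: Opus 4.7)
The plan is to reduce the statement to an elementary singular-value estimate for the linear map $P_H|_\pi : \pi \to \mathrm{span}(e_1,e_2)$. Since both \eqref{scalar} and the defining conditions of $\e$-horizontal are invariant under positive rescaling of $v$, I may assume that $v$ is a unit simple $2$-vector, and fix a positively oriented orthonormal basis $\{v_1,v_2\}$ of $\pi$ with $v = v_1\wedge v_2$.

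The first step is to identify the oriented Jacobian of $P_H|_\pi$ with the scalar product $\langle v, e_{12}\rangle$. Expressing $P_H|_\pi$ in the frames $\{v_1,v_2\}$ and $\{e_1,e_2\}$, one has the standard identity
\[
\det(P_H v_1,\, P_H v_2) \;=\; \langle v_1\wedge v_2,\, e_1\wedge e_2\rangle \;=\; \langle v, e_{12}\rangle.
\]
By \eqref{scalar} this determinant is at least $(1+\e)^{-1}>0$, so $P_H|_\pi$ is orientation-preserving, matching the first requirement in Definition~\ref{def:e}.

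For the $(1+\e)$-Lipschitz bound on the inverse, I would next analyze the singular values $\sigma_1\ge\sigma_2\ge 0$ of $P_H|_\pi$. Since $P_H$ is the orthogonal projection onto a coordinate $2$-plane, it is $1$-Lipschitz on $\R^4$, hence $\sigma_1\le 1$. Combined with $\sigma_1\sigma_2 = |\langle v,e_{12}\rangle|\ge (1+\e)^{-1}$ from the previous step, this forces $\sigma_2\ge (1+\e)^{-1}$. Equivalently, $|P_H u|\ge (1+\e)^{-1}|u|$ for every $u\in\pi$, which is exactly the condition that $(P_H|_\pi)^{-1}$ is $(1+\e)$-Lipschitz. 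This also confirms invertibility onto the full $h$-plane, since the image has dimension $2$.

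The vertical claim under \eqref{scalar2} follows by the same argument, with $P_H$, $e_{12}$ and $\mathrm{span}(e_1,e_2)$ replaced by $P_V$, $e_{43}$ and $\mathrm{span}(e_4,e_3)$ respectively, the order of the target basis being chosen so that the oriented Jacobian of $P_V|_\pi$ equals $\langle v, e_{43}\rangle$. No real obstacle is expected; the only point requiring care is the bookkeeping of orientation conventions so that the positivity of $\langle v,e_{43}\rangle$ corresponds to $P_V|_\pi$ being orientation-preserving in the sense of Definition~\ref{def:e}.
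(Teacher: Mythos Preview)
Your argument is correct and essentially identical to the paper's: the paper phrases it in terms of principal angles $\theta_1,\theta_2$ and the diagonalized form $\mathrm{diag}(\cos\theta_1,\cos\theta_2)$ of $P_H|_\pi$, but these cosines are precisely your singular values $\sigma_1,\sigma_2$, and the key step $\sigma_1\le 1,\ \sigma_1\sigma_2\ge(1+\e)^{-1}\Rightarrow\sigma_2\ge(1+\e)^{-1}$ is the same in both versions.
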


\begin{proof}

 Without loss of generality we assume $\|v\|=1$. We will prove just the first statement, as the second one is equivalent, up to a rotation.

Let $\theta_{1}\le\theta_{2}\in[0,\pi/2)$ be the principal angles between the
oriented planes $v$ and $e_{12}$. 
Since \(v\) and \(e_{12}\) are unit simple 2-vectors, we have:
\begin{equation}\label{principal}
   \langle v, e_{12} \rangle = \cos \theta_1 \cos \theta_2,
\end{equation}
which combined with \eqref{scalar} implies that $\cos\theta_{1}\cos\theta_{2}\ge (1+\varepsilon)^{-1}.$ Recalling that $\cos(\theta_i)\in [0,1]$ for $i=1,2$, this in turn implies that $\cos(\theta_i)\ge (1+\varepsilon)^{-1}$  for $i=1,2$.

In an appropriate orthonormal basis the
restriction $P_H\llcorner \pi$ is represented by the diagonal matrix 
\[
\begin{pmatrix}\cos\theta_{1}&0\\0&\cos\theta_{2}\end{pmatrix}.
\]
Because $\cos\theta_{i}>0$, $P_H\llcorner \pi$ is invertible. Moreover
$
\|(P_H\llcorner \pi)^{-1}\|
    ={\min\{\cos\theta_{1},\cos\theta_{2}\}}^{-1}\leq 1+\e.
$
\end{proof}

Thanks to Lemma~\ref{lem:horver}, it is easy to observe that, for $\e$ small, the oriented plane associated with $w_3$ is $\e$-vertical, while those associated with $w_1,w_2$ are $\e$-horizontal. Indeed, by  \eqref{eq1},\eqref{eq2} and \eqref{eq3}, it holds 
\begin{equation}\label{orver}
    \langle w_3, e_{43}\rangle=\frac{\e^2}2 
\ge (1 + \varepsilon)^{-1}\|w_3\|, \quad 
\langle w_1, e_{12}\rangle= \frac{1}{4}+O(\e^4)
\ge (1 + \varepsilon)^{-1}\|w_1\|.
\end{equation}

\begin{remark}\label{rmk:proj}
    Given an $\e$-horizontal plane $\pi_1$, since the map $(P_H\llcorner \pi_1)^{-1}$
    is $(1+\e)$-Lipschitz, then the Jacobian satisfies $J(P_H\llcorner \pi_1)^{-1}\leq (1+\e)^2$.
    
    Given an $\e$-vertical plane $\pi_2$, then the map $P_H\llcorner \pi_2$ is $\sqrt{3\e}$-Lipschitz, and hence has Jacobian bounded by $3\e$. Indeed, for every \(x\in \pi_2\) we use the orthogonal decomposition
\begin{equation}\label{eq:PQdecomp}
|x|^{2}=|P_Hx|^{2}+|P_Vx|^{2}.
\end{equation}
Since $\pi_2$ is $\e$-vertical, we have that $(P_V\llcorner \pi_2)^{-1}$
    is $(1+\e)$-Lipschitz, which implies
\begin{equation}\label{eq:PxLower}
|P_Vx|\;\ge\;\frac{|x|}{1+\varepsilon}.
\end{equation}
Inserting \eqref{eq:PxLower} into \eqref{eq:PQdecomp} we obtain the desired bound:
\[
|P_Hx|^{2}
\;\le\;
|x|^{2}\Bigl(1-\tfrac1{(1+\varepsilon)^{2}}\Bigr)
\;=\;
|x|^{2}\cdot\frac{\varepsilon(2+\varepsilon)}{(1+\varepsilon)^{2}}\;\le\; 3\e |x|^2.
\]

Analogously, the map $(P_V\llcorner \pi_2)^{-1}$
    is $(1+\e)$-Lipschitz, hence the Jacobian satisfies $J(P_V\llcorner \pi_2)^{-1}\leq (1+\e)^2$, and the map $P_V\llcorner \pi_1$ is $\sqrt{3\e}$-Lipschitz, and hence has Jacobian bounded by $3\e$. 
\end{remark}

We now fix $f\in Lip(B,\A(\R^2))$ with $f\llcorner\bd B \equiv Q[0]$. 
We consider $\textbf{T}_f$ and in this section we drop the subscript $f$ in $\textbf{T}_f$ as the function $f$ will be fixed. 
Let spt$\mathbf{T}\subseteq \R^4$ denote the support of $\mathbf{T}$. Extend $f$ on $\R^2\setminus B $ as $Q[0]$, and consequently $\mathbf{T}$.
Let $T_0$ denote the subset of points $p\in \text{spt} \mathbf{T}$  of integer density for $\mathbf{T}$ admitting a tangent plane.
 
Note that $|(\text{spt}\mathbf{T})\setminus T_0|=0$ since $f$ is Lipschitz, see \cite[Theorem 1.13]{DeLSpa11}, and $T_p T_0$ is well-defined for every $p\in T_0$. 
We define  
\begin{equation}\label{parti}   
    \begin{split}
        &T_h:=\{ p\in T_0 : T_p T_0\ \text{is }  \e\text{-horizontal} \}, \quad T_v:=\{ p\in T_0 : T_p T_0\ \text{is }  \e\text{-vertical}\},\\
        &\mathbf{T}^H:= \textbf{T}\llcorner T_h, \qquad \mathbf{T}^V:=\textbf{T}\llcorner T_v, \qquad \mathbf{T}^M:=\mathbf{T}-\mathbf{T}^H- \mathbf{T}^V.
    \end{split}     
    \end{equation}
We notice that $\mathbf{T}^H$ and $\mathbf{T}^V$ are well-defined, since $\mathbf{T}$ is a rectifiable current and $T_h$ and $T_V$ are measurable subsets of $T_0$. Moreover, by definition the tangent planes in $\textbf{T}^V$ are positively oriented with respect to  the $v$-plane. 
Given $p\in \R^4$ and $r>0$, we also denote  $\textbf{B}^\square_r(p)=\mathbf{T}^\square\llcorner B_r(p)$, where the symbol $\square\in\{V,M,H\}$.

\begin{lemma}\label{lem:Q_dens_vert}
    There exists  $1\gg \beta=\beta(Q)>0$ with the following property. For $\e>0$ small enough in \eqref{parti}, if $\M(\textbf{B}_r^V(p))\ge \beta r^2$ and $\M(\textbf{B}_{2r}^V(p))\le \beta (2r)^2$, then $\M(\textbf{B}_{2r}^M(p))\ge \frac \beta2 r^2$. 
\end{lemma}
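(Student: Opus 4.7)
I would argue by contradiction, combining the $Q$-multigraph structure of $\mathbf{T}_f$ with an isoperimetric lower bound for the projection of $\mathbf{T}^V$ onto the vertical plane. So assume $\M(\mathbf{B}_{2r}^M(p)) < \frac{\beta}{2} r^2$.

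A key a priori fact I would first record is that the $Q$-multigraph hypothesis bounds the horizontal mass in a ball: since $(P_H)_\sharp (\mathbf{T}_f \llcorner B_{2r}(p))$ has multiplicity at most $Q$ on the disk $P_H(B_{2r}(p))$, and since the Jacobian of $P_H$ on any $\e$-horizontal plane is at least $(1+\e)^{-2}$ by Remark~\ref{rmk:proj}, the area formula yields $\M(\mathbf{T}^H \llcorner B_{2r}(p)) \le 16\pi Q r^2$ for $\e$ small. Combining this with the hypothesis $\M(\mathbf{B}_{2r}^V(p)) \le 4\beta r^2$ and the contradiction assumption, a Fubini/coarea argument produces a radius $\rho \in (r,2r)$ for which the integer-rectifiable slices satisfy simultaneously $\M(\langle \mathbf{T}^V, d_p, \rho\rangle) \lesssim \beta r$, $\M(\langle \mathbf{T}^M, d_p, \rho\rangle) \lesssim \beta r$ and $\M(\langle \mathbf{T}^H, d_p, \rho\rangle) \lesssim Q r$.

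Next I would study the pushforward $(P_V)_\sharp(\mathbf{T}^V \llcorner B_\rho(p))$. The decisive observation is that, by the explicit form of $\bigwedge M(Df_j)$ and Lemma~\ref{lem:horver}, every $\e$-vertical graph-tangent plane has $\det Df_j < 0$, so the pushforward is a negatively oriented 2-current in $\R^2_{34}$, namely
\begin{equation*}
(P_V)_\sharp(\mathbf{T}^V \llcorner B_\rho(p)) = -\, h \,\llbracket \R^2_{34} \rrbracket,
\end{equation*}
with $h : \R^2_{34} \to \mathbb{N}\cup\{0\}$ supported in the disk of radius $\rho \le 2r$ centered at $P_V(p)$. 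The Jacobian lower bound in Remark~\ref{rmk:proj} yields $\|h\|_{L^1} \ge (1+\e)^{-2}\M(\mathbf{B}_r^V(p)) \ge \frac{\beta r^2}{2}$ for $\e$ small. The $2$-dimensional Sobolev embedding $BV(\R^2) \hookrightarrow L^2(\R^2)$, combined with Hölder's inequality on the bounded support, then gives the isoperimetric lower bound
\begin{equation*}
\M\bigl(\partial(h\llbracket \R^2_{34}\rrbracket)\bigr) \,\ge\, c_0\,\frac{\|h\|_{L^1}}{\rho} \,\ge\, c_1\,\beta r,
\end{equation*}
for universal constants $c_0,c_1 > 0$ (the estimate is trivially valid if $h \notin BV$).

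To close the argument I would bound the same mass from above. Using $\partial \mathbf{T}_f = 0$ and the decomposition $\mathbf{T}_f = \mathbf{T}^V + \mathbf{T}^H + \mathbf{T}^M$, one rewrites
\begin{equation*}
\partial\bigl((P_V)_\sharp \mathbf{T}^V \llcorner B_\rho(p)\bigr) = (P_V)_\sharp \langle \mathbf{T}_f, d_p, \rho\rangle - \partial\bigl((P_V)_\sharp (\mathbf{T}^H + \mathbf{T}^M)\llcorner B_\rho(p)\bigr).
\end{equation*}
The total slice term has mass $\lesssim \beta r + \sqrt{\e}\,Q r$: the $\mathbf{T}^V$- and $\mathbf{T}^M$-pieces pass through the $1$-Lipschitz map $P_V$, while the $\mathbf{T}^H$-piece is compressed by a factor $\sqrt{3\e}$ since $P_V$ is $\sqrt{3\e}$-Lipschitz on every $\e$-horizontal plane (Remark~\ref{rmk:proj}). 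A parallel analysis of the boundary term shows that the $\mathbf{T}^H$ contribution is $O(\sqrt{\e}\,Q r)$ -- the $e_{34}$-coefficient of an $\e$-horizontal tangent plane equals $\det Df_j = O(\e)$ -- while the $\mathbf{T}^M$ contribution is controlled by $C\bigl(\M(\mathbf{T}^M\llcorner B_\rho(p))/r + \M(\langle\mathbf{T}^M,d_p,\rho\rangle)\bigr) \lesssim \beta r$ under the contradiction assumption. Choosing $\e$ small enough depending on $\beta$ and $Q$, the upper bound becomes strictly smaller than $c_1\beta r$, contradicting the isoperimetric inequality and proving the lemma. The main technical obstacle I expect is that $\partial \mathbf{T}^H$ and $\partial \mathbf{T}^M$, as distributions, may have infinite total mass; the point that rescues the argument is that after projection through $P_V$ their contributions inherit either the smallness of the $e_{34}$-coefficient on $\e$-horizontal planes or the mass bound on $\mathbf{T}^M$ itself, and this suffices to close the estimates.
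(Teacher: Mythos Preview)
Your overall plan---bound $\mathbf{T}^H$ using the $Q$-multigraph structure, project to the vertical plane, and invoke an isoperimetric inequality---matches the paper's. However, there is a genuine gap at the step where you bound $\M\bigl(\partial((P_V)_\sharp(\mathbf{T}^H+\mathbf{T}^M)\llcorner B_\rho)\bigr)$. The fact that the $e_{34}$-coefficient of an $\e$-horizontal tangent plane is $O(\e)$ controls the \emph{mass} of the $2$-current $(P_V)_\sharp(\mathbf{T}^H\llcorner B_\rho)$, not the mass of its \emph{boundary}: a $2$-current of arbitrarily small mass can have arbitrarily large boundary mass. The same objection applies to your $\mathbf{T}^M$ estimate; the quantity $\M(\mathbf{T}^M\llcorner B_\rho)/r + \M(\langle\mathbf{T}^M,d_p,\rho\rangle)$ does not dominate $\M(\partial(\mathbf{T}^M\llcorner B_\rho))$, because restricting $\mathbf{T}$ to the merely measurable set $T_0\setminus(T_h\cup T_v)$ creates an uncontrolled ``internal'' boundary along the interfaces with $T_h$ and $T_v$. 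Your closing caveat identifies exactly this obstacle, but the proposed rescue does not work: smallness of the $e_{34}$-component and smallness of $\M(\mathbf{T}^M)$ are statements about $2$-current mass and say nothing about $1$-dimensional boundary mass after projection.

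The paper sidesteps this by never splitting the current before taking the boundary. It pushes forward the \emph{whole} $\mathbf{T}\llcorner B_\rho$ via $P_V$; since $\partial\mathbf{T}$ lies in the $h$-plane, one gets $(P_V)_\sharp\partial(\mathbf{T}\llcorner B_\rho)=(P_V)_\sharp\langle\mathbf{T},d_p,\rho\rangle$, so only the slice appears and the internal interfaces never enter. The decomposition $\mathbf{T}=\mathbf{T}^V+\mathbf{T}^H+\mathbf{T}^M$ is used only to lower-bound the $2$-current mass $\M((P_V)_\sharp(\mathbf{T}\llcorner B_\rho))$, where the Jacobian estimates of Remark~\ref{rmk:proj} legitimately apply. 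A secondary issue: your Sobolev/H\"older isoperimetric gives a lower bound of order $\beta r$, while the upper bound you assemble is also of order $\beta r$ (plus $\sqrt{\e}Qr$), so closing the contradiction would hinge on a comparison of universal constants. The paper instead uses the scale-free isoperimetric inequality $\M(\partial S)^2\ge c\,\M(S)$ (via the level-set decomposition of \cite[Theorem~27.6]{Sim-book}), yielding a lower bound of order $\sqrt{\beta}\,r$; after integrating in $\rho\in(r,2r)$ this beats the $O(\beta)r^2$ upper bound for $\beta$ small.
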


Assuming Lemma~\ref{lem:Q_dens_vert}, we prove a global estimate on the mass of the vertical part.

\begin{proposition}\label{prop:lower_bd}
    For $\e>0$ small enough in \eqref{parti}, there exists a universal constant  $C>0$ such that  $\M(\mathbf{T}^M)\ge C \M(\mathbf{T}^V).$
\end{proposition}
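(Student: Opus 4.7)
The plan is to combine Lemma~\ref{lem:Q_dens_vert} with a Vitali $5r$-covering argument, transferring the pointwise mass comparison into a global one. The key observation is that at $\H^{2}$-almost every $p \in T_v$ there is a \emph{canonical critical scale} $r_p$ at which both hypotheses of Lemma~\ref{lem:Q_dens_vert} are automatically met; summing the resulting local mixed-mass estimates over a disjoint Vitali sub-family then yields the global inequality.

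For each $p$ in the full $\H^2$-measure subset of $T_v$ where the approximate tangent plane and a positive integer density of $\mathbf{T}^V$ exist, I define
\[
r_p := \sup\bigl\{\, r > 0 \,:\, \M(\textbf{B}_r^V(p)) \ge \beta r^2 \,\bigr\}.
\]
I would first check that $0 < r_p < \infty$. The inequality $r_p > 0$ follows from $\M(\textbf{B}_r^V(p))/r^2 \to \pi\theta(p) \ge \pi > \beta$ as $r \to 0^+$, for $\beta$ small (as permitted by Lemma~\ref{lem:Q_dens_vert}); the inequality $r_p \le \sqrt{\M(\mathbf{T}^V)/\beta}$ follows from $f\llcorner \bd B \equiv Q[0]$ together with the Lipschitz bound, which forces $\M(\mathbf{T}^V) < \infty$. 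Since $r \mapsto \M(\textbf{B}_r^V(p))$ is the mass of an open ball for the Radon measure $|\mathbf{T}^V|$ and is therefore left-continuous, one obtains $\M(\textbf{B}_{r_p}^V(p)) \ge \beta r_p^2$. Moreover, every $r > r_p$ satisfies $\M(\textbf{B}_r^V(p)) < \beta r^2$, and letting $r \nearrow 2 r_p$ yields $\M(\textbf{B}_{2r_p}^V(p)) \le \beta (2 r_p)^2$. Lemma~\ref{lem:Q_dens_vert} thus applies at scale $r_p$ and produces $\M(\textbf{B}_{2r_p}^M(p)) \ge \tfrac{\beta}{2} r_p^2$.

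Next, I would apply Vitali's $5r$-covering theorem to the family $\{B_{2 r_p}(p)\}_p$, whose radii are uniformly bounded by $2\sqrt{\M(\mathbf{T}^V)/\beta}$, extracting a countable disjoint subfamily $\{B_{2 r_i}(p_i)\}_i$ (with $r_i := r_{p_i}$) whose $5$-enlargements $\{B_{10 r_i}(p_i)\}$ still cover $T_v$ up to an $\H^2$-null set. Summing over this disjoint family and using the pointwise bound from Lemma~\ref{lem:Q_dens_vert} gives
\[
\M(\mathbf{T}^M) \ge \sum_i \M(\textbf{B}_{2 r_i}^M(p_i)) \ge \tfrac{\beta}{2} \sum_i r_i^2.
\]
On the other hand, since $\mathbf{T}^V$ is concentrated on the covered set and $10 r_i > r_{p_i}$, the defining property of $r_{p_i}$ yields $\M(\textbf{B}_{10 r_i}^V(p_i)) < \beta (10 r_i)^2 = 100\beta r_i^2$, so that
\[
\M(\mathbf{T}^V) \le \sum_i \M(\textbf{B}_{10 r_i}^V(p_i)) \le 100 \beta \sum_i r_i^2.
\]
Combining the two estimates gives $\M(\mathbf{T}^V) \le 200\, \M(\mathbf{T}^M)$, which is the desired inequality with $C = 1/200$.

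The step I expect to be most delicate is the measure-theoretic bookkeeping around $r_p$: the left-continuity-based verification of both hypotheses of Lemma~\ref{lem:Q_dens_vert} at the precise scale $r_p$, the $\H^2$-almost-everywhere positivity of $r_p$ (from the rectifiable structure of $\mathbf{T}^V$ and the existence of positive density), and the uniform upper bound on $r_p$ required for Vitali. Once these points are under control, the covering-and-summation is routine, and the constants $\beta$ and $\e$ are fixed exactly as in Lemma~\ref{lem:Q_dens_vert}.
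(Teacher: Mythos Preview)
Your proposal is correct and follows essentially the same approach as the paper: define the critical scale $r_p$ as the supremal radius where the $\beta$-density lower bound holds, apply Lemma~\ref{lem:Q_dens_vert} at that scale, run Vitali's $5r$-covering on $\{B_{2r_p}(p)\}$, and sum. Your treatment is in fact slightly more careful than the paper's on the measure-theoretic verification that both hypotheses of Lemma~\ref{lem:Q_dens_vert} hold at the exact scale $r_p$ (via left-continuity of $r\mapsto\M(\textbf{B}_r^V(p))$), whereas the paper simply asserts that the ``maximum'' is attained; note, however, that since $2r_p>r_p$ the second hypothesis $\M(\textbf{B}_{2r_p}^V(p))\le\beta(2r_p)^2$ follows directly from the definition of the supremum without any limiting argument.
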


\begin{proof}
	We can assume without loss of generality that $\M(\mathbf{T}^V)>0$.  
    The proof follows from a covering argument and Lemma~\ref{lem:Q_dens_vert}.
    Fix $0<\beta \ll 1$ and $\e$ small as in Lemma~\ref{lem:Q_dens_vert}. 
    For every $p\in  T_v$   
    let $r(p)$ be the maximum radius $r$ such that $\M(\textbf{B}^V_{r}(p))\ge \beta r^2$. 
    Note that $r(p)$ is uniformly bounded from above (since spt$\textbf{T}^V$ is bounded as $f$ is Lipschitz) and, since $p$ is a density point,     it is also strictly  positive.

    By Vitali's 5-times covering lemma \cite[Lemma 3.4]{Sim-book} applied to the family $\{B_{2r(p)}(p) :p\in  T_v\}$, 
    we find a countable set  $\mathcal I$ of indices and a sub-family $\{ B_{2r_i}(p_i) \}_{i\in \mathcal{I}}$ of balls, where $r_i:=r(p_i)$, such that  
    \begin{equation}\label{eq:cover}
         \bigcup_{i\in \mathcal{I}} B_{2r_i}(p_i) = \bigcup_{p\in  T_v} B_{10r(p)}(p)\quad\text{ and }\quad   B_{2r_i}(p_i)\cap B_{2r_j}(p_j)=\emptyset,
    \end{equation}
     By definition of $r(p)$ it holds   $\M(\textbf{B}^V_{\rho}(p))\le \beta \rho^2$ for every $\rho>r(p)$. In particular,  we  can apply Lemma~\ref{lem:Q_dens_vert}  to the balls $\textbf{B}^V_{r_i}(p_i)$ and obtain 
    \[
    \M(\textbf{B}^M_{2r_i}(p_i))\ge \frac \beta 2 r_i^2\ge \frac1{200} \M(\textbf{B}^V_{10 r_i}(p_i)),
    \]
    where the second inequality follows from the choice of the radii in the cover.
    Using \eqref{eq:cover} we conclude $    \M(\mathbf{T}^M)\ge  \frac1{200} \M(\mathbf{T}^V ).$ 
\end{proof}

\begin{proof}[Proof of Lemma~\ref{lem:Q_dens_vert}] 
    In the proof we let $c$ denote a positive constant depending only on $Q$, whose value may change from line to line. We also assume wlog $p=0$, since no assumptions on $p$ have been made.  Assume by contradiction that
    \begin{equation}\label{eq:contr_hp}
        \M(\textbf{B}_{2r}^M)\le \frac \beta 2\pi r^2.
    \end{equation}
    \noindent\textit{Step 1: We estimate from above the mass of the horizontal part of the graph. }

     We start recalling that the projection $P_H$ restricted to an $\e$-horizontal plane is a bijection with inverse being an orientation-preserving $(1+\e)$-Lipschitz map, see Definition~\ref{def:e}.

     Fix $\rho\in(r,2r)$. By the formula for the pushforward of a current \cite{Sim-book}, for every $\eta \in \mathscr D^2(\R^2)$ it holds 
    \begin{equation}\label{eq:form_pushf}
        (P_H)_\sharp \textbf{B}^H_\rho(\eta) =  \int_{T_h\cap B_\rho} \langle \eta(P_H(x)), (dP_H(x))_\sharp \omega(x)  \rangle d \mu_{\textbf{T}}(x),
    \end{equation}
    where $\omega$ is the orientation of $\textbf{T}$ and  $\mu_\textbf{T}$ is the surface measure associated to the current $\textbf{T}$. Since on $T_h$ the tangent planes to $\textbf{T}$ are all $\e$-horizontal, one can estimate thanks to Remark~\ref{rmk:proj} $|(dP_H(x))_\sharp \omega(x)|\ge (1+\e)^{-2}\ge 1-2\e$ for every $x\in T_h$. Thus, since by duality $\sup_{|\eta|\le 1, \eta\in\mathscr D^2(\R^2)} \langle \eta, \omega\rangle = |\omega| $ for every vector field $\omega:\R^2\to \Lambda^2(\R^2)$, by the previous estimate and taking the   $\sup_{|\eta|\le 1, \eta \in \mathscr D^2(\R^2)}$ in \eqref{eq:form_pushf} we get   
    \[
    \M((P_H)_\sharp \textbf{B}^H_\rho) \ge   (1-2\e)\int_{T_h \cap B_\rho}   d \mu_{\textbf{T}}(x) = (1-2\e)\M(\textbf{B}^H_\rho).
    \]
    Note that we do not have cancellations in $(P_H)_\sharp \textbf{B}^H_\rho$, as all the tangent planes in $\textbf{T}^H$ are positively oriented with respect to  the $h$-plane. Recalling that $\textbf{T}$ is the current associated to the $Q$-valued graph of $f$, we deduce that $\textbf{B}^H_\rho$ projects onto the $h$-plane with multiplicity at most $Q$. We conclude that 
    \begin{equation}\label{eq:Q_upper_bd}
    Q\pi\rho^2 \ge \M((P_H)_\sharp \textbf{B}^H_\rho) \ge   (1-2\e)\M(\textbf{B}^H_\rho).        
    \end{equation}

\noindent\textit{Step 2: We estimate from below the mass of the vertical projection of the graph. }

    By the standing  assumption on $\M(\textbf{B}^V_r)$, we can argue as  in Step 1 to show that 
    for every $\rho\in [r,2r]$ it holds 
    \begin{equation}\label{eq:est_V_below}
        \M((P_{V})_\sharp  \textbf{B}_\rho^V) \ge \frac 34 \beta r^2.
    \end{equation}
    Indeed,   we recall   that the projection $P_V$ restricted to an $\e$-vertical plane is a bijection with inverse being an orientation-preserving $(1+\e)$-Lipschitz map, see Definition~\ref{def:e}.  By the formula for the pushforward of a current we have
    \[
    (P_V)_\sharp \textbf{B}^V_\rho(\eta) =  \int_{T_v\cap B_\rho} \langle \eta(P_V(x)), (dP_V(x))_\sharp \omega(x)  \rangle d \mu_{\textbf{T}}(x),
    \]
    where $\omega,\mu_\textbf{T}$ are as above. Since on $T_v$ the tangent planes to $\textbf{T}$ are all $\e$-vertical, by  Remark~\ref{rmk:proj} one can estimate   $|(dP_V(x))_\sharp \omega(x)|\ge (1+\e)^{-2}\ge 1-2\e$ for every $x\in T_v$. Thus, taking the $\sup$ over forms in $\mathscr D^2(\R^2)$ with norm bounded by 1 and using the assumption of the Lemma, we conclude \eqref{eq:est_V_below} as follows:
    \[
    \M((P_V)_\sharp \textbf{B}^V_\rho) \ge   (1-2\e)\M(\textbf{B}^V_\rho) \ge (1-2\e)\frac \beta 2\pi r^2\ge \frac34 \beta r^2
    \] 
    by taking $\e $ smaller than a universal constant.
    
    Furthermore, we have 
    \[
    \begin{split}
    & \M((P_{V})_\sharp  \textbf{B}_\rho^M)\le \M(\textbf{B}^M_{\rho})\le \M(\textbf{B}^M_{2r})\le \frac \beta {2} r^2  \\
    &  \M((P_{V})_\sharp \textbf{B}_\rho^H)\le 3\e \M(\textbf{B}^H_r)\le 3\e \M(\textbf{B}^H_{2r})\le c\e  r^2, 
    \end{split}
    \]
    where in the last line we used again the formula for the mass of the push-forward and that the Jacobian  of $ P_V$ restricted to a $\e$-horizontal plane is bounded by $3\e$, see Remark~\ref{rmk:proj}. This, combined with $\mathbf{T}\llcorner B_\rho-\textbf{B}_\rho^M-\textbf{B}_\rho^H=\textbf{B}_\rho^V$, the subadditivity of the mass and the linearity of the pushforward,  yields 
    \begin{equation}\label{eq:bound_M}
    \M((P_{V})_\sharp  (\mathbf{T}\llcorner B_\rho)) \ge    \M((P_{V})_\sharp  \textbf{B}_\rho^V) - \M( (P_V)_\sharp \textbf{B}_\rho^M) - \M((P_{V})_\sharp  \textbf{B}_\rho^H)\ge r^2(\tfrac \beta 4-c\e),        
    \end{equation}  
    and the right-hand side is positive taking $\e$ small. 

\noindent\textit{Step 3: We bound from below the mass of the mixed part, deriving a contradiction with \eqref{eq:contr_hp}.}

     For a.e. $\rho \in (r,2r)$, we have that $\M(\bd (P_V)_\sharp  ( \mathbf{T}\llcorner B_\rho))$ is finite. 
     This, combined with the fact that $(P_V)_\sharp  ( \mathbf{T}\llcorner B_\rho)$ is a $2$-dimensional integer current in the $2$-dimensional $v$-plane, allows to apply the decomposition result \cite[Theorem  27.6]{Sim-book} to $(P_V)_\sharp  ( \mathbf{T}\llcorner B_\rho)$.

     We consider the current $(P_V)_\sharp  ( \mathbf{T}\llcorner B_\rho)$. Denoting $\tilde \theta $ the (non-negative) density of such current, we define $\theta$ as the signed multiplicity of the current with orientation fixed to $e_{43}$, i.e. $\theta=\tilde \theta \, \omega_{(P_V)_\sharp \mathbf{T}}\cdot e_{43}$, and we extend it to zero on the whole $v$-plane outside spt$(P_V)_\sharp  ( \mathbf{T}\llcorner B_\rho)$.
     We consider the following $2$-dimensional finite perimeter sets in the $v$-plane:
    \[
    U_j:=\{\theta \ge j\}, \quad  V_j:=B_R\setminus U_j=\{\theta \le j-1\}, \quad \mbox{ for $j\in\Z$},
    \]
    where $B_R$ is a large ball containing the support of $(P_V)_\sharp  ( \mathbf{T}\llcorner B_\rho)$.
    Hence $\partial U_j\subset\joinrel \subset B_R$ for every $j\geq 1$ and $\partial V_j\subset\joinrel \subset B_R$ for every $j\leq 0$. Moreover, up to choosing $R$ even bigger, we can guarantee that
    $$
      |U_j|\le |V_j| \; \mbox{ for every $j\ge 1$, \quad \qquad and \quad \qquad }  |U_j|\ge |V_j| \; \mbox{ for every $j\le 0$}.
    $$
    In particular, denoting with $P_j=P(U_j,B_R)=P(V_j,B_R)$ the relative perimeter of the set $U_j$ in $B_R$, we conclude from the isoperimetric inequality that
\begin{equation}\label{bigiso}
 P_j^2\ge c|U_j| \; \mbox{ for every $j\ge 1$, \quad \qquad and \quad \qquad }  P_j^2\ge c|V_j| \; \mbox{ for every $j\le 0$}.
\end{equation}
    By \cite[Theorem~27.6]{Sim-book}  it holds: 
    $$
    \bd ((P_V)_\sharp  ( \mathbf{T}\llcorner B_\rho))=\sum_{j\geq 1} \bd \llbracket U_j\rrbracket - \sum_{j\leq 0} \bd \llbracket V_j\rrbracket, \quad \mbox{ and } \quad \M(\bd (P_V)_\sharp  ( \mathbf{T}\llcorner B_\rho))=\sum_{j\in \Z} P_j,
    $$ 
 Moreover\footnote{We recall that the characteristic function $\chi_E$ of a set $E$ is defined as $\chi_E(x)=1$ if $x\in E$ and $\chi_E(x)=0$ if $x\notin E$.}
 $$\M((P_V)_\sharp  ( \mathbf{T}\llcorner B_\rho))=\int_{B_R}\sum_{j\ge 1}\chi_{\{\theta\ge j\}} +\chi_{\{\theta\le -j\}}\ud \mathcal H^2=\sum_{j\ge 1}|U_j|+\sum_{j\le 0}|V_j|.$$ 
 Combining the previous two displayed equations with \eqref{bigiso}, we thus estimate
    \begin{equation}\label{eq:per_bd}
        \M(\bd (P_V)_\sharp  ( \mathbf{T}\llcorner B_\rho))^2 \ge\sum_{j\in\Z} P_j^2 \ge c \left( \sum_{j\ge 1}  |U_j| + \sum_{j\le 0} |V_j| \right) =  c \,\M((P_V)_\sharp  ( \mathbf{T}\llcorner B_\rho)).
    \end{equation}

    Let $L^\square_r(p)= \M(\langle \mathbf{T}^\square,\ud_p,\rho\rangle)$, for $\square\in\{H,V,M\}$, be the mass of the slice of $\textbf{T}^\square$ by the (Euclidean) distance function to $p$. By the coarea formula \cite[Lemma  28.5]{Sim-book}
    it holds 
    \begin{equation}\label{coarea}
        \int_r^{2r} L^\square_\rho(p)\ud\rho \leq  \M(\textbf{B}^\square_{2r}(p)).
    \end{equation} 
    Moreover we explicitly observe that, since $\mathbf{T}=\mathbf{T}_f$, then $\partial \mathbf{T}=Q\llbracket \partial B\rrbracket$, which is entirely contained in the $h$-plane. Hence $(P_{V})_\sharp (\bd \mathbf{T})=0$ and we deduce that
    \begin{equation}\label{bdr}
        (P_{V})_\sharp (\bd (\mathbf{T}\llcorner B_\rho))=(P_{V})_\sharp ((\bd \mathbf{T})\llcorner B_\rho)+\langle \mathbf{T},\ud_p,\rho\rangle)=(P_{V})_\sharp \langle \mathbf{T},\ud_p,\rho\rangle.
    \end{equation}
    Combining \eqref{eq:bound_M}, \eqref{eq:per_bd},  \eqref{bdr}, and using the sublinearity of the mass, we get
    \begin{multline}\label{usefulhighcod}
        c r \sqrt{\beta/4-c\e} \le\M( \bd {P_V}_\sharp (\mathbf{T}\llcorner B_\rho) ) =\M((P_{V})_\sharp (\bd (\mathbf{T}\llcorner B_\rho)))=\M((P_V)_\sharp  \langle \mathbf{T}, \ud_p,\rho \rangle ) \\
        \le \sum_{\square\in\{V,H,M\}} \M((P_V)_\sharp  \langle \mathbf{T}^\square, \ud_p,\rho \rangle ) 
       \le  L^V_\rho+ L^M_\rho + \sqrt{3\e} L^H_\rho,
    \end{multline}
    where in the last line we used again the fact that $P_V$ restricted to an $\e$-horizontal plane is $\sqrt{3\e}$-Lipschitz (as proved in Remark~\ref{rmk:proj}) and reasoned as in \eqref{eq:Q_upper_bd}.
    Integrating this inequality  from $r$ to $2r$ and using  \eqref{coarea}, we get 
    \[  
    \M(\textbf{B}^V_{2r}) + \M(\textbf{B}^M_{2r}) + \sqrt{3\e} \M(\textbf{B}^H_{2r})\ge c r^2 \sqrt{\beta/4-c\e}. 
    \]
    On the other hand, by \eqref{eq:Q_upper_bd} we have $ \M(\textbf{B}^H_{2r})\le c r^2$, and by assumption $\M(\textbf{B}^V_{2r})\le 4\beta r^2.$ Therefore, for $\beta, \e$ small enough, we arrive at 
    \[  
    \M(\textbf{B}^M_{2r})\ge r^2 (c\sqrt{\beta/4-c\e}-c\sqrt\e-4\beta) \ge \frac \beta2 r^2,
    \]
    which contradicts \eqref{eq:contr_hp} and thus concludes the proof.
\end{proof}

\begin{remark}\label{rmk:proj-2}
    We remark that Theorem~\ref{thm:qcvx_non_poly}, and hence the consequent Theorem~\ref{thm:notapprox},
 can be generalized to elements in $\widetilde{Gr}(m,n)$ for $n-2\ge m\geq 2$. Indeed, in this case, for any    $\pi_1\in\widetilde{Gr}(m,n)$ there exists an orthogonal $m$-plane $\pi_2\in\widetilde{Gr}(m,n)$ such that $\pi_1\cap \pi_2$ is at most $(m-2)$-dimensional. Thus, by replacing the $h$-plane and $v$-plane with $\pi_1,\pi_2$, one can still obtain \eqref{bdr}, and one can show analogous bounds for the Jacobians as in Remark~\ref{rmk:proj}.
 Although the Lipschitz bounds of the vertical projection of an $\e$-horizontal plane will hold just on a $2$-dimensional subspace of the $\e$-horizontal plane, this is enough to deduce the required smallness of the Jacobian and in the estimate \eqref{usefulhighcod}. For the latter, we observe that $\langle \textbf{T}^H,\text{d}_p,\rho \rangle $ will now be an $(m-1)$-dimensional integer current, hence the tangential Jacobian will always be small, as every tangent $(m-1)$-plane of $\langle \textbf{T}^H,\text{d}_p,\rho \rangle $ will have at least one direction which is orthogonal to $\pi_1\cap \pi_2$.
 On the contrary, this  argument clearly fails in the codimension-one case  $\widetilde{Gr}(n-1,n)$.
 This is not a surprise, as in codimension one Minkowski existence Theorem \cite{Sch} guarantees that weighted Gaussian images of Lipschitz graphs are dense in the space of positive measures on $\widetilde{Gr}(n-1,n)$. Hence Theorem~\ref{thm:notapprox} fails, and the notions of polyconvexity and quasiconvexity coincide with the notion of convexity \cite{Dac-book}. In particular, Theorem~\ref{thm:qcvx_non_poly} fails as well. 
\end{remark}

With the previous results in hand we are able to prove Theorem~\ref{thm:notapprox}.

\begin{proof}[Proof of Theorem~\ref{thm:notapprox}]
    We first show a similar result for an atomic measure supported on the $2$-vectors $w_1,w_2,w_3$ defined in \eqref{eq:def_w}, and then conclude the  proof of the theorem via an affine transformation.
Throughout the proof, we will denote with $P_{v_i},P_{w_i}\in \widetilde{Gr}(2,4)$ the oriented 2-plane associated to $v_i$, $w_i$ for $i=1,2,3.$
    
    Consider the atomic measure on $\widetilde{Gr}(2,4)$ defined by
     $$\mu_0:=\sum_{i=1}^3 \|w_i\|  [P_{w_i}].$$
    Consider $U_V\subseteq {\widetilde{\text{Gr}}}^+(2,4)$ the interior of the set of $\e$-vertical planes and $U_M$ the complement of the $\e$-horizontal and $\e$-vertical planes. Note that, since $P_{w_1},P_{w_2}$ are $\e$-horizontal and $P_{w_3}\in U_V$ as computed in \eqref{orver}, it holds   $\mu_0(U_M)=0$ while $\mu_0(U_V)>0$. For a given $Q$-valued Lipschitz map $f:B \to \mathcal A_Q(\R^2)$ with $f\llcorner {\bd B }\equiv   Q[0]$, let $\textbf{T}_f$ be the  $Q$-valued graph associated to $f$ and denote with 
    $\gamma_{\textbf{T}_f}$  the weighted Gaussian image of $\textbf{T}_f$.  Then Proposition~\ref{prop:lower_bd} implies that $\gamma_{\textbf{T}_f}(U_M)\ge c\gamma_{\textbf{T}_f}(U_V)$, thus $\mu_0$ cannot be approximated by Gaussian images of the form $\gamma_{\textbf{T}_f}.$
    
    Then consider the atomic measure $\mu$ on $\widetilde{Gr}(2,4)$:
    $$\mu := \sum_{i=1}^3 \|v_i\|[P_{v_i}] =\|v_1\| \sum_{i=1}^3 [P_{v_i}].$$
     Note that the barycenter of the measure $\mu$ is the horizontal plane. We aim to show that $\mu$ cannot be approximated by Gaussian images of Lipschitz $Q$-valued maps, which would conclude the proof of the theorem.
     
     Let $R={diag}(1,1,\e,\e)$, identified with the linear map $\R^4\to \R^4$, which we recall satisfies $R(P_{v_i})=P_{w_i}$ for $i=1,2,3$. More precisely, recalling that $v_i=u_i^1\wedge u_i^2$, with $u_i^1,u_i^2\in \R^4$ defined in \eqref{eq:def_v}, and the definition of $w_i$ in \eqref{eq:def_w}, we have
     $$
     w_i=(Ru_i^1)\wedge (Ru_i^2).
     $$  
   This in particular implies that the tangential Jacobian of the linear map $R$ with respect to $P_{v_i}$ is:  \begin{equation}\label{tangentialjacobian}
   J_{P_{v_i}}R=\frac{\|(Ru_i^1)\wedge (Ru_i^2)\|}{\|u_i^1\wedge u_i^2\|}=\frac{\|w_i\|}{\|v_i\|}.
   \end{equation}

   In the following, we warn the reader that we will compute push-forwards sometimes in the sense of currents and sometimes in the sense of measures. There is no risk of confusion, as for every instance the correct interpretation depends on the object we push-forward. This will also determine whether we obtain tangential Jacobians in the equations.
   
   For every rectifiable current $\textbf{T}=(T,\theta\omega_P)$, we claim that 
    \begin{equation}\label{eq:pushf-gauss}
        \gamma_{ R_\sharp \textbf{T}} =   \, R_\sharp \gamma_{(J_TR) \textbf{T}},
    \end{equation}
    where $J_TR$ denotes the tangential Jacobian of the linear map $R$ on $T$. 
    Indeed, recall that $R_\sharp \textbf{T}=(R(T), \, (\theta\circ R^{-1}) \omega_{R(P\circ R^{-1})} )$. 
    By the definition of Gaussian image and the area formula we have 
    \begin{multline*}
        \gamma_{ R_\sharp    \textbf{T}} =  (R(P\circ R^{-1}))_\sharp ((\theta\circ R^{-1})\, \H^2\llcorner{R(T)})= (R(P\circ R^{-1}))_\sharp  (R_\sharp ((J_TR)\,  \theta\, \H^2\llcorner{T})  )=  \\
        =      R_\sharp \Big(((P\circ R^{-1})\circ R)_\sharp  ((J_TR)\, \theta\, \H^2\llcorner{T}) \Big)
        = R_\sharp P_\sharp ((J_TR)\,   \theta\, \H^2\llcorner{T}) =  R_\sharp \gamma_{(J_TR)\,  \textbf{T}}. 
    \end{multline*}
    Moreover, in the case of a current $\textbf{T}_f$ associated to the $Q$-valued graph of $f$, it holds     \begin{equation}\label{eq:gauss_graph}
    R_\sharp \textbf{T}_f= \textbf{T}_{\e f}
    \end{equation}
    Indeed, if 
    $f=\sum_{j=1}^Q[f_j]$ is a local selection of $f$ at $y\in B$ then $\text{spt}\textbf{T}_f$ is locally around $y$ given by  $\bigcup_{j=1}^Q \{(y,f_j(y))\}\subseteq\R^4$ and its tangent space (defined for a.e. $y\in B$) is given by the union of the planes identified by $\Lambda M(\nabla f_j(y))$. Thus, the tangent planes in $R_\sharp\textbf{T}_f$ are locally the union of planes associated to the 2-vectors $\Lambda M(\nabla (\e f))$ and thus one can explicitly compute $R_\sharp \textbf{T}_f$.
    
   Finally, by the very definition of push-forward, for every $P\in \widetilde{Gr}(2,4)$   it holds 
   \begin{equation}\label{eq:gauss_delta}
       R_\sharp[P]= [R(P)].
   \end{equation}   
    We now assume by contradiction that there exists a sequence of Lipschitz functions   $f^k:B \to \mathcal A_Q(\R^2)$ with $f^k\llcorner {\bd B }\equiv  Q[0]$ and such that 
    $$\gamma_{\textbf{T}_{f^k}}\rightharpoonup \mu = \sum_{i=1}^3 \|v_1\|[P_{v_i}].$$
    In turn, by \eqref{tangentialjacobian} this implies that 
    $$
    \gamma_{(J_{T_{f^k}}R) \, \textbf{T}_{ f^k}} \rightharpoonup   \sum_{i=1}^3 \|v_1\| \cdot (J_{P_{v_i}}R) \,  [P_{v_i}]=  \sum_{i=1}^3 \|v_i\| \cdot (J_{P_{v_i}}R) \,  [P_{v_i}]=\sum_{i=1}^3 \|w_i\|  [P_{v_i}],
    $$
    where the first convergence holds since the tangential Jacobian is continuous as the tangent plane varies continuously.
    Combining this convergence with \eqref{eq:pushf-gauss}, \eqref{eq:gauss_graph} and \eqref{eq:gauss_delta}, we deduce the following contradiction with the non-approximability of $\mu_0$:
  $$
       \gamma_{ \textbf{T}_{\sqrt\e f^k}} =  \gamma_{ R_\sharp\textbf{T}_{f^k}} =    R_\sharp \gamma_{(J_{T_{f^k}}R) \,\textbf{T}_{f^k}} \rightharpoonup  R_\sharp \big( \sum_{i=1}^3 \|w_i\|  [P_{v_i}]\big ) = \sum_{i=1}^3 \|w_i\| R_\sharp [P_{v_i}]= \sum_{i=1}^3 \|w_i\|  [P_{w_i}]=\mu_0.
$$   
\end{proof}

\section{A non-polyconvex $Q$-integrand defining a lower semicontinuous energy}\label{sec:quasi/poly}
The aim of this section is to prove Theorem \ref{thm:qcvx_non_poly}. In the rest of the paper, we adopt the following convention: indices of sequences are written as superscripts (e.g., $u^k$), while indices of elements of decompositions are written as subscripts (e.g., $a_j$).

\subsection{Definition of the candidate integrand $A$}
We consider the 2-vectors $v_1,v_2,v_3$ (of equal norm) introduced in \eqref{eq:def_v} and define $\psi :\R^{2\times 2}\to \R$ as follows
\[
\begin{cases}
    \psi(X)=0\quad &\text{if $\Lambda M(X)$ is positively proportional to $v_i$ for some $i=1,2,3,$}\\[1ex]
    \psi (X)=\|\Lambda M(X)\| \quad &\text{otherwise.}
\end{cases}
\]
Note that $\Lambda M(X)$ is positively proportional to $v_i$ for some $i=1,2,3,$ if and only if $\Lambda M(X)=\lambda\Lambda M( X_i)$ for some $i=1,2,3$ and $\lambda>0$, where the matrices $X_i$ have been introduced in Section~\ref{sec:nonapprox}.

\begin{definition}
    Given $p\in \A(\R^\ell)$ for $\ell\in\N$, we define a maximal decomposition of $p$ as:
    \[
    p=\sum_{j=1}^J q_j [p_j], \quad \text{with }p_j\neq p_i \text{ for every } j\neq i.
    \]
    The tuple $(q_1,\dots,q_J)$ is unique up to permutations. We  say that two points $p,p'\in\A(\R^\ell)$ have the same maximal multiplicities if they admit a maximal decomposition of the form 
    \[
    p=\sum_{j=1}^J q_j [p_j]\quad  \text{ and } \quad p'=\sum_{j=1}^J q_j[p_j'].
    \]
\end{definition}

\begin{definition}
    Consider $(a,X)\in\A(\R^2\times \R^{2\times 2})$ with maximal decomposition $ \sum_{j=1}^J q_j [(a_j,X_j)]$. For a given bounded  open set $U\subseteq \R^2$ we let
    \begin{equation}\label{def:tau(X)}
    \mathcal T(a,X,U):=\sum_{j=1}^J \tau_{(a_j+X_j\,\cdot) }W^{1,\infty}_0(U,\mathcal A_{q_j}(\R^2))
    \end{equation}
    denote the family of all the functions $\varphi\in Lip( U,\A(\R^2))$ which admit a decomposition $\varphi=\sum_{j=1}^J \varphi_j$ with $\varphi_j\in Lip( U,\mathcal A_{q_j}(\R^2))$ and satisfy 
    \[
    \varphi_j\llcorner\bd U =  q_j [a_j+X_j\,\cdot ]\llcorner\bd U.
    \]
    In particular, for every $j=1,\dots,J$ it holds  $\varphi_j\in \mathcal T(q_j[(a_j,X_j)],U)$.
\end{definition}

Consider the following  $Q$-integrand,  in the sense of Definition~\ref{def:Q-integr}, defined on $\A(\R^2\times \R^{2\times 2})$:
\begin{equation}\label{eq:quasi_envel}
    A(a,X):=\inf\left\lbrace \fint_{D}\bar \psi(\nabla \varphi)\,  :\, \varphi\in \mathcal T(a,X,D)   \right\rbrace,
\end{equation} 
where we recall that $\bar \psi(Y)=\sum_{i=1}^Q \psi(Y_i)$ if $Y=\sum_{i=1}^Q [Y_i]\in \A(\R^{2\times 2}).$

 Note that it is not clear whether $A$ can be written in the form  $\bar\psi'
$ for a $1$-integrand $\psi':\R^{2\times 2}\to \R$. Thus,  $A$ does not fall in the framework studied in \cite{DeRLeiYou}. 
It would be interesting to show that $A$ (or a suitable modification) falls in the framework considered in \cite{DeRLeiYou}.  We discuss this in more detail in Remark \ref{rmk:anisotropic}.

\begin{remark}\label{rmk:nidepend}
     We  note  that the value of $A$ is invariant under translations and rescalings of the domain $D$, that is, for every $(a,X)\in\A(\R^2\times \R^{2\times 2}), x_0\in\R^2$ and $\lambda>0$ it holds 
    \begin{equation*}
        A(a, X)=\inf\left\lbrace \fint_{x_0+\lambda D}\bar \psi(\nabla \varphi)\,  :\, \varphi\in \mathcal T(a,X,x_0+\lambda D)   \right\rbrace.
    \end{equation*}    
    Indeed, if $f\in W^{1,\infty}_0(x_0+\lambda D,\A)$  then  $\tilde f:= \lambda^{-1}  f(x_0+\lambda \cdot) \in W^{1,\infty}_0(D,\A)$. A change of variable in \eqref{eq:quasi_envel}  allows to conclude.
\end{remark}

\begin{remark}
    Thanks to the definition of the family $\mathcal T$, it is possible to link the definition \eqref{eq:quasi_envel} with the classical Dacorogna's formula for the quasiconvex envelope of a functional, see \cite{Dac-book}. Indeed, for a given $(a,X)\in\A(\R^2\times \R^{2\times 2})$ with maximal decomposition $(a,X)=\sum_{j=1}^Jq_j[(a_j,X_j)]$, we can write 
    \begin{equation*}
    A(a,X):=\inf\left\lbrace \fint_{D} \sum_{j=1}^J \bar \psi\Big( \tau_{X_j} \nabla \varphi_j \Big)\,  :\,   \varphi_j\in W^{1,\infty}_0(D,\mathcal A_{q_j})\mbox{ for } j=1,\dots,J  \right\rbrace.
\end{equation*} 
The dependence of the integrand on $a$ is hidden into the choice of the maximal multiplicity for $(a,X).$
\end{remark}

\begin{remark}\label{rmk:dep_A}
    For any $(a,X)\in\A(\R^2\times \R^{2\times 2})$ the value of $A$ is constant in the subset of points $(b,X)\in\A(\R^2\times \R^{2\times 2})$ having the same maximal multiplicities  of $(a,X)$. Indeed, consider   $b\in(\R^2)^Q$ such that $(a,X),(b,X)$ admit maximal decompositions $\sum_{j=1}^J q_j[(a_j,X_j)]$ and $\sum_{j=1}^J q_j[(b_j,X_j)]$ respectively. Then for every $j=1,\dots,J,$ it holds $\tau_{(b_j-a_j)}q_j[a_j+X_j\,\cdot]=q_j[b_j+X_j\,\cdot]$. Thus, every element in $\mathcal T(a,X,D)$ differs from an element in $\mathcal T(b,X,D)$ by translations of elements of its maximal decomposition.  Since $\psi$ depends only on the gradient variable, by \eqref{eq:quasi_envel} we conclude that $A(a,X)=A(b,X).$
\end{remark}

\begin{remark}\label{rmk:split_A}
    For every $q\le Q$, let  $A^q$ be  the functional defined as in \eqref{eq:quasi_envel} on $\mathcal A_{q}$. Consider $(a,X)\in\A(\R^2\times \R^{2\times 2})$ with maximal decomposition  $\sum_{j=1}^J q_j[(a_j,X_j)] $. Since in the definition of $\mathcal T(a,X,D)$ the selections of each map in $\tau_{(a_j+X_j\,\cdot) }W^{1,\infty}_0(U,\mathcal A_{q_j}(\R^2))$  are decoupled and the extension $\bar\psi$ is linear, it holds 
    \begin{equation}\label{eq:A_split}
        A(a,X)=\sum_{j=1}^J A^{q_j}(q_j[(a_j,X_j)]).
    \end{equation}
\end{remark}

\subsection{Basic properties of the integrand $A$}

We are going to use the following interpolation result, in the spirit of \cite[Lemma 2.15]{DeLSpa11}.  

\begin{lemma}\label{lem:interp}
    There exists a constant $C(Q)>0$ with the following property. Let $r>0$,  $\sigma\in(0,1)$ and consider $g\in Lip(\bd D_{(1+\sigma)r},\A)$ and $f\in Lip(\bd D_r,\A)$   with  Lipschitz constant $L_f,L_g$ respectively. Then there exists 
    $h\in Lip( D_{(1+\sigma)r}\setminus D_r,\A)$ with Lipschitz constant $L_h$ such that $ h\llcorner\bd D_{(1+\sigma )r}=g,h\llcorner\bd D_r=f $ and
    \begin{equation}\label{eq:Lip_h}
        L_h\le C \Big(L_f+L_g+ \frac1{\sigma r} \sup_{x\in\bd D_r}\G(f(x),g((1+\sigma)x))\Big).
    \end{equation}

\end{lemma}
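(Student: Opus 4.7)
The plan is to construct $h$ by radial linear interpolation of optimally paired selections of $f$ and $g$, and then bound its Lipschitz constant by decoupling the radial and tangential variations of the argument.

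I parametrize the annulus $D_{(1+\sigma)r}\setminus D_r$ through the radial projection $\pi(y):=ry/\|y\|_\infty\in\partial D_r$ and the scaling factor $\lambda(y):=\|y\|_\infty/r\in[1,1+\sigma]$, so that $y=\lambda(y)\pi(y)$. Both maps are Lipschitz on the annulus with dimensional constants; in particular $|\lambda(y_1)-\lambda(y_2)|\le |y_1-y_2|/r$ and $|\pi(y_1)-\pi(y_2)|\le C|y_1-y_2|$. At each $x\in\partial D_r$, I pick labellings $f(x)=\sum_{i=1}^Q[F_i(x)]$ and $g((1+\sigma)x)=\sum_{i=1}^Q[G_i(x)]$ realizing an optimal pairing, meaning $\sum_i|F_i(x)-G_i(x)|^2=\G(f(x),g((1+\sigma)x))^2$, and set
\[
h(y):=\sum_{i=1}^Q\Bigl[\,(1-t(y))F_i(\pi(y))+t(y)G_i(\pi(y))\,\Bigr],\qquad t(y):=\frac{\lambda(y)-1}{\sigma}\in[0,1].
\]
By construction $h$ equals $f$ on $\partial D_r$ and $g$ on $\partial D_{(1+\sigma)r}$.

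To estimate $\G(h(y_1),h(y_2))$, I split via the intermediate point $y^*:=\lambda(y_2)\pi(y_1)$, which still lies in the annulus and satisfies $\pi(y^*)=\pi(y_1)$, $\lambda(y^*)=\lambda(y_2)$. The \emph{radial} term $\G(h(y_1),h(y^*))$ compares two interpolations at the common base $\pi_1:=\pi(y_1)$ built with the same pairing $(F_i(\pi_1),G_i(\pi_1))$, so the common labelling may be used directly and
\[
\G(h(y_1),h(y^*))\le \frac{|\lambda(y_1)-\lambda(y_2)|}{\sigma}\G(f(\pi_1),g((1+\sigma)\pi_1))\le \frac{|y_1-y_2|}{\sigma r}\sup_{x\in\partial D_r}\G(f(x),g((1+\sigma)x)).
\]

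The main obstacle is the \emph{tangential} term $\G(h(y^*),h(y_2))$, in which the two interpolations sit above the distinct bases $\pi_1,\pi_2:=\pi(y_2)$ with a common parameter $t=t(y_2)$ but with potentially different optimal pairings. I would handle it by lifting the pair to the auxiliary map $\Psi:\partial D_r\to \mathcal A_Q(\R^2\times\R^2)$, $\Psi(x):=\sum_i[(F_i(x),G_i(x))]$, and proving that $\Psi$ is Lipschitz with constant $\le C(L_f+(1+\sigma)L_g)$. The technical content is that although the optimal pairing can switch with $x$, a switch can occur only where two components of $f$ or of $g$ nearly collide, so the two candidate pairings differ by a swap of nearly coinciding components and $\Psi$ remains Lipschitz, in the spirit of the measurable selection results of \cite[Lemma~1.1]{DeLSpa15}. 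Since $h(y)$ is obtained from $\Psi(\pi(y))$ by applying the affine map $(F,G)\mapsto(1-t)F+tG$ componentwise, which is $1$-Lipschitz in the pair $(F,G)$ for every $t\in[0,1]$, we get $\G(h(y^*),h(y_2))\le C(L_f+(1+\sigma)L_g)|\pi_1-\pi_2|\le C(L_f+(1+\sigma)L_g)|y_1-y_2|$. Combined with the radial estimate, this yields \eqref{eq:Lip_h}.
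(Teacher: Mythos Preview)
Your construction has a genuine gap: the ``optimally paired'' lift $\Psi(x)=\sum_i[(F_i(x),G_i(x))]$ is \emph{not} Lipschitz, and in fact not even continuous, in general. The claim that the optimal pairing can switch only where components of $f$ or of $g$ nearly collide is false. Take $Q=2$, $f\equiv[(1,0)]+[(-1,0)]$ and $g((1+\sigma)x)=[(\cos\theta,\sin\theta)]+[(-\cos\theta,-\sin\theta)]$ with $\theta=\theta(x)$ varying through $\pi/2$. The two pairings have costs $4(1\mp\cos\theta)$, so the optimal pairing flips at $\theta=\pi/2$, yet at that point both components of $f$ and both components of $g$ are at distance $2$ from each other. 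The two candidate values of $\Psi$ differ by $2\sqrt{2}$ in $\mathcal A_2(\R^4)$, and the two resulting values of $h$ at interpolation parameter $t=1/2$ differ by $\sqrt{2}$ in $\mathcal A_2(\R^2)$. So $h$ is not well-defined by your recipe, and no measurable selection repairs this: any choice produces a jump. The reference to \cite[Lemma~1.1]{DeLSpa15} does not help here, since that lemma gives only measurable (not Lipschitz) selections.

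The paper avoids this issue entirely by working through Almgren's bi-Lipschitz embedding $\bm{\xi}:\A\to\R^N$ and the Lipschitz retraction $\bm{\rho}:\R^N\to\bm{\xi}(\A)$: one interpolates linearly in $\R^N$ between Lipschitz extensions of $\bm{\xi}\circ f$ and $\bm{\xi}\circ g$, and then applies $\bm{\xi}^{-1}\circ\bm{\rho}$. Since the interpolation takes place in a linear space, no pairing is ever chosen and the Lipschitz estimate is a direct computation. This is precisely the situation for which the embedding/retraction machinery exists, and it is the clean fix for your argument.
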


\begin{proof}
    By a scaling argument we can consider $r=1$. We denote $\psi= \bm {\xi}\circ  g$ and $\varphi=\bm{\xi}\circ f$, where $\bm \xi:\A\to \R^N$ is  Almgren's bi-Lipschitz embedding, and consider the retraction  $\bm \rho:\R^N\to\bm\xi(\A)$.  Denote by $L_\psi,L_\varphi$ the Lipschitz constant of $\psi,\varphi$ respectively. Consider two Lipschitz extensions $\bar \psi,\bar \varphi$ to the whole $\R^2$ of the functions $ \psi, \varphi$ respectively, which can be chosen to have  Lipschitz constants $L_\psi, L_\varphi$.    
    For every $x\in D_{1+\sigma}\setminus D$, we define
    \begin{equation}\label{eq:def_interp}
        \Phi(x)=\tfrac{\|x\|_{\infty}-1}\sigma \bar \psi(x)+ (1-\tfrac{\|x\|_{\infty}-1}\sigma) \bar\varphi(x)
    \end{equation}
    and then $h=\bm \xi^{-1}\circ \bm \rho\, \circ \Phi$. It is immediate to check that $h\llcorner\bd D=f, h\llcorner\bd D_{(1+\sigma)}=g$, we only need to bound the Lipschitz constant of $h$. Since   $\bm{\xi}$ is biLipschitz and $\bm{\rho}$ is  a retraction, it is enough to bound the Lipschitz constant of $\Phi$.
   
    For  $x\in\R^2$ we  denote $  x_{in} \in \bd D$ and $  x_{out} \in \bd D_{1+\sigma}$ the projection of $x$ on $\bd D$, and on $\bd D_{1+\sigma}$, respectively. For every $x,y\in  D_{1+\sigma}\setminus D$,  we estimate 
    \begin{multline*}
    |\Phi(x) -\Phi(y)| = \Big| \tfrac{\|x\|_{\infty}-1}\sigma (\bar \psi(x)-\bar \psi(y)) + \tfrac{\|x\|_{\infty}-\|y\|_{\infty}}\sigma \bar \psi(y) +  (1-\tfrac{\|x\|_{\infty}-1}\sigma)  (\bar \varphi(x)-\bar \varphi(y)) - \tfrac {\|x\|_{\infty}-\|y\|_{\infty}}\sigma) \bar \varphi(y) \Big| \\
    \le \tfrac{\|x\|_{\infty}-1}\sigma L_\psi|x-y|+(1-\tfrac{\|x\|_{\infty}-1}\sigma) L_\varphi|x-y|+ \Big| \tfrac{\|y\|_{\infty}-\|x\|_{\infty}}\sigma(\bar \psi(y)-\bar \varphi(y))   \Big|       \\
    \le \Big( L_\psi+L_\varphi \Big)|x-y| + \frac{|x-y|}\sigma  \Big|  \bar \psi(y)-\bar \psi(y_{out}) + \bar \psi(y_{out})- \bar \varphi(y_{in}) + \bar \varphi(y_{in})  -\bar \varphi(y)      \Big|\\
    \le  \Big( L_\psi+L_\varphi \Big)|x-y| + \frac{|x-y|}\sigma ((L_\psi+L_\varphi) \sqrt 2\,\sigma  + \max_{z\in\bd D}|\psi((1+\sigma)z)-\varphi(z)|)\\
    \le C\Big( L_g+L_f+ \frac1\sigma\max_{z\in\bd D}\G(f(z),g((1+\sigma)z)) \Big)|x-y|,
    \end{multline*}
where in the last line the constant $C>0$ depends on Almgren's biLipschitz embedding.

\end{proof}

We now study  the continuity properties of $A$. Crucially, our integrand $A$ turns out to be, in general, only lower semicontinuous and not continuous.
\begin{lemma}\label{lem:A_cont}
    The integrand $A$ defined in \eqref{eq:quasi_envel} is lower semicontinuous on $\A(\R^2\times \R^{2\times 2})$. Moreover, for every   sequence $\{(a^k,X^k)\}_{k\in\N}$ of points in $\A(\R^2\times \R^{2\times 2})$ with the same maximal multiplicities of $(a,X)\in \A(\R^2\times \R^{2\times 2})$ such that $\G((a^k,X^k),(a,X))\to 0$ as $k\to +\infty$, it holds 
    \begin{equation}
        \label{cont}
    \lim_{k\to +\infty} A(a^k,X^k)=A(a,X).
    \end{equation}
    Furthermore, the integrand $A$ satisfies for some $C(Q)>0$ 
    \begin{equation}\label{eq:bound_0}
        A(a,X)\le C(1+\|X\|^2).
    \end{equation}  
\end{lemma}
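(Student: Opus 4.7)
I would split the proof into three steps: the growth bound, lower semicontinuity on all of $\mathcal A_Q(\R^2\times \R^{2\times 2})$, and then the upper-semicontinuity statement \eqref{cont} along sequences with the same maximal multiplicities.

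The growth bound \eqref{eq:bound_0} is the easiest: writing $(a,X)=\sum_{j=1}^J q_j[(a_j,X_j)]$ in maximal form and taking as competitor the affine map $\varphi(x)=\sum_j q_j[a_j+X_j x]\in\mathcal T(a,X,D)$, the gradient is the constant $Q$-point $\sum_j q_j[X_j]$. From the explicit formula for the minors one has $\|\Lambda M(Y)\|^2=1+\|Y\|^2+(\det Y)^2\le C(1+\|Y\|^4)$, so $\psi(Y)\le C(1+\|Y\|^2)$ and therefore $\bar\psi(\nabla\varphi)\le C(Q)(1+\|X\|^2)$.

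For the lower semicontinuity $A(a,X)\le\liminf_k A(a^k,X^k)$, passing to a subsequence I may assume the maximal decompositions of $(a^k,X^k)$ stabilize into $\sum_{j=1}^J q_j[(a^k_j,X^k_j)]$, and that for every $j$ the sheet $(a^k_j,X^k_j)$ converges to a sheet $(b_{\ell(j)},Y_{\ell(j)})$ of $(a,X)=\sum_l p_l[(b_l,Y_l)]$, with $\sum_{j\in\ell^{-1}(l)}q_j=p_l$. By the splitting \eqref{eq:A_split} the problem reduces, cluster by cluster, to showing
\[
A^{p_l}(p_l[(b_l,Y_l)])\le\liminf_{k\to\infty}\sum_{j\in\ell^{-1}(l)}A^{q_j}(q_j[(a^k_j,X^k_j)]).
\]
Fix one cluster; up to relabeling, $(a,X)=Q[(a_0,X_0)]$ and $\sum_j q_j=Q$. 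Given $\delta>0$, pick $\psi_j^k\in\mathcal T(q_j[(a_j^k,X_j^k)],D)$ whose energy is within $\delta$ of the infimum, and form $\Psi^k=\sum_j\psi_j^k\in\text{Lip}(D,\A)$. On the shrunken cube $(1-\sigma_k)D$ I set $\varphi^k$ equal to the rescaled/translated copy of $\Psi^k$ that preserves the gradient pointwise and has boundary datum $\sum_j q_j[a_j^k+X_j^ky]$ on $\partial((1-\sigma_k)D)$; on the annulus $D\setminus(1-\sigma_k)D$ I use Lemma~\ref{lem:interp} to interpolate between this datum and the target $Q[a_0+X_0y]$ on $\partial D$. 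The $\G$-distance between matched points on the two boundaries is bounded by $\max_j|a_j^k-a_0|+\max_j|X_j^k-X_0|+C\sigma_k$, so choosing $\sigma_k=\sqrt{\max_j|a_j^k-a_0|+\max_j|X_j^k-X_0|}\to 0$ the lemma yields an interpolation with Lipschitz constant $O(1)$. The growth bound then gives an annulus contribution of size $O(\sigma_k)\to 0$, while a change of variables produces $(1-\sigma_k)^2\sum_j(A^{q_j}(q_j[(a_j^k,X_j^k)])+\delta)$ on the interior. Letting $k\to\infty$ and $\delta\to 0$ gives the claim.

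For the continuity \eqref{cont} along sequences with matching maximal multiplicities, the liminf direction is already settled, so I only need to produce an upper bound. By \eqref{eq:A_split} I may again reduce to $(a,X)=Q[(a_0,X_0)]$ and $(a^k,X^k)=Q[(a_0^k,X_0^k)]\to(a,X)$. Given $\delta>0$, pick a near-optimal $\varphi\in\mathcal T(Q[(a_0,X_0)],D)$; on $(1-\sigma_k)D$ place the rescaled/translated copy of $\varphi$ whose boundary datum on $\partial((1-\sigma_k)D)$ is $Q[a_0+X_0y]$, and on the annulus interpolate sheetwise (which is immediate, since both data have full multiplicity $Q$) to $Q[a_0^k+X_0^ky]$. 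With $\sigma_k=\sqrt{|a_0^k-a_0|+|X_0^k-X_0|}$, the interpolation has $L^\infty$ gradient $O(1)$, contributes $O(\sigma_k)$ to the energy, and on the interior gives $(1-\sigma_k)^2\fint_D\bar\psi(\nabla\varphi)$. Hence $\limsup_k A^Q(Q[(a_0^k,X_0^k)])\le A^Q(Q[(a_0,X_0)])+\delta$, and letting $\delta\to 0$ concludes \eqref{cont}. The main technical obstacle is controlling the Lipschitz constant of the annular interpolation in the lower semicontinuity step: the inner boundary datum involves $J$ different affine sheets while the outer one is $Q$ copies of a single affine sheet, and one must balance the annulus thickness $\sigma_k$ against the $\G$-distance of the two data so that the term $\frac{1}{\sigma_k r}\sup\G$ in \eqref{eq:Lip_h} remains bounded; the square-root scaling of $\sigma_k$ is precisely what makes this work.
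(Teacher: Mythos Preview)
Your argument is correct and rests on the same core mechanism as the paper: glue a near-optimal competitor for one point to the affine boundary datum of the other via the annular interpolation of Lemma~\ref{lem:interp}, and use the growth bound to show the annulus contribution vanishes. The one genuine difference is that you first invoke the splitting formula \eqref{eq:A_split} to reduce both the lower-semicontinuity and the continuity step to the full-multiplicity case $(a,X)=Q[(a_0,X_0)]$. The paper instead treats a general maximal decomposition directly, interpolating sheet-by-sheet (its $h^k_j$) so that the glued map admits the required $J$-fold decomposition and hence lies in $\mathcal T(a,X,D_{1+\sigma_k})$. Your reduction sidesteps that bookkeeping entirely, since for a single full-multiplicity sheet the class $\mathcal T$ imposes no decomposition constraint; this is a clean simplification.

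Two small points to tighten. First, ``the rescaled/translated copy of $\Psi^k$ that preserves the gradient pointwise'' is not quite coherent: a genuine rescaling of the domain does not preserve gradients. What you actually need (and what your later computation $(1-\sigma_k)^2\sum_j(A^{q_j}+\delta)$ uses) is the scale invariance of $A$ from Remark~\ref{rmk:nidepend}, which lets you choose the near-optimal $\psi_j^k$ directly on $(1-\sigma_k)D$. Second, the square-root choice $\sigma_k=\sqrt{\epsilon_k}$ is not ``precisely what makes this work'': the paper takes $\sigma_k=\mathcal G((a^k,X^k),(a,X))$ linearly, and since the boundary mismatch is itself $O(\sigma_k(1+\|X\|))$, the quotient in \eqref{eq:Lip_h} is already $O(1+\|X\|)$ without any square root.
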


\begin{proof}
    Fix $(a,X)\in \A(\R^2\times \R^{2\times 2})$ with maximal decomposition $(a,X)=\sum_{j=1}^J q_j[(a_j,X_j)]$. We first observe that for every $f\in Lip(D,\R^2)$ of Lipschitz constant $L_f$, then 
    \begin{equation}\label{eq:bound_0_psi}
    \int_D \psi(\nabla f)\le \int_D\| \Lambda M(\nabla f) \|\le C\int_D(1+\|\nabla f\|^2)\le C(1+L_f^2),
    \end{equation}
    where $C>0$ is a dimensional constant. 
    In particular,      \eqref{eq:bound_0} follows from this estimate  testing the definition of $A$ with $\sum_{j=1}^J q_j[a_j+X_j\,\cdot]$.

    Consider then a sequence $(a^k,X^k)$ in $\A(\R^2\times \R^{2\times 2})$ converging to $(a,X)$. For  $k$ large enough (depending on $\min_{j\neq i}|(a_j,X_j)-(a_i,X_i)|>0$) there exists a maximal decomposition of $(a^k,X^k)$ of the following form
    \begin{equation}\label{eq:split_an}
    (a^k,X^k)= \sum_{j=1}^{J^k} q^k_j[(a^k_j,X^k_j)],
    \end{equation}
    where $J^k\ge J$ and (up to a permutation) there exists indices $i_0=0,i_1,\dots,i_{J-1},i_J=J^k$ such that  $q^k_{i_{j-1}+1}+\dots+q^k_{i_j}=q_j$  for all $j=1,\dots, J$. 
    
    Set  $\sigma_k:=\G((a,X),(a^k,X^k))\to 0$ as $k\to +\infty$ and 
     denote $u_0:=\sum_{j=1}^J q_j[a_j+X_j\,\cdot]$ and $u_k:=\sum_{j=1}^{J^k} q_j^k[a_j^k+X_j^k\,\cdot]$.     
    In particular we get that $\|X^k\|\le \|X\|+\sigma_k$, and  
    \begin{equation}\label{eq:cont_est0}
        \G(u_0(x), u_k(x))\le \sigma_k, \qquad \forall x\in \bd D.
    \end{equation}
    For every $j=1,\dots ,J$, let $h^k_j\in Lip(  {D_{1+\sigma_k}}\setminus D, \mathcal A_{q_j})$ denote the interpolation produced by Lemma~\ref{lem:interp} between
    \begin{equation}\label{eq:interp_bds}
            q_j[a_j+X_j\,\cdot]\llcorner\bd D_{1+\sigma_k} \quad \text{and}\quad  \sum_{i=i_{j-1}+1}^{i_j} q^k_i [a^k_i+X^k_i\,\cdot]\llcorner\bd D,
    \end{equation}
    and denote $h^k=\sum_{j=1}^J h^k_j$. By \eqref{eq:cont_est0} and the triangular inequality, for $x\in\bd D$  it holds 
    $$\G(u_k(x), u_0((1+\sigma_k )x))\le \sigma_k + \sigma_k \|X\|. $$
    Therefore, denoting $L_k$ the Lipschitz constant of $h^k$, the estimate  \eqref{eq:Lip_h} implies 
    \begin{equation}\label{eq:lip_interp}
        L_k \le  C\Big( 1+ \|X\| \Big).
    \end{equation}

    For $\delta>0$ let   $\varphi^k\in \mathcal T(a^k,X^k,D)$ be such that $A(a^k,X^k)\ge \fint_D\bar\psi(\nabla \varphi^k)-\delta$. 
    By definition of $\mathcal T$, the function $\varphi^k$ admits a decomposition   $\varphi^k=\sum_{i=1}^{J^k}\varphi^k_i, $    with $\varphi^k_i\in Lip(D,\mathcal A_{q^k_i})$. 
    In particular, by \eqref{eq:split_an} the function  $\varphi^k$ admits a decomposition
    \[
    \varphi^k= \sum_{j=1}^J  \tilde \varphi^k_j
    \quad \mbox{ where }  \quad \sum_{i=i_{j-1}+1}^{i_j}\varphi^k_i =:\tilde \varphi^k_j\in Lip(D,\mathcal A_{q_j}) \mbox{ for every $j=1,\dots, J$}.\]
    We define  the following function on $D_{1+\sigma_k}$
    \[
    \eta^k=\sum_{j=1}^J ( h^k_j\chi_{D_{1+\sigma_k}\setminus D} + \tilde \varphi^k_j \chi_{D}).
    \]
    By  \eqref{eq:lip_interp}, the function $\eta^k$ is Lipschitz  and satisfies $\eta^k\llcorner\bd D_{1+\sigma_k}=u_0\llcorner\bd D_{1+\sigma_k}$, thus  $\eta^k\in\mathcal T(a,X,D_{1+\sigma_k}).$ 
    By the choice of $\varphi^k$,  \eqref{eq:bound_0_psi} and \eqref{eq:lip_interp}, we can estimate 
 \begin{multline}\label{eq:comp_lsc_A}
        A(a^k,X^k)\ge   \fint_D \bar\psi(\nabla \varphi^k)-\delta = |D_{1+\sigma_k}|\fint_{D_{1+\sigma_k}}\bar \psi(\nabla \eta^k) - \int_{D_{1+\sigma_k}\setminus D}\bar \psi (\nabla h^k)-\delta\\
        \ge \fint_{D_{1+\sigma_k}}\bar \psi(\nabla \eta^k) - C|D_{1+\sigma_k}\setminus D| \, (1+L_k^2)  -\delta     \ge \fint_{D_{1+\sigma_k}}\bar \psi(\nabla \eta^k) - C\sigma_k(1+\|X\|^2) -\delta.       
    \end{multline}
    Since $\eta^k\in \mathcal T(a,X, D_{1+\sigma_k})$, by the definition of $A$  and Remark~\ref{rmk:nidepend}, we get 
    \[
    A(a^k,X^k)\ge A(a,X) -C\sigma_k(1+\|X\|^2)- \delta .
    \]
    We conclude the lower semicontinuity of $A$ by letting $k\to +\infty$ and using that $\delta$ is arbitrary.

    The continuity of $A$ on the subset of $\A(\R^2\times \R^{2\times 2})$ consisting of points with the same maximal multiplicities of  $(a,X)$ follows essentially the same argument as before. The only difference is that, since the maximal multiplicities of $(a,X)$ and $(a^k,X^k)$ now coincide, we can interpolate between $u_0$ on $D$ and $u_k$ on $D_{1+\sigma_k}$ by means of a map $h^k$, which  admits a decomposition into $J$ maps in $\mathcal A_{q_1},\dots,\mathcal A_{q_J}$ respectively. This allows  to reproduce \eqref{eq:comp_lsc_A}, starting from $A(a,X)$, to establish the reverse inequality and thus  the desired \eqref{cont}.

\end{proof}

\begin{remark}\label{rmk:A_not_C0}
    We observe that the continuity of the integrand $A$ is not expected in general. For instance $A(Q[(0,0)])$ can be in general strictly smaller than the constant value $A(a,0)$ evaluated on all $(a,0)\in \A(\R^2\times \R^{2\times 2})$ with the same maximal multiplicities $(q_1,\dots,q_J)$, if $J>1$. Indeed the family of test functions in \eqref{eq:quasi_envel} for $A(Q[0,0])$ is strictly larger than the one for $A(a,0)$, up to a translation. And it is easy to approximate $(0,0)\in \A(\R^2\times \R^{2\times 2})$ with a sequence $(a^k,0)$ as above.
    
    This obstruction also occurs if one tries to prove upper semicontinuity of $A$, following the proof of Lemma~\ref{lem:A_cont}, but reversing the roles of the maps in \eqref{eq:interp_bds}. More precisely, consider the case where the maximal decomposition of $(a,X),(a^k,X^k)$ satisfies \eqref{eq:split_an} and there exists $j\in\{1,\dots, J\}$ such that $i_j\neq 1$. Assume for simplicity that  $i_{1}>1$. 
    Then a test function $\varphi\in \mathcal T(a,X,D)$ can be decomposed as $J$ maps  $\varphi_1,\dots,\varphi_J$, each  valued  in  $\mathcal A_{q_j}$.  But the map $\varphi_1 $  in general does not admit a Lipschitz decomposition into  $i_1$ Lipschitz maps, each in $\mathcal A_{q_1^k},\dots,\mathcal A_{q_{i_1}^k}$.  In particular, the resulting map $\eta^k$ does not belong to $\mathcal T(a^k,X^k,D_{1+\sigma_k})$ since it does not admit the correct decomposition in $D_{1+\sigma_k}$. Therefore we cannot conclude the reverse inequality of \eqref{eq:comp_lsc_A}.  
\end{remark}

\subsection{Lower semicontinuity of the energy associated to $A$}
 
The main goal of the section is showing the following result.

\begin{theorem}\label{prop:quasi_envel}
    The lower semicontinuous  $Q$-integrand $A$ on $\A(\R^2\times \R^{2\times 2})$ defined in \eqref{eq:quasi_envel} gives rise to the following  energy  on $W^{1,p}(D, \A(\R^2))$  for every $p\ge 2$ 
    \[
    \mathcal F(u) = \fint_D A(u(x),\nabla u(x))\,  dx \qquad \forall u\in W^{1,p}(D,\A(\R^2)),
    \]
    which is weakly  lower semicontinuous  in $W^{1,p}(D,\A(\R^2)).$
\end{theorem}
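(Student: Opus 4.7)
The plan is to verify that $A$ satisfies Property~\ref{hp:A} and then invoke Proposition~\ref{prop:iff}, which in this setting provides the equivalence between Property A and weak lower semicontinuity of the associated energy for lower semicontinuous $Q$-integrands of the form of $A$. The growth bound \eqref{eq:bound_0} guarantees that $\mathcal F$ is well-defined on $W^{1,p}(D,\A(\R^2))$ for every $p\ge 2$.

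The core technical step is to prove the quasiconvexity-type inequality: for every $(a,X)\in \A(\R^2\times\R^{2\times 2})$ with maximal decomposition $\sum_{j=1}^J q_j[(a_j,X_j)]$ and every $\varphi\in\mathcal T(a,X,D)$,
\[
A(a,X) \;\le\; \fint_{D} A(\varphi(y),\nabla\varphi(y))\,dy.
\]
By the splitting identity \eqref{eq:A_split}, it is enough to treat the case $J=1$, namely $\varphi\in\tau_{(a+X\cdot)}W^{1,\infty}_0(D,\mathcal A_Q)$. Fix $\delta>0$ and apply the piecewise affine approximation Lemma~\ref{lem:polyapprox} to obtain $\tilde\varphi$ which agrees with $\varphi$ outside a set $N_\delta\subset D$ of measure at most $\delta$, is affine on a finite disjoint collection of cells $\{D_i\}\subset D\setminus N_\delta$, and satisfies
\[
\sum_i |D_i|\, A(\tilde a_i,\tilde X_i) \;\le\; \int_{D} A(\varphi,\nabla\varphi)\,dy + C\delta,
\]
where $(\tilde a_i,\tilde X_i)$ denotes the $Q$-point associated to the affine map $\tilde\varphi|_{D_i}$.

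On each $D_i$, by the very definition \eqref{eq:quasi_envel} of $A$ together with the rescaling invariance of Remark~\ref{rmk:nidepend}, pick a test function $\Phi_i\in\mathcal T(\tilde a_i,\tilde X_i,D_i)$ with $\fint_{D_i}\bar\psi(\nabla\Phi_i)\le A(\tilde a_i,\tilde X_i)+\delta$. Glue the $\Phi_i$ into a global competitor $\Phi$: since each $\Phi_i$ matches the affine boundary data of $\tilde\varphi$ on $\partial D_i$, adjacent cells fit consistently along their common edges; on $N_\delta$ extend by $\tilde\varphi$, with Lemma~\ref{lem:interp} used on thin transition layers between cells and $N_\delta$, and the quadratic growth \eqref{eq:bound_0} combined with the Lipschitz estimate \eqref{eq:Lip_h} bounding the layer contribution by $O(\delta)$. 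Because $\tilde\varphi$ agrees with the affine map $a+X\cdot$ near $\partial D$, the global map $\Phi$ lies in $\mathcal T(a,X,D)$, so that
\[
A(a,X) \;\le\; \fint_D \bar\psi(\nabla \Phi) \;\le\; \fint_D A(\varphi,\nabla\varphi) + C\delta.
\]
Sending $\delta\to 0$ yields Property A for $A$, and Proposition~\ref{prop:iff} closes the proof.

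The main obstacle is the possible discontinuity of $A$ at points where the maximal multiplicity collapses, as discussed in Remark~\ref{rmk:A_not_C0}: this is exactly what prevents a direct application of the continuous-integrand theory of \cite{DeLFocSpa}. The remedy built into the strategy above is that the piecewise affine pieces $(\tilde a_i,\tilde X_i)$ produced by Lemma~\ref{lem:polyapprox} and the boundary interpolation supplied by Lemma~\ref{lem:interp} can be arranged to remain within a single maximal-multiplicity class (so that points $a_j$ with $j\neq i$ stay well separated), and consequently the continuity statement \eqref{cont} of Lemma~\ref{lem:A_cont} is available for the Riemann-sum bookkeeping. This is the delicate point where the specific structure of the envelope $A$ and the separation built into the maximal decomposition interact, and it is what makes the argument close despite $A$ being only lower semicontinuous.
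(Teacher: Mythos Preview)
Your verification of Property~\ref{hp:A} via the piecewise-affine approximation and the gluing of near-optimal competitors is essentially the paper's Step~2: reduce to the full-multiplicity case (Property~\ref{hp:B}), apply Lemma~\ref{lem:polyapprox} to replace $\varphi$ by an almost piecewise-affine $g$ with $\int_D A(g,\nabla g)\ge \int_D A(\varphi,\nabla\varphi)-\delta$, then on each cell $D^\ell$ pick $\Phi_\ell\in\mathcal T(a^\ell,X^\ell,D^\ell)$ with $\fint_{D^\ell}\bar\psi(\nabla\Phi_\ell)\le A(a^\ell,X^\ell)+\delta$, glue using $g$ outside the cells (so the boundary data match because $g$ is already affine on $\partial D^\ell$), and use the definition of $A$ as an infimum. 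Your description of this part is correct, modulo minor imprecisions about what exactly Lemma~\ref{lem:polyapprox} delivers.

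The gap is your appeal to Proposition~\ref{prop:iff}. In the paper's logical order, Proposition~\ref{prop:iff} comes \emph{after} Theorem~\ref{prop:quasi_envel}, and its proof of the direction ``Property~\ref{hp:A} $\Rightarrow$ weak lower semicontinuity'' explicitly reads ``The result follows from Step~1 in the proof of Proposition~\ref{prop:quasi_envel}''. So invoking Proposition~\ref{prop:iff} here is circular: you are outsourcing precisely the half of the argument that the paper carries out as Step~1. That Step~1 is not a formality. It is the blow-up argument of \cite{FoMu} as adapted in \cite{DeLFocSpa}, but with genuine modifications needed because $A$ is only lower semicontinuous: one must check that at a generic blow-up point the rescaled competitors $z^k$ can be built so that their sheets stay in separate tubular neighborhoods (so Property~\ref{hp:A}, not full quasiconvexity, suffices), and one must use Remark~\ref{rmk:dep_A} together with the stratified continuity \eqref{cont} of Lemma~\ref{lem:A_cont} to pass the integrand through the blow-up. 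Your last paragraph correctly identifies the discontinuity of $A$ as the obstacle, but you have not actually supplied the argument that overcomes it; that is what the paper's Step~1 does, and it belongs inside the proof of Theorem~\ref{prop:quasi_envel}, not in a proposition you cite.
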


A crucial step of the proof of Theorem~\ref{prop:quasi_envel} is to show that $A$ satisfies the following property.

\begin{assumptionA}\label{hp:A}
    Let $F$ be a $Q$-integrand on $\A(\R^2\times \R^{2\times 2})$. We say that $F$ satisfies Property \ref{hp:A} if the following holds. Let $(a,X)\in\A(\R^2\times \R^{2\times 2})$ with maximal decomposition $\sum_{j=1}^J q_j[(a_j,X_j)]$ with $a_j\neq a_{j'}$ for $j\neq j'$. In particular, there exists  $r>0$ such that the graph  of  $x\mapsto \sum_{j=1}^J q_j[a_j+X_j\,x]$ coincides in $D_r$ with  the union of $J$ affine planes with multiplicities, at positive distance one from the other in $D_r$.  
    Then, for every $f=\sum_{j=1}^J f_j\in\mathcal T(a,X,D_r)$ such that for every $x\in D_r$ the sets spt$(f_j(x))$ are  pairwise disjoint  for different indices $j$, it holds
    \begin{equation}\tag{A} \label{eq:weak-qcvx2}
        F(a,X)\le \fint_{D_r} F(f,\nabla f).
    \end{equation} 
\end{assumptionA}

In the proof of Theorem~\ref{prop:quasi_envel} we will actually show that the integrand $A$ defined in \eqref{eq:quasi_envel} enjoys the following slightly stronger property:

\begin{assumptionB}
\label{hp:B}
    Let $A$ be the $Q$-integrand on $\A(\R^2\times \R^{2\times 2})$ defined in \eqref{eq:quasi_envel}. We say that $A$ satisfies Property \ref{hp:B} if the following holds. For every  $(a,X)\in\R^2\times \R^{2\times 2}$ and for every $q\le Q$ and 
    function $f\in\mathcal T(q[(a,X)],D)$,   it holds  
\begin{equation}\tag{B}\label{eq:weak-qcvx}
    A^q(q[(a,X)])\le \fint_{D} A^q(f,\nabla f),
\end{equation}   
where the $q$-integrand $A^q$ is defined in Remark \ref{rmk:split_A}.
\end{assumptionB}
Thanks to Remark~\ref{rmk:split_A}, Property~\ref{hp:B} implies Property~\ref{hp:A}. Indeed, assuming Property~\ref{hp:B} and  reasoning as in Remark~\ref{rmk:nidepend},  for every $\lambda>0$ and  $q\le Q$ it holds  
\[
    A^q(q[(a,X)])\le \fint_{ D_\lambda} A^q(f,\nabla f),\quad \forall f\in\mathcal T(q[(a,X)], D_\lambda).
\]
Then, since  for every $x\in D_r$,  spt$(f_j(x))$ are  disjoint for different indices $j$, by Remark~\ref{rmk:split_A}  it holds  $A(f(x),\nabla f(x))=\sum_{j=1}^J A^{q_j}(f_j(x),\nabla f_j(x)) $ and the same holds for $A(a,X)$. Thus, applying \eqref{eq:weak-qcvx} to each $A^{q_j}$ we get \eqref{eq:weak-qcvx2}.

\begin{proof}[Proof of Theorem~\ref{prop:quasi_envel}.]
We divide the proof in two steps.

\noindent\textit{Step 1:} 
We show that $\mathcal F$ is weakly  lower semicontinuous  in $W^{1,p}(D,\A(\R^2))$,  assuming  that  the integrand $A$ satisfies Property~\ref{hp:A}. 
The proof of this step   essentially follows the proof of \cite[Theorem 0.2]{DeLFocSpa}, specifically the part showing the sufficiency of quasiconvexity for lower semicontinuity of the energy, and is based on the blow-up procedure introduced in \cite{FoMu}. We recall briefly the main steps of the proof, highlighting the main differences with the reference \cite{DeLFocSpa}.

In order to show the lower semicontinuity  of 
the functional $\mathcal F$, we consider $p\in[2,+\infty)$ and  a sequence $v^k$ converging  to $u$ weakly in $W^{1,p}(U,\A)$. Note that, by \eqref{eq:bound_0}, the integrand $A$ satisfies $A(a,X)\le C(1+\|X\|^p)$ for every $p\ge 2$.
By \cite[Lemma~1.5]{DeLFocSpa}, one can replace $v^k$ by $u^k$ such that: the functions $u^k$ are in $W^{1,\infty}(U,\A)$ and  are  converging to $u$ weakly in $W^{1,p}$, there exists $\lim_k \mathcal F(u^k)$
and the functions $u_k$ are equi-integrable, meaning that there exists a  function $\varphi:[0,+\infty]\to [0,+\infty]$ superlinear at infinity such that 
\[
\sup_k\int_{U} \varphi(|\nabla u|^p)<+\infty
\]
To conclude, it is sufficient to show that $\mathcal F(u)\le \lim_k \mathcal F(u_k),$
which is, in turn, implied by showing that 
\begin{equation}\label{eq:density}
    A(u(x),\nabla u(x))\le \frac{d \mu}{d \mathscr L^2}(x)
\end{equation}
holds on a set of full measure $\tilde U$, where $\mu$ is (up to subsequences) the weak-$*$ limit of the measures $A(u_k,\nabla u_k)\mathscr L^2\llcorner U$, see \cite{FoMu}. The set $\tilde U$ is chosen in the reference \cite{DeLFocSpa}   so that some density conditions for $u_k,u$ hold on it. In particular, $u$ is differentiable at every point $x\in\tilde U$. 
The proof of \eqref{eq:density} follows from two claims, as in \cite{DeLFocSpa}.

\noindent \textbf{Claim 1:}
Let $x_0\in\tilde U$ and $(u(x_0),\nabla u(x_0))$ with maximal decomposition
$ \sum_{j=1}^Jq_j [(a_j,X_j)]$. Denote $\delta=\min_{j\neq j'}|a_j-a_{j'}|>0$ and  $\alpha=\min\{ 1, \delta/(8Q|\nabla u|(x_0)\,) \}$.
Then there exist $\rho_k\to  0$ and $\{w^k\}_{k\in\N}\subseteq Lip(D(x_0,\rho_k),\A)$ such that:
\begin{itemize}
\item[(a)]  $w^k$ admits a decomposition  $w^k=\sum_{j=1}^J[w_j^k]$ with $w^k_j \in Lip(D(x_0,\rho_k),\mathcal A_{q_j})$, and for every $x\in D(x_0,\rho_k)$ it holds 
$\G(w^k(x),u(x_0)) \le2 \alpha|\nabla u|(x_0)\, \rho_k$ and $  \G(w^k(x),u(x_0))^2=\sum_{j=1}^J\G(w^k_j(x),q_j[a_j])^2;$
\item[(b)] $\fint_{D(x_0,\rho_k)}\G^p(w_{k},T_{x_0}u)=o(\rho_k^p)$;
\item[(c)] $\lim_{k\to+\infty}\fint_{D(x_0,\alpha\rho_k)}   A\big(w^k,\nabla w^k\big)=\frac{d\mu}{d\mathscr L^2}(x_0)$.
\end{itemize}
Note the difference from the reference \cite{DeLFocSpa} in item (c).
We recall the main steps of the proof.

As in \cite{DeLFocSpa}, we can find radii $\rho_k$ which satisfy the following conditions:
\begin{gather}
\sup_k \fint_{D(x_0,\rho_k)}\varphi(|\nabla u^{k}|^p)<+\infty,\quad 
\fint_{D(x_0,\alpha\rho_k)} A\big(u^k,\nabla u^k\big)\to\frac{d\mu}{d\mathscr L^2}(x_0),\nonumber\\
\fint_{D(x_0,\rho_k)}\G^p(u_{k},u)=o(\rho_k^p)\quad
\text{and}
\quad\fint_{D(x_0,\rho_k)}\G^p(u_{k},T_{x_0}u)=o(\rho_k^p).\label{e:rad3}
\end{gather}
Set $r_k=2\alpha\,\max\{|\nabla u|(x_0),\delta/(8Q)\}\rho_k$ and consider the retraction  $\vartheta_k:\A\to \overline {B}_{r_k}(u(x_0))\subset \A$
constructed in \cite[Lemma 3.7]{DeLSpa11}.
We define $w^k:=\vartheta_k\circ u^k$, and check that the claim holds.

Indeed, (a) follows  since $\vartheta_k$ takes values in $\overline {B}_{r_k}(u(x_0))\subset \A$.
As for (b), the choice of $r_k$ implies that
$\vartheta_k\circ T_{x_0}u=T_{x_0}u$ on $D(x_0,\alpha\rho_k)$ since 
\begin{equation}\label{e:rk}
\G(T_{x_0}u(x),u(x_0))\leq |\nabla u(x_0)|\,|x-x_0|\leq
|\nabla u(x_0)|\,\alpha\rho_k\le\frac{r_k}{2}.
\end{equation}
By this, we reason exactly as in \cite{DeLFocSpa} to show that  $A_k=\big\{w^k\neq u^k\big\}$ 
satisfies $|A_k|=o(\rho_k^2).$ 
We thus show item (c), since   by the definition of $A_k$ it holds 
\begin{multline*}
\left|\fint_{D(x_0,\alpha\rho_k)}A\left(u^k,\nabla u^k\right)-\fint_{D(x_0,\alpha\rho_k)} A\left(w^k,Dw^k\right)\right|\\
\leq\frac 1{\alpha^2\rho_k^{2}}\int_{A_k}\Big( A\left(u^k,\nabla u^k\right)+A\left(w^k,Dw^k\right)\Big)
\leq \frac{C}{\rho_k^{2}}\int_{A_k}\big(1+|Dw^k|^p+|\nabla u^k|^p\big).  
\end{multline*}
By the previous estimate and  the equi-integrability of $\nabla u^k$, $\nabla w^k$ we conclude  using  \cite[Lemma~1.6]{DeLFocSpa}.

Using Claim 1, one can ``blow-up'' the functions 
$w^k$, as shown in the following claim.

\noindent \textbf{Claim 2:} For every $\gamma>0$, there exist $\{z^k\}_{k\in\N}\subseteq Lip(D_\alpha,\A)$, 
with the following properties. For each $k\in\N$, $z^k$ admits a  decomposition $z^k=\sum_{j=1}^J z^k_j$ in Lipschitz functions $z^k_j\in Lip(D_\alpha,\mathcal A_{q_j})$ such that, for every $x\in D_\alpha$, spt$(z^k_j(x))$ are   disjoint  for different indices $j$.  Moreover,  $z^k\in\mathcal T(u(x_0),\nabla u(x_0),D_\alpha)$  and
\begin{equation}\label{eq:conv_zk}
\limsup_{k\to+\infty}\fint_{D_\alpha} 
  A\big(z^k,\nabla  z^k\big)\leq \frac{d\mu}{d\mathscr L^2}(x_0)+\gamma.    
\end{equation}

To show Claim 2, as  in \cite{DeLFocSpa}, one starts with  the functions  $w^k$ of Claim 1. Since they have the same multiplicity of $u$  at $x_0$, we can blow-up.
Let $\zeta^k:=\sum_{j=1}^J\zeta^k_j$ with the maps $\zeta^k_j\in Lip(D_\alpha,\mathcal A_{q_j})$ defined by
$\zeta^k_j:=\tau_{a_j}\rho_k^{-1}\tau_{-a_j}w^k_j(x_0+\rho_k\cdot)$.

Note that for every $j=1,\dots,J$ and  $x\in D_\alpha$, it holds 
\begin{equation}\label{eq:dist_spts}
\G(\zeta^k_j(x),q_j[a_j])\le \tfrac1{\rho_k}\G\big(w^k_j(x_0+\rho_kx) ,q_j[a_j]\big)
\le 2\alpha|\nabla u|(x_0)\le  \frac \delta {4}.    
\end{equation}
In particular,  for every $x\in D_\alpha$,   spt$(\zeta^k_j(x))$ are  disjoint  for different indices $j$.  By a change of variable, one also gets  $\zeta^k_j\to q_j[ a_j+X_j\,\cdot]$ in $L^{p}(D_\alpha,\mathcal A_{q_j})$ and 
\[
\fint_{D_\alpha} A\big(\zeta^k ,\nabla \zeta^k\big)=\fint_{D(x_0,\alpha\rho_k)}  A\Big(\sum_{j=1}^J \tau_{a_j}\rho_k^{-1}\tau_{-a_j}[w^k_j],\nabla w^k \Big).
\]
For  every  $x\in D(x_0,\alpha\rho_k)$, the maximal multiplicities of  $\tau_{a_j}\rho_k^{-1}\tau_{-a_j}[w^k_j( x)]$ and $w^k_j(x)$ coincide. Since spt$(w^k_j(x))$ are disjoint for different $j$, also the maximal multiplicities of $w^k(x)$ and $\sum_{j=1}^J \tau_{a_j}\rho_k^{-1}\tau_{-a_j}[w^k_j(x)]$ coincide for every $x\in D_\alpha$. By Remark~\ref{rmk:dep_A} we   get 
\[
\fint_{D(x_0,\alpha\rho_k)}  A\Big(\sum_{j=1}^J \tau_{a_j}\rho_k^{-1}\tau_{-a_j}[w^k_j],\nabla w^k \Big)=\fint_{D(x_0,\alpha\rho_k)}  A(w^k,\nabla w^k).
\]
Therefore, by item (c) in the previous claim we conclude 
\begin{equation*}
\lim_{k\to+\infty}\fint_{D_\alpha}  A\big(\zeta^k ,\nabla \zeta^k\big)
=\frac{d\mu}{d\mathscr L^2}(x_0).
\end{equation*}
The rest of the proof of Claim 2 follows as in \cite{DeLFocSpa}, and amounts to employ Lemma~\ref{lem:interp} to match the boundary data $T_{x_0} u\llcorner\bd D$. The only difference is that we first define
$z^k_j\in Lip(D_\alpha,\mathcal A_{q_j})$ for $j=1,\dots,J$ interpolating between  $\zeta^k_j\llcorner D_{r\alpha}$ and $q_j[a_j+X_j\,\cdot]\llcorner \bd D_\alpha$,  and then  $z^k=\sum_{j=1}^J z^k_j\in\mathcal T(u(x_0),\nabla u(x_0),D_\alpha)$. Here, the parameter $r\in(0,1)$ is chosen as in \cite{DeLFocSpa}. We now check that  for every $x\in D_\alpha$, spt$(z^k_j(x))$ are  disjoint   for different indices $j$. If  $x\in D_{r\alpha}$, then $z^k_j(x)=\zeta^k_j(x)$ and spt$(\zeta^k_j(x))$ are  disjoint   for different indices $j$. Consider now $x\in  D_\alpha\setminus D_{r\alpha}$. By definition of the interpolation $\Phi$ in \eqref{eq:def_interp}, we can estimate for every $P\in \mathcal A_{q_j}$ 
\begin{multline*}
\G(h^k_j(x),P)\le Lip(\bm\xi^{-1})|\bm\rho\circ \Phi(x)-\bm\xi(P)|=Lip(\bm\xi^{-1})|\bm\rho\circ \Phi(x)-\bm\rho\circ \bm\xi(P)|  \le Lip(\bm\xi^{-1})| \Phi(x)-\bm\xi(P)|\\
\le  Lip(\bm\xi^{-1}) Lip(\bm\xi)\Big(  \tfrac{\alpha-\|x\|_\infty}{(1-r)\alpha} \max_{y\in D_\alpha}\G(q_j[a_j+X_j\,y],P  ) +( 1-\tfrac{\alpha-\|x\|_\infty}{(1-r)\alpha}) \max_{y\in D_{\alpha}}\G(\zeta^k_j(y),P) \Big)\\
\le  Lip(\bm\xi^{-1}) Lip(\bm\xi)\max\Big\{  \max_{y\in D_\alpha}\G(q_j[a_j+X_j\,y],P  ) , \max_{y\in D_{\alpha}}\G(\zeta^k_j(y),P)  \Big\}.
\end{multline*}
Considering  $P=q_j[a_j]$ and up to choose a smaller $\alpha$, depending on $Q$,  we  conclude that 
\[
\G(h^k_j(x),q_j[a_j])\le C(Q) 2\alpha|\nabla u|(x_0)\le \tfrac\delta4.
\]

By Claim 2, we can test  \eqref{eq:weak-qcvx2} with every $z^k$, deducing  
$$
A\big(u(x_0),\nabla  u(x_0)\big)\leq\limsup_{k\to+\infty}\int_{D_1}A \big(z^k,\nabla z^k\big)\leq\frac{d \mu}{d\mathscr L^2}(x_0)+\gamma,
$$
which implies \eqref{eq:density} by letting $\gamma\to 0$ and concludes the proof.

\vspace{2ex}

\noindent\textit{Step 2:} We show that the integrand $A$ defined in \eqref{eq:quasi_envel} satisfies Property~\ref{hp:A}, showing Property~\ref{hp:B}, that is \eqref{eq:weak-qcvx} holds for every $q\le Q$ and every    function $f\in\mathcal T(q[(a,X)],D)$. Since the argument below does not depend on the particular value of $q$ we choose, we fix $q=Q$. 
We fix $(a,X)\in\R^2\times \R^{2\times 2}$ and,   with an abuse of notation,  we denote $(a,X)=Q[(a,X)]\in\A$.

Fix $f\in\mathcal T(a,X,D)$ and $\delta >0$. 
We first note that the energy of $f$ can be approximated by the energy of almost piecewise affine functions. In fact, denoting with $L$ the Lipschitz constant of $f$, by Lemma~\ref{lem:polyapprox}, we find an $L'(L,Q)-$Lipschitz function $g\in\mathcal T(a,X,D)$ for which there exists $K\in\N$ piecewise disjoint cubes $D^\ell \subseteq D$ and points $(a^\ell,X^\ell)\in\A(\R^2\times \R^{2\times 2})$ such that 
$$|D\setminus \bigcup_{\ell=1}^{K}D^\ell|\le \delta,\quad g\llcorner D^\ell=:g^\ell \text{ is affine   and }   g^\ell\in\mathcal T(a^\ell,X^\ell,D^\ell),
$$
where $X^\ell=\nabla g^\ell$, and $g^\ell$ is  the Taylor expansion of $f$ at a differentiability point for $f$ in $D^\ell$. Moreover   
$$
\int_D A(f,\nabla f)\ge  \int_D A(g,\nabla g)-\delta.
$$
We are thus left to show \eqref{eq:weak-qcvx} for the function $g$.  

We denote $D^\ell=x^\ell+r D$ for $x^\ell\in D$, and recall  that   $r\in (0, \delta )$. In particular it holds 
\begin{equation}\label{eq:est_A_0}
    \int_D A(g,\nabla g)
    \ge \sum_{\ell=1}^{K}\int_{D^\ell} A(g,\nabla g)
    = \sum_{\ell=1}^{K}\int_{D^\ell} A(g,X^\ell).
\end{equation}
Since $g^\ell$ is affine,  for every $x\in \R^2$, the point $(g^\ell(x),\nabla g^\ell(x))$ has the same maximal multiplicities of    $(a^\ell,X^\ell)$. Thanks to Remark~\ref{rmk:dep_A} it thus holds $A(g^\ell(x),X^\ell)=A(a^\ell,X^\ell)$ and \eqref{eq:est_A_0} becomes
\begin{equation}\label{eq:est_A_1}
 \int_D A(g,\nabla g)
    \ge \sum_{\ell=1}^{K} |D^\ell|A(a^\ell,X^\ell).
\end{equation}
For every $\ell=1,\dots, K$ we  consider $\varphi^\ell\in\mathcal T(a^\ell,X^\ell,D^\ell)$  such that 
\begin{equation}\label{eq:est_A_2}
A(a^\ell,X^\ell)\ge \fint_{D^\ell} \bar\psi(\nabla \varphi^\ell) - \delta. 
\end{equation}
Note that  $g\llcorner\bd D^\ell=\varphi^\ell\llcorner \bd D^\ell$. Therefore,  the following $Q$-valued map is in $Lip(D,\A)$:
\[
 \varphi :=g\,\chi_{D\setminus \bigcup_{\ell=1}^{K}D^\ell} +
\sum_{\ell=1}^{K} \varphi^\ell \,\chi_{D^\ell}. 
\]
By \eqref{eq:est_A_1} and \eqref{eq:est_A_2} we get
\[
\int_D A(g,\nabla g)\ge\sum_{\ell=1}^{K}\Big(\int_{D^\ell} \bar\psi (\nabla \varphi^\ell) -\delta|D^\ell|\Big) \ge \int_D \bar\psi(\nabla  \varphi) - C\delta,
\]
where the positive constant $C$ depends on $L'$, and thus on $f$.  Since $(a,X)=Q[(a,X)]$ and $\varphi\llcorner\bd D = Q[a + X\,\cdot]\llcorner\bd D$, then   $ \varphi\in\mathcal T(a,X,D)$. Hence we deduce from the previous inequality and the definition of $A$ in defined in \eqref{eq:quasi_envel}, that 
\[
 \int_D A(g,\nabla g)\ge A(a,X)- C\delta.
\]
Since $\delta$ was arbitrary,  we conclude \eqref{eq:weak-qcvx} and thus   the proof.

\end{proof}

An interesting consequence of the previous result is the following characterization. We consider a bounded, open set $U\subseteq \R^2$.

\begin{proposition}\label{prop:iff}
    Assume that a lower semicontinuous integrand $F$  on $\A(\R^2\times \R^{2\times 2})$ satisfies
    \begin{equation}\label{eq:dep_F}
        F(a,X)=F(b,X) \quad
    \mbox{ for all    $(a,X),(b,X)\in\A(\R^2\times \R^{2\times 2})$ with  same maximal multiplicities.}\end{equation}

    Then, if the associated energy $\mathcal F(f)=\int_U F(f,\nabla f)$
    is weakly$-*$ lower semicontinuous in $W^{1,\infty}(U,\A)$, the functional $F$ satisfies  Property~\ref{hp:A}.

    Conversely, if $F$ satisfies Property~\ref{hp:A} and for some  $p\ge 2,$
    \[F(a,X)\le C(1+\|X\|^p),\]
    then the associated energy $\mathcal F$ is    weakly   lower semicontinuous in $W^{1,p}(U,\A)$.
\end{proposition}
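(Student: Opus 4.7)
My plan is to handle the two implications separately: the direct implication via an explicit periodic homogenisation construction, and the converse by observing that Step~1 of the proof of Theorem~\ref{prop:quasi_envel} supplies the needed argument after three cosmetic substitutions.

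For the direct implication, fix $(a,X)\in\A(\R^2\times\R^{2\times 2})$ with maximal decomposition $\sum_{j=1}^J q_j[(a_j,X_j)]$ and a competitor $f=\sum_j f_j\in\mathcal T(a,X,D_r)$ as in Property~\ref{hp:A}. I would test weak-$*$ lower semicontinuity on the domain $U=D_r$ against the affine target $u(x):=\sum_j q_j[a_j+X_j\,x]$. Up to shrinking $r$, the branches of $u$ remain distinct on $D_r$, so $(u(x),\nabla u(x))$ and $(a,X)$ share the same maximal multiplicities and \eqref{eq:dep_F} reduces $\int_{D_r}F(u,\nabla u)$ to $|D_r|\,F(a,X)$. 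As recovery sequence I would take the classical periodic-homogenisation construction: for each $j$, the shifted branch $g_j(y):=\tau_{-a_j-X_j\,y}f_j(y)$ lies in $W^{1,\infty}(D_r,\mathcal A_{q_j})$ and equals $q_j[0]$ on $\bd D_r$, hence extends continuously and $D_r$-periodically to a function $\tilde g_j:\R^2\to\mathcal A_{q_j}$. Setting
\[
v^k(x):=\sum_{j=1}^J\tau_{a_j+X_j\,x}\!\left[\tfrac1k\tilde g_j(kx)\right],
\]
one checks that $v^k\to u$ uniformly with uniformly bounded gradients (so $v^k\wto u$ in $W^{1,\infty}(D_r,\A)$), and that on each sub-cube of side $r/k$ a change of variables identifies $\nabla v^k(x)$ with $\nabla f$ evaluated at the matching point of $D_r$.

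The crux of the argument, and the step I expect to be the main obstacle, is establishing the identity
\[
\int_{D_r}F(v^k(x),\nabla v^k(x))\,dx=\int_{D_r}F(f(y),\nabla f(y))\,dy\qquad\text{for }k\text{ large}.
\]
One needs to check that, for $k$ large, the branches $v^k_j(x)$ cluster around $q_j[a_j]$ and hence remain pairwise separated across different $j$'s, and that within a single $j$ two branches of $v^k_j(x)$ coincide if and only if the corresponding branches of $f_j$ do at the matched point. This forces $(v^k(x),\nabla v^k(x))$ and $(f(y),\nabla f(y))$ to admit maximal decompositions with the same multiplicities and the same $\R^{2\times 2}$-component; invoking \eqref{eq:dep_F} pointwise and summing the sub-cube contributions then yields the claimed identity. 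Weak-$*$ lower semicontinuity of $\mathcal F$ finally delivers $|D_r|\,F(a,X)\le\int_{D_r}F(f,\nabla f)$, i.e.\ \eqref{eq:weak-qcvx2}.

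For the converse, I would re-read Step~1 of the proof of Theorem~\ref{prop:quasi_envel} with $A$ replaced by $F$ throughout. The bound \eqref{eq:bound_0} is replaced by the hypothesis $F(a,X)\le C(1+\|X\|^p)$ and plays the same role in the appeal to \cite[Lemma~1.5]{DeLFocSpa} used to produce an equi-integrable sequence; Remark~\ref{rmk:dep_A}, invoked in Claim~2 to identify the energies of $\zeta^k$ and $w^k$ after the blow-up, is replaced by \eqref{eq:dep_F}; and the final application of Property~\ref{hp:B} becomes an application of Property~\ref{hp:A}. The pairwise disjoint-supports hypothesis demanded by Property~\ref{hp:A} is already granted by Claim~2, where the test map $z^k$ is constructed with branches $z^k_j$ whose supports stay $\delta/4$-separated both in $D_{r\alpha}$ and, via the interpolation estimate based on Lemma~\ref{lem:interp}, in the annulus $D_\alpha\setminus D_{r\alpha}$. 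With these substitutions the blow-up argument yields the pointwise inequality \eqref{eq:density} and the weak lower semicontinuity of $\mathcal F$ in $W^{1,p}(U,\A)$.
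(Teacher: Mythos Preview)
Your approach matches the paper's almost exactly. For the direct implication, both you and the paper use the same periodic-homogenisation construction, and both identify the key point as showing that $(v^k(x),\nabla v^k(x))$ and $(f(y),\nabla f(y))$ have the same maximal multiplicities so that \eqref{eq:dep_F} can be applied pointwise after the change of variables.

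For the converse, your strategy of re-reading Step~1 of Theorem~\ref{prop:quasi_envel} with the three substitutions you list is exactly what the paper does, with one omission. The paper singles out a fourth ingredient that must be supplied for a general $F$: the continuity of $F$ along sequences $(a^k,X^k)\to(a,X)$ sharing the same maximal multiplicities. For the specific integrand $A$, this was established in Lemma~\ref{lem:A_cont} directly from the definition of $A$ as an infimum; for a general lower semicontinuous $F$ satisfying \eqref{eq:dep_F}, it is not automatic and the paper derives it from Property~\ref{hp:A} itself, via an interpolation between the affine maps $u_0$ and $u_k$ (Lemma~\ref{lem:interp}) producing a competitor $\varphi^k\in\mathcal T(a,X,D_{(1+\sigma_k)r})$ to which \eqref{eq:weak-qcvx2} applies. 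Your three substitutions cover the growth bound, \eqref{eq:dep_F}, and Property~\ref{hp:A}, but not this continuity step; without it, the blow-up argument borrowed from \cite{DeLFocSpa} (written for continuous integrands) is not fully justified for a merely lower semicontinuous $F$.
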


\begin{proof}\textit{Weak}$-*$ \textit{lower semicontinuity implies Property~\ref{hp:A}.} We assume that the functional $\mathcal F$  is weakly$-*$ lower semicontinuous in $W^{1,\infty}(U,\A)$. 
We fix  $(a,X)\in \A(\R^2\times \R^{2\times 2})$ with a maximal decomposition $\sum_{j=1}^Jq_j[(a_j,X_j)]$ and $a_j\neq a_i$ for $j\neq i$.
Let $x_0\in U$ and define the affine function $u(x):=\sum_{j=1}^Jq_j[a_j + X_j\,x-x_0))]=\sum_{j=1}^Ju_j(x)$.  We can assume without loss of generality that $x_0=0\in U$. Let $D_r\subseteq U$ for some $r>0$. Note that for $r$ small we can ensure that the graph of $u$ over $D_r$ is the union of $J$ affine planes with multiplicities, at positive distance one from the other. Consider a Lipschitz test function   $w =\sum_{j=1}^J w_j\in \mathcal T(a,X,D_r)$ such that for every $x\in D_r$ the sets spt$(w_j(x))$ are  pairwise disjoint  for different indices $j$, as in the statement of Property~\ref{hp:A}.

Set $z_{j}(y):=\tau_{-(a_j+X_j\,y)}w_j(y)$, so that $z_j\llcorner{\bd D_r}\equiv q_j[0]$, and extend $z_j$ as an $r$-periodic map to the whole $\R^2$. 
We consider $v^k_j(y)=\tau_{a_j+X_j\,y}\, ( k^{-1}z_j(ky))$ and $v^k=\sum_{j=1}^J v^k_j$ for $k\in\N$. 
We replace  $v^k_j$ by $u_j$ outside $D_r$.

Note that $v^k_j\rightharpoonup u_j$ weakly$-*$ in $W^{1,\infty}(D_r,\mathcal A_{q_j})$ as uniformly Lipschitz and converging in $L^\infty$. 
Thus, by the lower semicontinuity of $\mathcal F$ we get 
\begin{equation}\label{eq:char-lsc_0}
    \int_{D_r}F(u,\nabla u)\le \liminf_{k\to+\infty} \int_{D_r} F(v^k,  \nabla v^k )
\end{equation}
Since $u$ is affine,  the maximal multiplicities of $(u,\nabla u)=(u,X)$ are the same as those of $(a,X)$. By \eqref{eq:dep_F} and \eqref{eq:char-lsc_0} we   have 
\begin{equation}\label{eq:char-lsc_1}
|D_r|F(a,X) = \int_{D_r}F(u,\nabla u)\le \liminf_k \int_{D_r} F(v^k, \nabla v^k ).
\end{equation}
We now estimate the right-hand side in \eqref{eq:char-lsc_1}. 
First,  for every $x\in   D_r$, spt$(v^k_j(x))$ are pairwise disjoint for different indices $j$. Then, by definition  the maximal multiplicities of $v^k_j(x/k)$ and $w_j(x)$ coincide for all $x\in D_r.$
Since $\nabla v^k$ is $\frac rk$-periodic in $D_r$ and so are the maximal multiplicities of $v^k$, by  periodicity and  a change of variable we get 
\begin{equation}\label{eq:periodic}
\fint_{D_r} F(v^k, \nabla v^k) = \fint_{ D_{r/k}}  F(v^k, \nabla v^k)\\=\fint_{ D_{r/k}}  F(w(k\,\cdot), \nabla v^k)
=\fint_{D_r}   F\Big(w, \sum_{j=1}^{J}  \tau_{X_j}\nabla z_j  \Big),
\end{equation}
where in the second equality we used \eqref{eq:dep_F}. 
Since   $\tau_{X_j}\nabla z_j = \nabla w_j$ by construction, we  conclude  
\begin{equation*} 
	\int_{D_r} F( v^k, \nabla v^k )=\int_{D_r}   F( w ,  \nabla  w),
\end{equation*}
which, paired with \eqref{eq:char-lsc_1}, yields the conclusion.

\textit{Property~\ref{hp:A} implies weak lower semicontinuity.} 
Assume that the integrand   $F$  satisfies Property~\ref{hp:A}. The result follows from Step 1 in the proof of Proposition~\ref{prop:quasi_envel} once we show that for every sequence $\{(a^k,X^k)\}_{k\in\N}\subseteq\A(\R^2\times \R^{2\times 2})$ with the same maximal multiplicities of $(a,X)$, if $\G((a^k,X^k),(a,X))\to 0$ then  $F(a^k,X^k)\to F(a,X)$. Indeed, this convergence was used in Step 1 in the proof of Proposition~\ref{prop:quasi_envel} to deduce \eqref{eq:density}. Hence we are just left to show this convergence.

Let $\sum_{j=1}^Jq_j[(a_j,X_j)]$, $\sum_{j=1}^Jq_j[(a_j^k,X_j^k)]$ be a maximal decomposition for $(a,X),(a^k,X^k)$ respectively, and let $u_0,u_k$ be the affine functions defined by $\sum_{j=1}^Jq_j[a_j + X_j\,\cdot ]$, $\sum_{j=1}^Jq_j[a_j^k + X_j^k\,\cdot ]$ respectively.
Note that, since \eqref{eq:density} needs to be checked at differentiability points of $u$, we can assume that $a_j\neq a_i$ for $j\neq i$.

By lower semicontinuity of $F$, it suffices to show that 
$$
F(a,X)\ge \limsup_{k\to+\infty}F(a^k,X^k).
$$
To do so, we reason as in Lemma~\ref{lem:A_cont}. 
Note that, for $k$ sufficiently large, there exists $r>0$ such that, for each $j$, the graph over $D_{2r}$ of the affine map $x\mapsto a_j^k+X_j^k x$ is contained in a tubular neighborhood of the graph of $x\mapsto a_j+X_j x$. Moreover, these neighborhoods are pairwise disjoint for distinct indices $j$.

Let $\sigma_k=\G((a,X),(a^k,X^k))$. We   build an interpolating function $h^k$ between $u_0\llcorner\bd D_{(1+\sigma_k)r}$ and $u^k\llcorner\bd D_r$ by Lemma~\ref{lem:interp} which admits a decomposition $h^k=\sum_{j=1}^J h^k_j$ matching the decomposition  of   $u_0,u_k.$ Moreover, the  graph   of each $h^k_j$ is contained in the tubular neighborhood of   the graph of $x\mapsto a_j+X_j x$, and they are thus pairwise disjoint for different $j$.  
Reasoning as for \eqref{eq:lip_interp}, the Lipschitz constant   of $h^k$ is uniformly bounded as $k\to +\infty$. We define on $D_{(1+\sigma_k)r}$ the map $\varphi^k=u_k\chi_{D_r}+h^k \chi_{D_{(1+\sigma_k)r}\setminus D_r}$. Since $\varphi^k\in\mathcal T(a,X,D_{(1+\sigma_k)r})$, using \eqref{eq:weak-qcvx2} we conclude
\[
    F(a,X)\le \fint_{D_{(1+\sigma_k)r}}F(\varphi^k,\varphi^k)= |D_{r}|F(a^k,X^k) +  C\sigma_k
\]
for a positive constant $C$. Sending $k\to+\infty$ we conclude.

\end{proof}

We conclude this section with the following lemma which relates  the quasiconvexity introduced in \cite{DeLFocSpa} and Property~\ref{hp:A} for   integrands satisfying \eqref{eq:dep_F}.

\begin{lemma}\label{lem:iff}
    Consider a $Q$-integrand $F$ on $\A(\R^2\times \R^{2\times 2})$ satisfying \eqref{eq:dep_F}. If $F$ is lower semicontinuous,  quasiconvexity of~$F$ implies  Property \ref{hp:A}. If $F$ is continuous, quasiconvexity and Property \ref{hp:A} coincide. 
\end{lemma}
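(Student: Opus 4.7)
The plan is to prove the two implications separately, both exploiting the fact that \eqref{eq:dep_F} allows us to freely deform the first-coordinate values of an argument of $F$ while keeping the joint maximal multiplicities (with the gradient part fixed) unchanged.

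For the forward direction, I would first invoke quasiconvexity on $D_r$---equivalent to quasiconvexity on the unit cube $D$ via the standard rescaling $h_{j,i}(y) := a_j + X_j y + r^{-1}(f_{j,i}(ry) - a_j - X_j ry)$---applied to the given decomposition $f = \sum_j f_j$, to obtain
\[
F(a,X) \le \fint_{D_r} F\Big( \sum_{j,i}[(a_j, \nabla f_{j,i}(x))] \Big)\, dx.
\]
The remaining task is the pointwise bound $F(\sum_{j,i}[(a_j, \nabla f_{j,i}(x))]) \le F(f(x), \nabla f(x))$ almost everywhere in $D_r$. For this I would interpolate via
\[
p^\varepsilon(x) := \sum_{j,i}\big[\big(a_j + \varepsilon(f_{j,i}(x) - a_j),\, \nabla f_{j,i}(x)\big)\big]
\]
and argue that, for $\varepsilon>0$ sufficiently small, uniformly in $x\in D_r$, the joint maximal multiplicities of $p^\varepsilon(x)$ coincide with those of $p^1(x) = (f(x), \nabla f(x))$: the distinctness of the $a_j$'s rules out cross-block coincidences, while within each $j$-block two perturbed first coordinates coincide if and only if $f_{j,i}(x)=f_{j,i'}(x)$. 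Then \eqref{eq:dep_F} gives $F(p^\varepsilon(x)) = F(f(x), \nabla f(x))$, and since $p^\varepsilon(x)\to p^0(x)=\sum_{j,i}[(a_j, \nabla f_{j,i}(x))]$ as $\varepsilon\to 0^+$, the lower semicontinuity of $F$ closes the argument upon integration.

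For the reverse direction (continuous case), I would mimic the oscillation/rescaling argument from the proof of Proposition \ref{prop:iff}. Setting $z_j := \tau_{-a_j - X_j\cdot}(f_j)$, so that $z_j\llcorner\bd D\equiv q_j[0]$, I would extend $z_j$ periodically to $\R^2$ and define on $D_{1/k}$ the maps
\[
v^k_j(y) := \tau_{a_j + X_j y}\big(k^{-1} z_j(ky)\big), \qquad v^k := \sum_j v^k_j.
\]
Since $ky\in\bd D$ whenever $y\in\bd D_{1/k}$, we have $v^k\in\mathcal T(a,X,D_{1/k})$; for $k$ large the values $v^k_j(y)$ concentrate near $a_j$, ensuring the pointwise disjointness of supports required by Property \ref{hp:A}. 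Applying Property \ref{hp:A} on $D_{1/k}$ and changing variable $x=ky$ gives
\[
F(a,X) \le \fint_D F(v^k(x/k), \nabla v^k(x/k))\, dx.
\]
A direct computation shows $\nabla v^k(x/k) = \sum_{j,i}[\nabla f_{j,i}(x)]$ is independent of $k$, while $v^k(x/k) \to \sum_j q_j[a_j]$ pointwise, so $(v^k(x/k),\nabla v^k(x/k)) \to \sum_{j,i}[(a_j,\nabla f_{j,i}(x))]$ in $\A(\R^2\times\R^{2\times 2})$. Continuity of $F$ together with the uniform Lipschitz bound on the $f_j$ and dominated convergence let us pass to the limit $k\to\infty$, yielding the quasiconvex inequality.

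The most delicate point is the perturbation step in the forward direction: verifying that $p^\varepsilon$ and $p^1$ share joint maximal multiplicities for small $\varepsilon>0$ is precisely what allows \eqref{eq:dep_F} to equate their $F$-values and makes lower semicontinuity produce the inequality in the correct direction. In the reverse direction, the approximants $(v^k(x/k), \nabla v^k(x/k))$ generically have joint maximal multiplicities strictly finer than the limit (owing to the distinguishing perturbations $k^{-1}z_{j,i}(x)$), so continuity is essential to pass to the limit; lower semicontinuity alone would give an inequality in the wrong direction, which is exactly why the full equivalence fails without continuity.
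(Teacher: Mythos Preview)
Your argument is correct. In the forward direction it coincides with the paper's: the paper proves the abstract claim that whenever the maximal multiplicities of $(b,Y)$ refine those of $(a,Y)$ one has $F(a,Y)\le F(b,Y)$ (equality if $F$ is continuous), by picking a sequence $(a^k,Y)\to(a,Y)$ with the multiplicity pattern of $(b,Y)$ and invoking \eqref{eq:dep_F}. Your interpolation $p^\varepsilon(x)$ is exactly such a sequence for the pair $(a,Y)=\sum_{j,i}[(a_j,\nabla f_{j,i}(x))]$ and $(b,Y)=(f(x),\nabla f(x))$, so the two proofs agree in substance.

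For the reverse direction the paper is terse: it only records the pointwise equality $F(w,\nabla w)=F(a,\nabla w)$ for competitors $w$ \emph{with disjoint supports} and declares the two notions equivalent, leaving implicit the passage from Property~\ref{hp:A} (which tests only disjoint-support competitors) to quasiconvexity (which tests all $f_j$). Your rescaling $v^k$ on $D_{1/k}$, borrowed from the proof of Proposition~\ref{prop:iff}, manufactures disjoint-support competitors whose gradient distribution matches any prescribed $f$, and thus fills this step cleanly via continuity and dominated convergence. A shorter route, perhaps what the paper has in mind, is to note that the same pointwise observation (taking $(b,X)$ with all first coordinates distinct) forces a continuous $F$ satisfying \eqref{eq:dep_F} to depend on $X$ alone; one may then simply translate each $f_j$ by a large constant to enforce disjoint supports without changing $\nabla f$, and apply Property~\ref{hp:A} at the translated base point. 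Either way your proof is complete, and your closing remark correctly identifies why lower semicontinuity alone cannot run the reverse argument.
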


\begin{proof}
    Let  $(a,X)\in\A$ and $(a,X)=\sum_{j=1}^J q_j[(a_j,X_j)]$ with $a_j\neq a_i$ for $j\neq i$.
We first note the following. Consider $(b,X)\in\A$  admitting a decomposition of the form
\[
    (b,X )= \sum_{j=1}^{J'} q'_j[(b_j,X_j)], \quad b_j\neq b_i\mbox{ for } j\neq i,
\]
where $J'\ge J$ and (up to a permutation) there exists indices $i_0=0,i_1,\dots,i_{J-1},i_J=J'$ such that  $q'_{i_{j-1}+1}+\dots+q'_{i_j}=q_j$  for all $j=1,\dots, J$. Then, if $F$ is lower semicontinuous it holds  $F(a,X)\le F(b,X)$, while if $F$ is continuous 
 it holds $F(a,X)=F(b,X)$. Indeed, let  $\{(a^k,X)\}_{k\in\N}\subseteq \A$ be a sequence  with the same maximal multiplicities of $(b,X)$ and converging to $(a,X)$. By semicontinuity of $F$ and \eqref{eq:dep_F} we conclude that $F(a,X)\le \liminf_k F(a^k,X)=F(b,X)$. If $F$ is continuous we have instead $F(a,X)=F(b,X).$
Consider then  $w\in Lip(D_r,\A)$  such that for every $x\in D_r$ the sets spt$(w_j(x))$ are  pairwise disjoint  for different indices $j$. If $F$ is lower semicontinous,  for every $x\in D_r $ where $w$ is differentiable it holds $F(w(x),\nabla w(x)) \ge  F(a,\nabla w(x))$,
from which we can conclude that the quasiconvexity  \eqref{eq:quasiconvexity} implies  Property \ref{hp:A}.  If $F$ is continous, $F(w(x),\nabla w(x)) = F(a,\nabla w(x))$ and the two properties are equivalent.
\end{proof}

\subsection{Proof of the main result}
We are now ready to prove Theorem \ref{thm:qcvx_non_poly}.

\begin{proof}[Proof of Theorem~\ref{thm:qcvx_non_poly}] 
We show that the $Q$-integrand $A$   defined   in Theorem~\ref{prop:quasi_envel} is not polyconvex. 
    Recall that 
    the simple  2-vectors $v_i=c_i\bigwedge M(X_i)$, $i=1,2,3 $ introduced in \eqref{eq:def_v}, where $c_1=c_2= 4\delta^{-2}(4-\e^2)^{-1}$ and $c_3=\frac{\e^2\delta^2}2$  satisfy $v_1+v_2+v_3=2\delta^2e_{12}=2\delta^2\bigwedge M(0)$.

    We fix $a\in(\R^2)^Q$  and   $f\in \mathcal T(a,0,D)$ with Lipschitz constant $L$. We recall the notation $T_0,\textbf{T}^H, \textbf{T}^M,\textbf{T}^V$ introduced in Section~\ref{sec:nonapprox}, where we again drop the dependence of $\textbf{T}$ from $f$. Recall that each point $p\in  T_0$ is an integer density point for $\textbf{T}$ admitting a tangent plane.
    We also recall that 
    $w_1,w_2$ are $\e$-horizontal while $w_3$ is $\e$-vertical. Moreover, the 2-vectors $v_i$  are such that $R(P_{v_i})=P_{w_i}$ for $i=1,2,3$, where  $R=diag(1,1,\e,\e)$.

    By the definition of $\psi$  we can   estimate  
    \begin{equation}\label{eq:bha}
     \int_D \bar \psi(\nabla f)  \ge    \M(\textbf{T}\llcorner \{p\in T_0 : T_p T_0 \neq P_{v_i} \text{ for }i=1,2,3\}).        
    \end{equation}
    Indeed, consider a point $p\in T_0$ of density $\theta(p)$. We denote $p=(x,y)\in\R^2\times \R^2$. Then 
    $$f(x)=\theta(p)[y]+z$$ 
    with $y\in\R^2$,  $z\in\mathcal A_{Q-\theta(p)}(\R^2)$ and $d(y,$spt$ z)>0$.  In particular, choosing a ball $B$ centered at $x$ with a small enough radius, we can ensure  
    \[
    d(y,\mbox{spt}z)>0\ge 3(Q-1) L\, \text{diam}(B),
    \]
    so that by \cite[Proposition~1.6]{DeLSpa11} the function $f$ admits in $B$ a decomposition into two $L$-Lipschitz functions $f_1\in Lip(B,\mathcal A_{\theta(p)})$, $f_2\in Lip(B,\mathcal A_{Q-\theta(p)})$ with disjoint graphs. Since there exists the tangent plane to $T_0$ at $p$, it also holds $\nabla f_1(x)=\theta(p) X$ for some $X\in\R^{2\times 2}$.    
    If at such $p$  the tangent plane to ${T}_0$ is   different from  $  P_{v_1},P_{v_2}$, then $\bar        \psi(\nabla f_1(p))=\theta(p)\|\Lambda M(X)\|$, which shows \eqref{eq:bha} since   $\|\Lambda M(X)\|\ge 1$ for every $X\in\R^{2\times 2}.$

    We now claim that  a quantitative  lower bound  can be established for   the right-hand side of  \eqref{eq:bha}.   We denote $\textbf{G}:=(R^{-1})_\sharp \textbf{T}$, which is still a multivalued graph, see \eqref{eq:gauss_graph}, and $G_0:=R^{-1}(T_0)$. We also denote $\textbf{G}^{NS}:=\textbf{G}\llcorner \{p\in G_0 : T_p G_0 \neq P_{w_i} \text{ for }i=1,2,3\}$, and $\textbf{G}^{i}=\textbf{G}\llcorner \{p\in G_0 : T_p G_0  = P_{w_i}\}$ for $i=1,2,3$, which are well-defined as restrictions of a rectifiable current on measurable subsets of the support. Bounding the tangential Jacobian from above by $\e^{-2}$, by the area formula
    \[
     \M(\textbf{G}^{NS})\le \e^{-2}\M(\textbf{T}\llcorner \{p\in T_0 : T_p T_0 \neq P_{v_i} \text{ for }i=1,2,3\})  .
    \]
    Moreover,  since $\textbf{G}=\textbf{G}^{NS}+\textbf{G}^1+\textbf{G}^2+\textbf{G}^3$, by the linearity of the pushforward  it holds $\gamma_{\textbf{G}} =  \gamma_{\textbf{G}^{NS}}+\sum_{i=1}^3\gamma_{\textbf{G}^i}.$ In particular,  since    $e_{12}=\int \omega_P\,d\gamma_{\textbf{G}}(P)$, we deduce that 
    \[
    \sum_{i=1,2}\int \omega_P\cdot e_{34}\,d\gamma_{\textbf{G}^i}(P) = \int \omega_P\cdot e_{43}\,d\gamma_{\textbf{G}^{NS}}(P) + \int \omega_P\cdot e_{43}\,d\gamma_{\textbf{G}^3}(P).
    \]
    Recalling the definition \eqref{eq:def_w}, we have $w_1\cdot e_{34}=w_2\cdot e_{34}=\frac {\e^2}{4}$ and $w_2\cdot e_{43}=\frac {\e^2}{2}$, so that the previous equality reads
    \[
    \frac {\e^2}{4\|w_1\|}\sum_{i=1,2} \M(\textbf{G}^i)=  \frac{\e^2}{2\|w_3\|} \M(\textbf{G}^3)  + \int \omega_P\cdot e_{43}\,d\gamma_{\textbf{G}^{NS}}(P).
    \]
    Since the pushforward of a measure is mass-preserving, we have   $|\gamma_{\textbf{G}^{NS}}|(\widetilde{Gr}(2,4)))= \M(\textbf{G}^{NS})$, and by $|\omega_P\cdot e_{43}|\leq 1$ for $\gamma_{\textbf{G}^{NS}}$-a.e. $P$,  we conclude 
    \begin{equation}\label{eq:est_bar0}
        0\le  \frac{2\|w_1\|}{\|w_3\|}\M(\textbf{G}^3) - \sum_{i=1,2} \M(\textbf{G}^i) + \frac{4\|w_1\|}{\e^2}\M(\textbf{G}^{NS}).
    \end{equation}    
    On the other hand,      since projecting on the $h$-plane the multivalued graph $\textbf{G}$ does not produces cancellations, it holds 
    \begin{equation}\label{eq:est_bar1}
            Q\le \M(\textbf{G})=\sum_{i=1,2,3} \M(\textbf{G}^i)  + \M(\textbf{G}^{NS}).
    \end{equation}
    Summing left-hand sides and right-hand sides of \eqref{eq:est_bar0} and \eqref{eq:est_bar1},  we get 
    $$    \Big( 1+ \frac{2\|w_1\|}{\|w_3\|} \Big)\M(\textbf{G}^3) + \Big(1+\frac{4\|w_1\|}{\e^2}\Big) \M(\textbf{G}^{NS})\ge Q.
    $$
    Since $w_3$ is $\e$-vertical it holds $\M(\textbf{G}^3)\le\M( \textbf{G}^V)$, so that   using Proposition~\ref{prop:lower_bd} we arrive at 
    $$    
      Q\le \frac 1C \Big( 1+ \frac{2\|w_1\|}{\|w_3\|} \Big)\M(\textbf{G}^M) + \Big(1+\frac{4\|w_1\|}{\e^2}\Big)\M(\textbf{G}^{NS})
    $$
    Finally, since   the planes associated to $w_1,w_2,w_3$ are either $\e$-horizontal or $\e$-vertical, it holds $\M(\textbf{G}^M)\le \M(\textbf{G}^{NS})$.
    All in all, we have concluded a quantitative lower bound for \eqref{eq:bha}, which concludes the proof of the claim.

    Thanks to the claim, by the very definition of $A$  we  conclude that  $A(a,0)>0$.
    On the other hand, denoting $\tilde X_i=(X_i\,,\dots, X_i)\in (\R^{2\times 2})^Q$ for $i=1,2,3$, we see that 
    $A(a, \tilde X_i)= 0$, as follows testing the definition \eqref{eq:quasi_envel} with $f=\sum_{j=1}^Q [a_j+X_j\,\cdot]$. This implies that the integrand $A$ is not polyconvex, as $A(a,\cdot )$ is not convex at $0$, concluding the proof.
    \end{proof}

    \begin{remark}
        Referring to the previous proof, we  note that, even if the integrand $A$ is degenerate at every  point $(a,\tilde X_i)\in\A$ with $\tilde X_i=(X_i\,,\dots,X_i)\in (\R^{2\times 2})^Q$ for $i=1,2,3$, we can always modify it to produce a nondegenerate $Q$-integrand $\tilde A $, which is not polyconvex but  whose associated energy is lower semicontinuous. Indeed, for $(a,X)\in \A(\R^2 \times \R^{2\times 2}) $  we could set 
        $\tilde A(a,X):= A(a,X) +\delta \, \|X\|$. This is a nondegenerate  $Q$-integrand (since $\|X\|=\G(\sum_{j=1}^Q[X_j],Q[0])$) which is not polyconvex for $\delta$ small enough, as follows from the fact that  
        $\sum_{i=1}^3  A(a,\tilde X_i)<A(a,0)$. On the other hand, it is immediate to check that $\tilde A$ satisfies Property~\ref{hp:B} thanks to the convexity of the norm.
    \end{remark}

    \subsection{Concluding remarks}
\label{sec:final_rmks}
In this final subsection we provide some observations and  questions that arose during the preparation of the work.

\begin{remark}\label{rmk:anisotropic}
    The envelope \eqref{eq:quasi_envel} provides a nonhomogeneous $Q$-integrand. It is not clear if it is possible to construct  an integrand which depends on the gradient variable only. For all $X\in(\R^{2\times 2})^Q$ one could try to define the space of test functions as $\mathcal T(X,D)=\bigcup_{a\in (\R^2)^Q}\mathcal T(a,X,D) $, but for such a choice it is not clear whether the quasiconvexity of the envelope \eqref{eq:quasi_envel} holds. To be precise,   Step~2 in the proof of Theorem~\ref{prop:quasi_envel} fails, since it is not clear that $\varphi^\ell \llcorner\bd D^\ell = g \llcorner\bd D^\ell$ and   that the global competitor $\varphi$ can be defined.  
    
    Even more interesting would be to strengthen Theorem~\ref{thm:qcvx_non_poly} to define a $1$-integrand $A_Q$ which is not polyconvex and such that the associated $Q$-integrand $\bar A_Q$ is quasiconvex. 
    If this was the case, Theorem~\ref{thm:qcvx_non_poly} would also prove the question posed in \cite[Remark 1.14]{DeRLeiYou}.
    One could imagine to define $A_Q$ as follows. For every $X\in\R^{2\times 2}$
    \begin{equation}\label{eq:def_Atilde}
         A_Q(X) :=  \inf_{q\le Q} \left\lbrace \frac{1}{q}\fint_B \bar \psi(\nabla f) :  f\in \mathcal T (q[X],B)  \right\rbrace.
    \end{equation} 
    It is not difficult to see, following the proof of Theorem~\ref{thm:qcvx_non_poly}, that for every $Q\in\N$ the $1$-integrand $A_Q$ is not polyconvex. 
    Yet,  it is not clear whether the energy functional associated to $\bar A_Q$ is weakly lower semicontinuous\footnote{Note that, if that was true, a standard regularization by convolution would allow to find a smooth $1$-integrand $\psi_Q$, close to $A_Q$ and  not  polyconvex, but with $\bar \psi_Q$ quasiconvex (see \cite[pag. 479]{BurIva}).}, that is the validity of   Theorem~\ref{prop:quasi_envel}. 
    The issue lies again in Step~2 in the proof of Theorem~\ref{prop:quasi_envel}, as we now sketch.
    Let $X\in \R^{2\times 2},Q\in \N$ be   such that  
    \begin{equation}\label{def:qcvx-LIN}
        \inf \left\lbrace \fint_B \bar \psi(\nabla f) :  f\in \mathcal T (Q[X],B)  \right\rbrace <  \inf \left\lbrace \fint_B \bar \psi(\nabla f) :  f\in \mathcal T ((Q-1)[X],B)  \right\rbrace,
    \end{equation}
    i.e., $A_Q(X)<A_{Q-1}(X)$.
    If such $X,Q$ did not exist, by \cite[Theorem 1.8]{DeRLeiYou} the function $\psi$ would be polyconvex.  
    Now, consider a piecewise affine  function $g$, and suppose that at a Lebesgue point $x$ for $\nabla g$ it holds   $\nabla  g= q[X] +  \sum_{j=1}^J q_j[X_j]$, with $q<Q$ and $X_j\neq X$.   We need to approximate the energy $\bar  A_Q(\nabla g(x))=qA_Q(X)+\sum_j q_j A_Q(X_j)$, as we did in \eqref{eq:est_A_2}. 
    Since $A_Q(X)<A_{Q-1}(X)$, a $Q$-valued function $f$ is  needed to  approximate the value  $A_Q(X)$ using \eqref{eq:def_Atilde}. As there is left the term $\sum_j q_j A_Q(X_j)$ to approximate, we need to add to $f$ at least one more value. Therefore, in order to approximate $\bar  A_Q(\nabla g(x))$, we need a function $\varphi$ taking at least $Q+1$ values. However, since in \eqref{eq:def_Atilde} we are only allowed to use $q$-valued maps with $q\leq Q$, $\varphi$ is not an admissible competitor. Hence we cannot conclude the last inequality of the proof of Theorem~\ref{prop:quasi_envel}. 
This argument also highlight that we cannot exclude that for every $1$-integrand $\psi$ the  envelope 
    $$\sup\{\xi\leq \psi:  \mbox{the associated $Q$-integrand $\bar \xi$ is quasiconvex}\}$$
    coincides with the polyconvex envelope of $\psi$. 
\end{remark}

\begin{remark}
    We lastly present a comment on   Property~\ref{hp:A}, which replaces the quasiconvexity property of Definition~\ref{def:q-qcvx} introduced in \cite{DeLFocSpa}. 
    As observed in Remark~\ref{rmk:A_not_C0}, we cannot expect in general the continuity of $A$. Hence the weak lower semicontinuity of the associated energy functional is not equivalent to quasiconvexity of $A$.   
    As shown in Proposition~\ref{prop:iff}, under the assumption \eqref{eq:dep_F}, 
    the weak lower semicontinuity of the energy turns out to be equivalent to Property \ref{hp:A}, which thus seems to be the natural ellipticity condition for the existence of energy minimizers in this context. Moreover, when the integrand $A$ is in addition continuous,  Property~\ref{hp:A} is equivalent to the quasiconvexity of $A$ by Lemma~\ref{lem:iff}, and our results match with those of \cite{DeLFocSpa}.
    \end{remark}

\section{An almost-polyhedral approximation result}\label{sec:piecewise}
The aim of this last section is to establish the following approximation result, which we used in Section~\ref{sec:quasi/poly}. Essentially, we show that the energy associated to the integrand $A$ defined in \eqref{eq:quasi_envel} for any Lipschitz multivalued function $f$ can be recovered through an approximating sequence of uniformly Lipschitz functions that are piecewise affine outside a set of small measure. Actually, the same proof works for every integrand  $F$   satisfying the assumptions of Proposition~\ref{prop:iff} and Property~\ref{hp:A}. Specifically,  Property~\ref{hp:A}   can be used in  \eqref{eq:use_def_A} to conclude the   proof of Lemma~\ref{lem:Linf_appr} below, instead of using the definition of the integrand  $A$.

\begin{lemma}\label{lem:polyapprox}
    Let $(a,X)\in \A(\R^2\times \R^{2\times 2})$.  Consider $f=\sum_{j=1}^J f_j\in\mathcal T(a,X,D_r)$ $L$-Lipschitz, for $r\in(0,1)$ such that the sets spt$(f_j(x))$ are pairwise disjoint for different indices $j$. Then    
    there exist a sequence of $L'$-Lipschitz functions   $\{g^k\}_{k\in\N}\subseteq \mathcal T(a,X,D_r)$, with $L'=L'(L,Q)>0$,  with the following properties.  For every $k\in\N$ there exists $L_k\in \N$ and pairwise disjoint cubes  $\{D^k_\ell\}_{\ell=1}^{L_k}\subseteq D_r$  such that
    \begin{equation}\label{eq:prop_appprox2}
    \begin{split}
        & |D_r\setminus \bigcup_{\ell=1}^{L_k} D^k_\ell|\le \frac 2k , \\
        & g^k\llcorner D^k_\ell =T_z f\llcorner D^k_\ell \; \mbox{ for some point $z\in D^k_\ell$ of differentiability for $f$, \quad   for every  }\ell=1,\dots,L_k.
    \end{split}
\end{equation}
    Moreover, it holds  
    \begin{equation}\label{eq:prop_appprox}
         \lim_{k\to +\infty} \int_{D_r} A(g^k,\nabla g^k) =\int_{D_r}  A(f,\nabla f). 
    \end{equation}
\end{lemma}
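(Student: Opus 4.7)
Proof proposal. The plan is to combine Rademacher's theorem for $Q$-valued Lipschitz maps with a Lusin-type approximation and the interpolation Lemma~\ref{lem:interp}. Since the supports $\text{spt}(f_j(x))$ are pairwise disjoint at every $x\in D_r$, Remark~\ref{rmk:split_A} yields $A(f(x),\nabla f(x))=\sum_{j=1}^J A^{q_j}(f_j(x),\nabla f_j(x))$ at every differentiability point; the construction below will be carried out componentwise on each $f_j$, using a single Lusin set and Vitali cover, and then assembled into $g^k=\sum_j g^k_j$ in a way that preserves the disjointness of supports on each good cube.

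First I would apply Rademacher \cite[Theorem 1.13]{DeLSpa11} to obtain a.e.\ differentiability of each $f_j$, and then Lusin's theorem to produce a compact $K_k\subseteq D_r$ with $|D_r\setminus K_k|<1/(2k)$ on which all $f_j,\nabla f_j$ are continuous. This yields a common uniform modulus of differentiability $\omega_k(\rho)\downarrow 0$ as $\rho\downarrow 0$ satisfying $\G(f_j(x),T_zf_j(x))\le\omega_k(|x-z|)|x-z|$ for every $z\in K_k$, $x\in D_r$ and every $j$. I would fix a threshold $\rho^*_k>0$ so small that $\omega_k(\rho^*_k)\le 1/(16 k^2|D_r|^2)$ and such that, for every $z\in K_k$ and every $x\in D(z,\rho^*_k)$, the affine map $T_zf$ has the same maximal multiplicities as $(f(z),\nabla f(z))$ (possible since, by the continuity of $(f,\nabla f)$ on $K_k$, strict inequalities between the distinct values of the maximal decomposition are preserved under sufficiently small perturbations).

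Next I would apply Vitali's covering theorem to extract finitely many pairwise disjoint cubes $\tilde D^k_\ell=D(z_\ell,\rho_\ell)$ with $z_\ell\in K_k$ and $\rho_\ell<\rho^*_k$ such that $|K_k\setminus\bigcup_\ell\tilde D^k_\ell|<1/(2k)$. Setting $\tau_\ell:=\rho_\ell\,\omega_k(\rho_\ell)^{1/2}$ and $D^k_\ell:=D(z_\ell,\rho_\ell-\tau_\ell)$, I would define, for each $j$, the component $g^k_j$ as $T_{z_\ell}f_j$ on $D^k_\ell$, as $f_j$ on $D_r\setminus\bigcup_\ell\tilde D^k_\ell$, and on each buffer annulus $\tilde D^k_\ell\setminus D^k_\ell$ as the interpolation produced by Lemma~\ref{lem:interp} between $T_{z_\ell}f_j\llcorner\bd D^k_\ell$ and $f_j\llcorner\bd\tilde D^k_\ell$. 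The Lipschitz bound of Lemma~\ref{lem:interp}, combined with the uniform differentiability estimate (which controls $\sup_{\bd D^k_\ell}\G(T_{z_\ell}f_j,f_j((1-\tau_\ell/\rho_\ell)^{-1}\,\cdot\,))$ by $QL\tau_\ell+\omega_k(\rho_\ell)\rho_\ell$), yields a buffer Lipschitz constant of order $C(QL+\omega_k(\rho_\ell)\rho_\ell/\tau_\ell)=C(QL+\omega_k(\rho_\ell)^{1/2})$, uniformly bounded, so $g^k=\sum_j g^k_j$ is globally $L'$-Lipschitz with $L'=L'(L,Q)$. The total buffer measure is $\sum_\ell|\tilde D^k_\ell\setminus D^k_\ell|\le 2\omega_k(\rho^*_k)^{1/2}|D_r|\le 1/(2k)$, so combined with the Lusin estimate one obtains $|D_r\setminus\bigcup_\ell D^k_\ell|\le 2/k$.

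For the energy convergence, on each $D^k_\ell$ the gradient $\nabla g^k\equiv\nabla f(z_\ell)$ is constant and, by the multiplicity-matching from above together with Remark~\ref{rmk:dep_A}, $A(g^k(x),\nabla g^k(x))=A(f(z_\ell),\nabla f(z_\ell))$ for all $x\in D^k_\ell$; on $K_k\cap D^k_\ell$ the uniform continuity of $(f,\nabla f)$ and the continuity of $A$ on points with fixed maximal multiplicities (Lemma~\ref{lem:A_cont}) yield $|A(f(x),\nabla f(x))-A(f(z_\ell),\nabla f(z_\ell))|\le\eta_k$ with $\eta_k\to 0$. Summing over $\ell$ and absorbing the contributions of $D_r\setminus K_k$ and $D_r\setminus\bigcup_\ell D^k_\ell$ by the growth bound \eqref{eq:bound_0} (each $\le C(1+{L'}^2)/k\to 0$) gives \eqref{eq:prop_appprox}. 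The main obstacle I anticipate is reconciling the uniform Lipschitz bound with the measure bound $2/k$: thin buffers are needed to keep the exceptional set small, yet the Lipschitz estimate in Lemma~\ref{lem:interp} worsens as the buffer thickness shrinks; the scaling $\tau_\ell\sim\rho_\ell\,\omega_k(\rho_\ell)^{1/2}$ balances these two requirements, provided the Vitali radius $\rho^*_k$ is chosen only \emph{after} the Lusin modulus $\omega_k$ has been produced.
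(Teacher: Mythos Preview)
There is a genuine gap in your energy-convergence step. You claim that for $x\in K_k\cap D^k_\ell$ the bound $|A(f(x),\nabla f(x))-A(f(z_\ell),\nabla f(z_\ell))|\le\eta_k$ follows from the uniform continuity of $(f,\nabla f)$ on $K_k$ together with the continuity statement \eqref{cont} of Lemma~\ref{lem:A_cont}. But \eqref{cont} requires that $(f(x),\nabla f(x))$ and $(f(z_\ell),\nabla f(z_\ell))$ share the \emph{same} maximal multiplicities, and your Lusin step does not guarantee this: continuity of $(f,\nabla f)$ on $K_k$ keeps distinct values distinct, yet sheets that coincide in value and gradient at $z_\ell$ may separate at a nearby $x\in K_k$, producing a strictly finer maximal decomposition there. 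This is exactly the scenario of Remark~\ref{rmk:A_not_C0}, in which $A$ can jump up, so your two-sided pointwise estimate is unjustified. (A simple repair is to include the scalar measurable function $x\mapsto A(f(x),\nabla f(x))$ in the Lusin step; uniform continuity of that function on $K_k$ then delivers the bound you need without any multiplicity hypothesis.)

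The paper argues the two directions of \eqref{eq:prop_appprox} separately and never appeals to pointwise continuity of $A$. One direction is Fatou plus lower semicontinuity of $A$, after arranging $(g^k,\nabla g^k)\to(f,\nabla f)$ pointwise a.e. For the reverse inequality, on each cube $D'$ the paper builds a competitor $\bar h\in\mathcal T\bigl(f(z_\ell),\nabla f(z_\ell),D'\bigr)$ by taking $f$ itself on a slightly smaller concentric cube and interpolating to the affine boundary datum $T_{z_\ell}f$ on a thin annulus; admissibility of $\bar h$ rests on the fact, secured in the intermediate Lemma~\ref{lem:Linf_appr} via \eqref{eq:DeLSpa_split_f}, that $f$ admits on the enlarged cube a Lipschitz decomposition matching the multiplicities of the maximal decomposition of $(f(z_\ell),\nabla f(z_\ell))$. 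This yields $|D'|\,A\bigl(f(z_\ell),\nabla f(z_\ell)\bigr)\le\int_{D'}A(\bar h,\nabla\bar h)\le\int_{D'}A(f,\nabla f)+\tfrac{C}{k}|D'|$, and summing over the cubes gives $\limsup_k\int A(g^k,\nabla g^k)\le\int A(f,\nabla f)$.
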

In order to prove Lemma \ref{lem:polyapprox}, we first introduce the following notation. Consider an open bounded set $\Omega\subseteq \R^2$ .  For $r>0$, we define an $r$-cubic subdivision of  $\Omega$, if it exists,
any finite union $U$ of disjoint open cubes contained in $ \Omega$ whose side lengths are integer multiples of $r$, and such that $|\Omega\setminus U|=0$. We can then show the following intermediate result,  which is reminiscent of \cite[Lemma  4.2, 4.3]{DanPra}.

\begin{lemma}\label{lem:Linf_appr}
Let $U\subseteq \R^2$ open and bounded, and  $f:U\to \A(\R^2)$ be $L$-Lipschitz.  For every $\delta \in(0,1)$ there exists $r\in (0,\delta)$ and an $r$-cubic subdivision $\{D(z_\ell,r)\}_{\ell=1,\dots,N_\delta }$ of a set  $D^\delta\subseteq U$ such that 
\begin{align}
        &|U\setminus D^\delta |\le \delta,  \label{eq:meas_om}\\
        &D(z_\ell,3r)\subset\joinrel \subset U \label{eq:proper_subset},\quad \text{for every } \ell=1,\dots, N_\delta
\end{align} 
and for  each $\ell=1,\dots, N_\delta $ there exists an affine function $g_\ell:\R^2\to  \A(\R^2)$ such that 
\begin{align}
&\mbox{$g_\ell=T_z f$ \; for some point $z\in D(z_\ell,r)$ of differentiability for $f$ },\label{eq:choice_z}\\[1.5ex]
&\sup_{D( z_\ell,r)}\G(f,g_\ell)\leq \delta  r,\label{eq:linfty1}\\[1.5ex]
&\Big| \{\G(\nabla f, \nabla g_\ell)>\alpha\}\cap D(z_\ell,r)\Big|\le \frac\delta\alpha  r^2, \quad \forall \alpha\in (0,+\infty).\label{eq:approx_meas}
\end{align}
 Moreover, if $\sum_{j=1}^J q_j [(a_j,X_j) ]$ is a maximal decomposition for $(f(z),\nabla f(z))$, then $f$ admits a Lipschitz decomposition in $D(z_\ell,3r)$ of the form 
\begin{equation}\label{eq:DeLSpa_split_f}
    f=\sum_{j=1}^J f_j,\quad \text{ with } f_j\in Lip(D(z_\ell,3r),\mathcal A_{q_j}) \text{ for every }j=1,\dots ,J.
\end{equation}
\end{lemma}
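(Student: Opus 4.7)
The plan is to combine the Rademacher-type differentiability theorem for Lipschitz $Q$-valued maps (see \cite[Ch.~1]{DeLSpa11}) with a Lebesgue differentiation argument for $\nabla f$ and a standard axis-parallel grid, extracting the subdivision at a sufficiently small scale.

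First I would define the good set $G\subseteq U$ as the collection of $z\in U$ satisfying: (i) $f$ is differentiable at $z$, with Taylor polynomial $T_z f$; (ii) $z$ is a Lebesgue point of $\nabla f$, in the sense that $\fint_{D(z,\rho)}\G(\nabla f(x),\nabla f(z))\,dx\to 0$ as $\rho\to 0$; and (iii) the maximal decomposition $(f(z),\nabla f(z))=\sum_{j=1}^{J}q_j[(a_j,X_j)]$ has $a_j\ne a_i$ whenever $j\ne i$. Conditions (i)--(ii) hold a.e., and (iii) holds a.e.\ because the set of points at which two local Lipschitz selections of $f$ cross is negligible. For each $z\in G$ I would define $\rho(z)>0$ as the largest radius for which all three of the following hold simultaneously:
\[
\sup_{D(z,4r)}\G(f,T_z f)\le \tfrac{\delta}{4}\, r,\qquad \fint_{D(z,4r)}\G(\nabla f,\nabla f(z))\le \tfrac{\delta}{32},\qquad 12\sqrt 2\,(Q-1)Lr<\delta_z^\ast,
\]
where $\delta_z^\ast:=\min_{j\ne i}|a_j-a_i|$ (the last inequality being vacuous when $J=1$). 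Each condition is satisfied for small $r$ by (i), (ii), (iii) respectively, so $\rho(z)>0$ on $G$.

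By monotone convergence applied to the nested sets $\{z\in G:\rho(z)\ge \eta\}$ as $\eta\downarrow 0$, there exist $\eta>0$ and $G_\ast\subseteq G$ with $|G\setminus G_\ast|<\delta/3$ and $\rho(z)\ge \eta$ on $G_\ast$. I would then fix $r\in (0,\min(\eta,\delta))$ so small that the total measure of the grid cubes $D(z_\ell,r)$ in the standard axis-parallel grid of $\R^2$ for which $D(z_\ell,3r)\not\subset\joinrel\subset U$ is less than $\delta/3$. Let $\{D(z_\ell,r)\}_{\ell=1}^{N_\delta}$ be the sub-collection of grid cubes satisfying both $D(z_\ell,3r)\subset\joinrel\subset U$ and $D(z_\ell,r)\cap G_\ast\ne\emptyset$; for each such $\ell$, pick $z=z(\ell)\in D(z_\ell,r)\cap G_\ast$ and set $g_\ell:=T_z f$.

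Properties \eqref{eq:proper_subset} and \eqref{eq:choice_z} are immediate. The complement $U\setminus\bigcup_\ell D(z_\ell,r)$ decomposes into grid cubes in the boundary strip (contributing less than $\delta/3$) and those disjoint from $G_\ast$, hence contained in $U\setminus G_\ast$ (contributing less than $\delta/3$), giving \eqref{eq:meas_om}. The inclusion $D(z_\ell,r)\subseteq D(z,2r)\subseteq D(z,4r)$ (which holds since $z\in D(z_\ell,r)$) combined with the first estimate in $\rho(z)$ gives \eqref{eq:linfty1}, while the second estimate and Chebyshev's inequality yield
\[
\big|\{\G(\nabla f,\nabla g_\ell)>\alpha\}\cap D(z_\ell,r)\big|\le \frac{1}{\alpha}\int_{D(z,4r)}\G(\nabla f,\nabla f(z))\,dx\le \frac{16r^2}{\alpha}\cdot\frac{\delta}{32}=\frac{\delta r^2}{2\alpha},
\]
which implies \eqref{eq:approx_meas}. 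Lastly, the third inequality in $\rho(z)$ together with the inclusion $D(z_\ell,3r)\subseteq D(z,4r)$ allows the iterated application of \cite[Prop.~1.6]{DeLSpa11} to produce the decomposition \eqref{eq:DeLSpa_split_f} on $D(z_\ell,3r)$. The main obstacle is the simultaneous uniformization of the three scales in the definition of $\rho(z)$: each depends on $z$ in an a priori uncontrolled way, so producing a single grid size $r$ that works globally requires reducing to $G_\ast$ via monotone convergence, while also ensuring that the chosen differentiability point $z\in D(z_\ell,r)\cap G_\ast$ can serve as the base point for estimates on the slightly larger cube $D(z_\ell,3r)$, which is guaranteed by the enlargement factor $4$ in the definition of $\rho(z)$.
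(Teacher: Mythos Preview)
Your approach is essentially identical to the paper's: both define a pointwise good scale (your $\rho(z)$, the paper's $r(z)$) from Rademacher, Lebesgue differentiation for $\nabla f$, and the separation needed for \cite[Prop.~1.6]{DeLSpa11}, uniformize it via the measure of the sublevel set $\{\rho(z)\le r\}$, and extract an axis-parallel grid. Two minor remarks: your condition (iii) is automatic at every differentiability point by the very definition of $Q$-valued differentiability in \cite{DeLSpa11} (sheets with equal value carry equal derivative), so the justification via negligible crossing sets is unnecessary; and $\rho(z)$ should be defined as the supremum of $\rho$ for which the three inequalities hold for \emph{all} $r\in(0,\rho]$, since the first inequality is not monotone in $r$ and you later invoke it at the strictly smaller grid scale $r<\rho(z)$.
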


\begin{proof} 
Fix $\delta>0$. Choose $r_0=r_0(\delta)>0$ and a set $D_0\subset\joinrel\subset U$ such that $D_0$ is the closure of the union of pairwise disjoint translations of the square $(0,r_0)^2$. Assume in addition that each square composing $D_0$ satisfies~\eqref{eq:proper_subset} and that $|U\setminus D_0|\le \delta/2$.
Then, for every $r$ such that $r_0 \in r \N$, there exists a $r$-cubic subdivision of $D_0$ made of $I_0(r)$ cubes. We denote $\{D(z_\ell,r)\}_{\ell\in I_0(r)}$ the elements of  such a  $r$-cubic subdivision of $D_0$. We define   
$I(r)\subseteq I_0(r)
$
as the set  of indices $\ell\in I_0(r)$ such that
there exists an affine function  $g_\ell:\R^2\to \A(\R^2)$ coinciding with $T_z f$ for some $z\in D(z_\ell,r)$ and satisfying
\[
    \sup_{D(z_\ell,3r)} \G(f,g_\ell)\leq \delta  r,\quad 
    \fint_{D(z_\ell,3r)} \G(\nabla f, \nabla g_\ell)\leq \delta.
\]
We then let
\begin{equation}\label{DDelta}
  D^\delta :=\bigcup_{\ell\in I(r)} D(z_\ell,r). 
\end{equation}
As  \eqref{eq:proper_subset}, \eqref{eq:choice_z} and \eqref{eq:linfty1} hold by construction, we are left to prove \eqref{eq:meas_om} and \eqref{eq:approx_meas} for $D^\delta$, and to control the multiplicities of $f,g$. 

We start by choosing radii $r(z)$ depending on the point $z\in \mathring D_0$ in the interior of $D_0$.  First of all, for every $z\in\mathring D_0$ there exists $r(z)$ such that $D(z,4r(z))\subseteq D_0\subseteq  U$ and we can furthermore take $r(z)<\delta$. Then, since $f$ is Lipschitz, by Rademacher's Theorem for multivalued functions  \cite[Theorem ~1.13]{DeLSpa11} for a.e. $z\in  D_0$ the Taylor expansion $T_z f$ of $f$ at the point $z$ is well-defined and  satisfies 
\[
\G(f(y),T_z f(y))=o(|z-y|).
\]
In particular, for a.e. $z\in \mathring D_0$ there exists $r(z)>0$ such that $D(z,4r(z))\subseteq  D_0$ and
\begin{equation}\label{eq:tay_sup}
    \sup_{D(z,4r(z))}\G(f,T_z f) \le\delta  r.
\end{equation}
We set $r(z)=0$ for all $z$ at which $f$ is not differentiable.
Since almost every point $z\in D_0$ is  a Lebesgue point for $\nabla f$,    there exists a possibly smaller $r(z)>0$ such that  
\begin{equation}\label{eq:tay_avg}
    \fint_{D(z,\rho )} \G(\nabla f,\nabla f(z))\leq \frac 9{16}\delta , \qquad \forall\,0<\rho\leq 4r(z).
\end{equation}
Additionally, denoting $\sum_{j=1}^{J(z)} q_j[a_j]$ a maximal decomposition of  $f(z)$, for $r(z)$ small we can ensure that
\begin{equation}\label{eq:DeLSpa_split}
    |a_j-a_{j'} |\ge 3L(Q-1)\,\text{diam}(D(z,4r(z))),\quad \forall j\neq j'.
\end{equation}
We choose $r$    small enough so that the set $A(r):=\big\{ z\in \mathring  D_0 :\, r(z) \leq r\big\}$ satisfies
\begin{equation}\label{Arsmall}
|A(r)|\leq \frac \delta 2.
\end{equation}
This can be done as   $A(r)\searrow \bigcap_{r\ge 0} A(r)$  as $r\to 0$, which has zero measure.

We now prove that, the radius $r$ chosen above satisfies the conclusions of the lemma.

 To this end, we first claim that 
 \begin{equation}\label{useful}
     \mbox{for each $\ell\in I_0(r)$, if $D(z_\ell,r)\not\subseteq A(r)$, then $\ell\in I(r)$.}
 \end{equation}
 Indeed, assume there exists $z\in D(z_\ell,r)\setminus A(r)$ and let $g_\ell:=T_z f.$
In particular,  $r(z)>0$, hence $f$ is differentiable at $z$.  By definition of $A(r)$ and $r(z)$ we get
\[
\begin{split}
\sup_{D(z_\ell,3r)}\G(f,g_\ell) \le   \sup_{D(z,4r(z))}\G(f,T_z f) \le  \delta  r.
\end{split}
\]
Similarly 
\[
\fint_{D(z_\ell,3r)} \G(\nabla f, \nabla g_\ell)\le \frac{16}9  \fint_{D(z,4r(z))} \G(\nabla f, \nabla f(z))\le \delta.
\]
Therefore, $\ell\in I(r)$ and we conclude the proof of \eqref{useful}.

Property \eqref{useful} immediately implies that $I(r)\neq \emptyset$, since for $\delta<|U|$ we have $|D_0|>|A(r)|$ by \eqref{Arsmall}. 
 The validity  of \eqref{eq:meas_om} for our choice of $r$ follows from $|U\setminus D_0|\le \delta/2$ and the following estimate:  
\[
| D_0\setminus D^\delta |\overset{\eqref{DDelta}}{=} 
 \bigg|\bigcup\nolimits_{\ell\in I_0(r)\setminus I(r)} D(z_\ell,r)\bigg|
\overset{\eqref{useful}}{\leq} |A(r)|\overset{\eqref{Arsmall}}{\leq}  \frac \delta 2 .
\]
Then, using \eqref{eq:tay_avg},   for every $\ell\in I(r)$ and $\alpha>0$ we get 
\[
\Big|\{\G(\nabla f,\nabla g_\ell)>\alpha\}\cap D(z_\ell,r)\Big|\le \frac\delta  \alpha r^2,
\]
which shows \eqref{eq:approx_meas}. As noted before, the properties \eqref{eq:proper_subset}, \eqref{eq:choice_z} and \eqref{eq:linfty1}  hold by construction.

Moreover, by \eqref{eq:choice_z}, the function $g_\ell$ is affine and $(g_\ell(x),\nabla g_\ell(x))$ has  the same maximal multiplicities of $(f(z),\nabla f(z))$ at every point   $x\in \R^2$. Denoting $\sum_{j =1}^J  q_j [a_j]$ a maximal decomposition for $f(z)$, by definition of $r(z)$ the property  \eqref{eq:DeLSpa_split} is satisfied  and thus by \cite[Proposition~1.6]{DeLSpa11} the function $f$ admits a  decomposition  in $D(z,4r(z))$ as 
$f= \sum_{j=1}^J f_j ,$ 
with   $f_j$ $L-$Lipschitz  taking value in $\mathcal A_{q_j}$ and with disjoint supports. In particular, since $D(z_\ell,3r)\subseteq D(z,4r(z))$, the same decomposition holds in $D(z_\ell,3r)$, showing that \eqref{eq:DeLSpa_split_f} holds.

\end{proof}

We can now conclude the proof of Lemma~\ref{lem:polyapprox}. 

\begin{proof}[Proof of Lemma~\ref{lem:polyapprox}.]
Let $(a,X)\in\A(\R^2\times \R^{2\times 2})$ and fix an  $L$-Lipschitz function     $f\in \mathcal T(a,X,D_r)$ with Lipschitz decomposition $\sum_{j=1}^J f_j$. Let  $\sum_{j=1}^Jq_\ell[(a_j,X_j)]$ be a maximal decomposition for $(a,X)$. Note that by the assumption on the supports of $f_j$, $a_j\neq a_{i}$ for $j\neq i$. We divide the proof in two steps.

\noindent\textit{Step 1:} We modify the functions   produced by Lemma~\ref{lem:Linf_appr} in order to match the boundary data.  For each $k\in\N$, let  $\{g^k_\ell\}_{\ell=1,\dots, N_k}$ denote the affine functions produced by Lemma \ref{lem:Linf_appr} for the choice $f,\delta=1/k$, with each $g^k_\ell$ defined on an element of an $r_k$-cubic subdivision $\{ D(z_\ell^k,r_k) \}_{\ell=1,\dots,N_k}$ of a set  $D^k\subseteq D_r$, with $r_k\le 1/k$.   
Since $\min_{i\neq j}\min_{x\in D_r}\mbox{dist}(\mbox{spt}f_i(x),\mbox{spt}f_{j}(x))>0$, it holds  $T_z f=\sum_{j=1}^J T_zf_j$ at every differentiability point $z\in D_r$ for $f$.  Thus, by \eqref{eq:choice_z} the affine function $g^k_\ell$ admits a decomposition into $J$ affine functions $g^k_{\ell,j}=T_z f_j\in Lip(\R^2,\mathcal A_{q_j})$. In view of \eqref{eq:linfty1},  for $k$ large enough and for every $x\in D(z^k_\ell,r_k)$, spt$(g^k_{\ell,j}(x))$ are disjoint for different indices $j$. In particular, \eqref{eq:linfty1} implies that for $k$ large enough it holds 
\begin{equation}\label{eq:linfty11}
    \sup_{x\in D(z^k_\ell,r_k)}\G^2(f,g^k_\ell)=\sum_{j=1}^J \sup_{x\in D(z^k_\ell,r_k)} \G^2(f_j,g^k_{\ell,j})\le  (r_k/k)^2.
\end{equation}
For every $j=1,\dots,J$ we define $h^k_{\ell,j}$ as the function  interpolating between $f_j$ on $\bd D(z^k_\ell,r_k )$ and $g^k_{\ell,j}$ on $D(z^k_\ell,(1-1/k)r_k )$ constructed by Lemma~\ref{lem:interp} for the choice $\sigma=\tfrac1{k-1},$ $r=(1-\tfrac1k)r_k$.
In order to estimate the Lipschitz constant of $h^k_{\ell,j}$,   we first note that for every $x\in\bd D_{r_k}$ and thanks to \eqref{eq:linfty11} it holds 
\begin{multline*}      
\G\Big(f_j(x+z^k_\ell),g^k_{\ell,j}((1-\tfrac1k)x+z^k_\ell)\Big)\\
\le \G\Big(f_j(x+z^k_\ell),f_j((1-\tfrac1k)x+z^k_\ell)\Big)+\G\Big(g^k_{\ell,j}((1-\tfrac1k)x+z^k_\ell),f_j((1-\tfrac1k)x+z^k_\ell)\Big)
\le L\,\frac {r_k}k + \frac {r_k}k.
\end{multline*}
Thus, by   \eqref{eq:Lip_h} we conclude that  $h^k_{\ell,j}$  is $L'-$Lipschitz, for a constant $L'$ not depending on $k.$

We define the following maps on $D_r$: 
\[
 g^k:=\sum_{j=1}^J \left[ f_j\chi_{D\setminus D^k}+\sum_{\ell=1}^{N_k}\Big( h^k_{\ell,j} \chi_{D(z^k_\ell,r_k)\setminus D(z^k_\ell,(1-\frac1k)r_k)}  + g^k_{\ell,j}\chi_{D(z^k_\ell,(1-\frac 1k)r_k)}\Big)\right].
\]  
By definition  the function $g^k$  
coincides with $g^k_\ell$ on  $D(z^k_\ell,(1-\frac1k)r_k )$ and it  has the same trace of $f$ on $\bd D(z^k_\ell,r_k )$, thus it is $L'$-Lipschitz in $D$. Moreover, 
since  $g^k$ admits a Lipschitz decomposition with the same multiplicities as the global decomposition of $f$, it holds    $g^k\in\mathcal T(a,X,D_r).$

By \eqref{eq:meas_om}, and since $N_k\le \frac {1}{r_k^2}$, it holds    
\begin{multline}\label{eq:meas_D}
    \left |D\setminus \bigcup_{\ell=1}^{N_k}   D(z^k_\ell,(1-\tfrac1k)r_k )\right |\le \frac1k+  \left |\bigcup_{\ell=1}^{N_k}D(z^k_\ell, r_k )\setminus D(z^k_\ell,(1-\tfrac1k)r_k )\right | \\\le\frac1k + N_kr_k^2 (1-(1-\tfrac1k)^2)\le  \frac{3}k ,
\end{multline}       
thus  the function  $g^k$ satisfies the properties listed in \eqref{eq:prop_appprox2} for $k$ large enough. 
Lastly, thanks to \eqref{eq:meas_D} and \eqref{eq:tay_sup} we have $g^k\to f$ in measure. Similarly, by \eqref{eq:approx_meas} we conclude that $\nabla g^k\to \nabla f$ in measure.  Passing to a subsequence we also get     $(g^k,\nabla g^k)\to (f,\nabla f)$ pointwise    a.e. in $D_r$.

\noindent\textit{Step 2:} We now show \eqref{eq:prop_appprox} for the functions $g^k$ built in the previous step. By the pointwise convergence a.e. and lower semicontinuity of $A$ we already have that $\liminf_{k\to +\infty} A(g^k,\nabla g^k)\ge A(f,\nabla f)$ outside a set of zero measure. In particular, Fatou's lemma implies
\[
\int_{D_r} A(f,\nabla f)\le \liminf_{k\to +\infty}\int_{D_r} A(g^k,\nabla g^k).
\]
To conclude \eqref{eq:prop_appprox} it is sufficient to show the reverse inequality.
In turn, by the definition of $g^k$, to prove the reverse inequality, it is enough to show that there exists $C>0$ only depending on $f$ such that,  for every $k$ large enough and every $\ell=1,\dots, N_k$ it holds 
\begin{equation}\label{eq:ge_A}
     \int_{D^k_\ell} A(g^k_\ell,\nabla g^k_\ell)\le \int_{D^k_\ell} A(f,\nabla f) +  \frac Ck |D^k_\ell|.
\end{equation}
Indeed, by \eqref{eq:meas_D}, \eqref{eq:bound_0} and the uniform Lipschitz continuity of $g^k,f$, all the remainders terms are negligible as $k\to +\infty$.

The proof of \eqref{eq:ge_A} is then very similar to the proof of Lemma~\ref{lem:A_cont}.
We fix $k\in\N$ and  $\ell\in\{1,\dots,N_k\}$, and denote $g,D'=D(z',r)$ the function $g^k_\ell$ and the cube $D^k_\ell(z^k_\ell,r_k)$ for simplicity. 
We consider $z\in D'$  such that   $ g=T_z f$ as in \eqref{eq:choice_z}.  
Let  $\sum_{j=1}^{\mathcal J} q_j [a_j] $ denote a maximal decomposition for $f(z)$. Since $f$ has a Lipschitz decomposition $\sum_{j=1}^J f_j$, with spt$(f_j(z))$ that are pairwise disjoint for different indices $j$, then $\mathcal J\ge J$. On one hand, since $z$ is a differentiability point for $f$, the point $(f(z),\nabla f(z))$ admits a maximal decomposition with the same multiplicities, i.e. $(f(z),\nabla f(z))=\sum_{j=1}^{\mathcal J} q_j [(a_j,X_j)]$ for some $X_j\in\R^{2\times 2}$. In particular, for every $x\in \R^2$ the   maximal multiplicities of   $g(x)$ and  $f(z)$ are the same,  thus $g\in\mathcal T(f(z),\nabla f(z),D')$.
On the other hand, since $f$ admits a decomposition on $D'$ with the same multiplicities as $g$ by \eqref{eq:DeLSpa_split_f}, we can employ Lemma~\ref{lem:interp} to find a Lipschitz  function $\tilde h$ defined on $D'\setminus D(z', (1-\frac1k) r )$ and interpolating between
\[
g\llcorner\bd D', \text{ and } 
 f\llcorner  D(z', (1-\tfrac1k)r ).
\]
Arguing as in the previous step, the function $\tilde h$ is $L'$-Lipschitz continuous, with $L'$ not depending on $k.$
We then note that    the function $\tilde h$   admits a decomposition into $\mathcal J$ multivalued Lipschitz functions in  $\mathcal A_{q_1},\dots, \mathcal A_{q_\mathcal J}$ respectively. Hence, we can define a map $\bar h\in\mathcal T(f(z),\nabla f(z),D')$ as:
\[
\bar h =  \tilde h\chi_{D'\setminus D(z', (1-\frac1k) r )} +  f\chi_{D(z', (1-\frac1k) r)}.
\]
By the definition of $A$ we conclude
\begin{equation}\label{eq:use_def_A}
\int_{D'}A(g,\nabla g)=|D'|A(f(z),\nabla f(z))\le \int_{D'}A(\bar h,\nabla \bar h) \le   \int_{D'}A(f,\nabla f) + \frac Ck|D'|,        
\end{equation}
where the constant $C>0$ depends on the Lipschitz constants of $f$. This shows \eqref{eq:ge_A} and concludes the proof.
\end{proof}

\printbibliography
\end{document}